\documentclass[a4paper]{amsart}
\usepackage{amsmath,amsthm,amssymb,latexsym,epic,bbm,comment,color}
\usepackage{graphicx,enumerate,stmaryrd}
\usepackage[all,2cell]{xy}
\xyoption{2cell}

\usepackage{tikz-cd}

\newcommand{\Z}{\mathbb Z}
\newcommand{\Zo}{\mathbb Z^{\geq 0}}
\newcommand{\R}{\mathbb R}
\newcommand{\K}{\mathbb K}
\newcommand{\C}{\mathbb C}
\newcommand{\al}{\alpha}

\newcommand{\pp}{\mathsf{p}}
\newcommand{\g}{\mathfrak{g}}
\newcommand{\p}{\mathfrak{p}}

\newcommand{\aaa}{\mathfrak{a}}

\newcommand{\mcA}{\mathcal A}

\newcommand{\mcD}{\mathcal D}
\newcommand{\mcF}{\mathcal F}
\newcommand{\mcG}{\mathcal G}
\newcommand{\mcJ}{\mathcal J}
\newcommand{\mcM}{\mathcal M}
\newcommand{\mcN}{\mathcal N}
\newcommand{\mcO}{\mathcal O}
\newcommand{\mcP}{\mathcal P}
\newcommand{\mcE}{\mathcal E}
\newcommand{\mcU}{\mathcal U}
\newcommand{\mcV}{\mathcal V}
\newcommand{\mcI}{\mathcal I}
\newcommand{\mcS}{\mathcal S}
\newcommand{\mcR}{\mathcal R}

\newcommand{\pa}{\partial}

\newcommand{\T}{T}
\newcommand{\F}{\mathrm{F}}

\newcommand{\gr}{\mathrm{gr}}
\newcommand{\red}{\mathrm{red}}
\newcommand{\dd}{\mathrm{d}} 

\newcommand{\pr}{\mathrm{pr}}

\newcommand{\id}{\mathrm{id}}

\newcommand{\pt}{\mathrm{pt}}


\newcommand{\supp}{\mathrm{supp}}

\newcommand{\ppc}{\pp} 

\newcommand{\rd}[1]{\left[{#1}\right]} 
\newcommand{\Lie}{\operatorname{Lie}}

\newcommand{\Hom}{\operatorname{Hom}}
\newcommand{\Der}{\operatorname{Der}}

\newcommand{\ad}{\operatorname{ad}}


\newtheorem{theorem}{Theorem}
\newtheorem{lemma}[theorem]{Lemma}
\newtheorem{definition}[theorem]{Definition}
\newtheorem{corollary}[theorem]{Corollary}
\newtheorem{proposition}[theorem]{Proposition}

\newtheorem{example}[theorem]{Example}

\newtheorem{remark}[theorem]{Remark}

\begin{document}
\title[Graded covering of a supermanifold]
{Graded covering of a supermanifold}

\dedicatory{Dedicated to the memory of Arkady Onishchik, 1933--2019}

\author{Elizaveta Vishnyakova}

\begin{abstract}
	
We introduce and investigate  the notion of a $\Zo$-graded covering for a supermanifold. More precisely,  Donagi and Witten \cite{Witten Atiyah classes} suggested a  construction of the first obstruction class for splitting of a  supermanifold via differential operators. We prove that an infinite prolongation of this construction satisfies some universal properties and can be seen as a covering of a supermanifold in the category of graded manifolds.

\end{abstract}

\maketitle

\section{Introduction}
Let $M$ be a connected smooth manifold. A {\bf universal covering} of $M$ consists of a connected, simply connected manifold $C$ and a local diffeomorphism $p:C\to M$ with some additional conditions.   A classical example of this situation is the following map: $p:\mathbb R\to S^1$, where $p(t) = e^{it}$. A universal covering $C$ of $M$ is {\bf (a)} unique up to isomorphism and {\bf (b)} any smooth map $f:N\to M$ of a simply connected, connected manifold $N$ to $M$ factors through $p$. Covering spaces are very important for algebraic topology, differential geometry, theory of Lie groups, Riemann surfaces, geometric group theory. More information about covering spaces and precise definitions can be found in \cite[Chapter 5]{Massey}.

There are examples of a covering spaces in algebra. (In algebra the word ``cover'' is preferred.) For instance if $F$ is a family of modules over a certain ring $R$, then an {\bf $F$-cover} of an $R$-module $M$ is a surjective homomorphism $p: C\to M$  with the following properties:
\begin{itemize}
	\item $C$ is in the family $F$;
	\item Any endomorphism $f$ of $C$ commuting with the map $p$, that is $p\circ f= p\circ \id$, is an automorphism. (This implies that $C$ is unique up to isomorphism. An analogue of property {\bf (a)} of a geometric covering.)
	\item Any (surjective) homomorphism from a module $N\in F$ to $M$ factors through $C$ (an analogue of property {\bf (b)} of a geometric covering).
\end{itemize} 

One of the most important examples of such covers are a torsion-free cover and a flat cover. For instance over an integral domain, every module $M$ has a torsion-free cover $C$, see \cite{Ban}. The flat cover conjecture due to Enochs \cite[Page 196, 1981]{Eno1} stats that 
for general rings, every module $M$ has a 
flat cover $C$. This conjecture was resolved positively by Bican, El Bashir and Enochs in \cite[2001]{Eno2}.

In the present paper we suggest a definition of  a {\bf $\Zo$-graded covering for any supermanifold}, which has properties similar to {\bf (a)} and {\bf (b)} of a topological and algebraic cover(ing). In more details, we show that 
\begin{enumerate}
	\item[{\bf (a)}] A $\Zo$-graded covering $\mcP$ is unique up to isomorphism for any supermanifold $\mcM$.
	\item[{\bf (b)}] For any graded manifold $\mcN$, we can factor the morphism $\psi:\mcN\to \mcM$ of $\Z_2$-graded ringed spaces through the covering projection $\pp:\mcP\to \mcM$. 
\end{enumerate}

Further we give two constructions of a $\Zo$-graded covering for any supermanifold. In other words, we prove that for any supermanifold $\mcM$, there exists a $\Zo$-graded covering $\mcP$. The ringed space $\mcP$ is an infinite dimensional graded manifold of the form $\mcP= \varprojlim \mcP_n$, where $\mcP_n$ is a graded manifold of degree $n$. We show that a construction given by Donagi and Witten  is equivalent to a construction of the graded manifold $\mcP_2$. The construction of $\mcP_2$ is related also to constructions of a metric double vector bundle and a skew-symmetric double vector bundle  obtained independently in \cite{JL,Fernando}. A construction of $\mcP_n$ for any $n$ can be deduced from results \cite{Grab,Vish} and was obtained in \cite{RVi}, however the meaning and universal properties of the inverse limit  $\mcP= \varprojlim \mcP_n$ were not understood before.

We investigate the case of a Lie supergroup. 
More precisely, for any Lie supergroup $\mcG$ we define a graded Lie group structure on its $\Zo$-graded covering $\mcP$. Further we show that the Lie superalgebra of  $\mcP$ is an example of a loop algebra constructed by V.~Kac, see also Allison, Berman, Faulkner, Pianzola \cite{Allison} and \cite{Eld} for applications of this construction. For instance the loop algebra of a (Lie) superalgebra was used by several authors to investigate graded-simple (Lie) algebras.

Our results are especially interesting in the complex-analytic (and algebraic) cate\-gory. The reason is the following. According to the Batchelor--Gawedzki Theorem any smooth supermanifold  is (non-canonically) split, that is its structure sheaf is isomorphic to the wedge power of a certain vector bundle. Therefore very often  we can study geometry of a split super\-manifold using geometry of vector bundles. This is not the case in the complex-analytic situation. The study of non-split super\-manifolds was initiated in \cite{Ber,Green}, where the first non-split supermanifold was described. Significant advances in this direction were achieved by A.L.~Onishchik. Interest in this problem arose again after Donagi and Witten's papers \cite{Witten not projected,Witten Atiyah classes}, where they proved that the moduli space of super Riemann surfaces for $g\geq 5$ is non-split (or more generally not projected) and studied other properties of non-split supermanifolds. Other examples of non-split supermanifolds are all (except of few exceptional cases) super-grassmannians and flag supermanifolds.

Our covering projection $\pp:\mcP\to \mcM$ induces an embedding of the corresponding structure sheaves $\pp^*:\mcO_{\mcM} \hookrightarrow \mcO_{\mcP}$. In other words the $\Zo$-covering $\mcP$ keeps the whole information about the (non-split) supermanifold $\mcM$.  Therefore we can study geometry of a non-split supermanifold in the category of graded manifolds using the tools of classical complex geometry due to the fact that geometrically a graded manifold is determined by a family of vector bundles.

\bigskip

\textbf{Acknowledgments:} E.V. was partially  supported by 
Coordenação de Aperfei\c{c}oa\-mento de Pessoal de N\'{\i}vel Superior - Brasil (CAPES) -- Finance Code
001, (Capes-Humboldt Research Fellowship).  
The author thanks Peter Littelmann for his suggestion to investigate the case of a Lie supergroup and a Lie superalgebra.   This helped us to understand that  the limit $\mcP= \varprojlim \mcP_n$ satisfies universal properties and to discover a $\Zo$-graded covering. We also thank Alexey Sharapov, Dimitry Leites and Mikhail Borovoi for useful comments.

\section{Preliminaries}

\subsection{Lie superalgebras and graded Lie superalgebras}
Throughout the paper we work over the field $\mathbb K=\mathbb C$ or $\mathbb R$. More information about supergeometry and Lie superalgebras can be found in \cite{Kac,BLMS,Bern,Leites,Var}. A {\it vector superspace} $V$ is a  $\mathbb Z_2$-graded vector space, this is $V = V_{\bar 0} \oplus V_{\bar 1}$, where $\bar i\in \Z_2=\{\bar 0,\bar 1\}$. Elements of $V_{\bar i}\setminus \{0\}$ are called homogeneous of {\it parity} $\bar i\in \Z_2$. To $v \in V_{\bar i}\setminus \{0\}$ we assign the parity $|v|:=\bar i$.
\begin{definition}
	A Lie superalgebra is a vector superspace 
	$\mathfrak g = \mathfrak g_{\bar 0}\oplus \mathfrak g_{\bar 1}$ with  a $\Z_2$-graded bilinear operation $[\,\,, \,]: \mathfrak g\times \mathfrak g\to \mathfrak g$, this is $[\mathfrak g_{\bar i}, \mathfrak g_{\bar j}]\subset \mathfrak g_{\bar i+\bar j}$, satisfying the following conditions
	\begin{align*}
	[x,y] &=-(-1)^{|x||y|}[y,x],\\
	[x, [y, z]] &= [[x, y], z] +  (-1)^{|x||y|} [y, [x, z]] ,
	\end{align*}
	where $x$, $y$, and $z$ are homogeneous elements. 
\end{definition}

We will consider Lie superalgebras with a parity-matched  $\Z$-grading, that is $\g = \bigoplus\limits_{n\in \Z} \g_n$, where $\g_{\bar 0} =  \bigoplus\limits_{n=2q}\g_q$ and  $\g_{\bar 1} =  \bigoplus\limits_{n=2q+1}\g_q$.

\begin{definition}
	Let $\g$ be an $\Z$-graded vector space (or a Lie superalgebra). Then we put
	$$
	\supp(\g)=\{n\in \Z\,\,|\,\, \g_{n}\ne \{0\} \}.
	$$
\end{definition}

In this paper most of the time we will work with non-negatively $\mathbb Z$-graded Lie superalgebras. That is $\Z$-graded Lie superalgebras with 
$$
\supp(\g) =\Zo:= \{0,1,\ldots, n,\ldots\}.
$$

\subsection{Supermanifolds, graded manifolds and graded supermanifolds}

\subsubsection{Supermanifolds}\label{sec supermanifolds def}

We consider a smooth or complex-analytic supermanifold in the sense of Berezin and Leites \cite{Bern,Leites}, see also \cite{BLMS}. Thus, a {\it superdomain in $\mathbb{C}^{n|m}$} is a ringed space $(U,\mathcal F\otimes \bigwedge (\mathbb{C}^{m})^*)$, where $U$ is an open set in $\mathbb{C}^{n}$ and $\mathcal F$ is the sheaf of smooth or holomorphic functions on $U$.   A supermanifold
$\mcM = (\mcM_0,\mathcal{O}_{\mcM})$ of dimension $n|m$ is a locally $\mathbb{Z}_2$-graded
ringed space that is locally isomorphic to a super\-domain in
$\mathbb{C}^{n|m}$. Here the underlying space $\mcM_0$ is a complex-analytic or smooth manifold.  The dimension $n$ of the underlying manifold $\mcM_0$ is called {\it even dimension of $\mcM$}, while $m$ is called {\it odd dimension of $\mcM$}. A
{\it morphism} $F:\mcM \to \mathcal{M}'$ of two
supermanifolds is a morphism between $\mathbb{Z}_2$-graded locally ringed spaces, this is, a pair $F = (F_{0},F^*)$, where $F_{0}:\mcM_0\to \mcM'_0$ is a holomorphic mapping and $F^*: \mathcal{O}_{\mcM'}\to (F_{0})_*(\mathcal{O}_{\mathcal M})$ is a homomorphism of sheaves of
$\mathbb{Z}_2$-graded locally ringed spaces. A morphism $F:\mcM \to \mcM'$ is called an {\it isomorphism} if there exists a morphism $G: \mcM' \to \mcM$ such that $G\circ F= \id_{\mcM}$ and $F\circ G= \id_{\mcM'}$. 
A supermanifold $\mcM$  is called {\it split}, if its structure sheaf is isomorphic to $\bigwedge \mathcal E$, where $\mathcal E$ is a locally free sheaf, this is a sheaf of sections of a  (holomorphic or smooth)  vector bundle $\mathbb E$.  In this case the structure sheaf is $\Z$-graded, not only $\Z_2$-graded.  According to the Batchelor--Gawedzki Theorem any smooth supermanifold  is  split. This is not true in the complex-analytic case, see \cite{Gaw, Ber,Green}.

Let again $\mathcal M=(\mathcal M_0,\mathcal O)$ be a supermanifold. (Sometimes we will omit $\mcM$ in $\mathcal O_{\mcM}$ if the meaning of $\mathcal O$ is clear from the context.) The {\it tangent sheaf} of a supermanifold $\mcM$ is the sheaf $\mathcal T := \mathcal{D}er\mcO$ of
derivations of the structure sheaf $\mcO$. Its global sections are called {\it smooth or holomorphic vector fields} on $\mcM$. Let $x\in \mcM_0$ and let $\mathfrak m_x$ be the maximal ideal in the local superalgebra $\mathcal O_x$. Then as in the classical geometry we can define the tangent space $T_x(\mcM)$ at the point $x$ as $T_x(\mcM):=(\mathfrak m_x/\mathfrak m^2_x)^*$. The tangent sheaf $\mathcal T$ is a locally free $\mcO$-sheaf. If $\mathcal U$ is a local chart on $\mcM$ with even and odd coordinates $(x_a,\xi_b)$, then $\mathcal T|_{\mathcal U}$ is a free $\mcO$-sheaf with a basis $(\frac{\partial}{\partial x_a}, \frac{\partial}{\partial \xi_b})$.

\subsubsection{The functor $\gr$}\label{sec gr}
 To any supermanifold we can assign a split supermanifold.  Let us briefly recall this construction. Let $\mathcal M=(\mcM_0,\mathcal O)$ be a supermanifold.  Then its structure sheaf has the following filtration
\begin{equation}\label{eq filtration}
\mathcal O = \mathcal J^0 \supset \mathcal J \supset \mathcal J^2 \supset\cdots \supset \mathcal J^p \supset\cdots,
\end{equation}
where $\mathcal J$ is the sheaf of ideals generated by odd elements in $\mathcal O$. We define
$$
\mathrm{gr} (\mathcal M): = (\mcM_0,\mathrm{gr}\mathcal O),\quad \text{where} \quad 
\mathrm{gr}\mathcal O = \bigoplus_{p \geq 0} \mathcal J^p/\mathcal J^{p+1}.
$$
The supermanifold $\mathrm{gr}(\mathcal M)$ is split,  that is its structure sheaf is isomorphic to $\bigwedge \mathcal E$, where $\mathcal E= \mathcal J/\mathcal J^{2}$ is a locally free sheaf. Since any morphism $F:\mcM\to\mcM'$ of supermanifolds preserves the  filtration (\ref{eq filtration}), the morphism $\gr (F):\gr\mcM\to\gr\mcM'$ is defined. Summing up, we defined the functor $\mathrm{gr}$  from the category of supermanifolds to itself.

\subsubsection{Graded manifolds and graded supermanifolds of degree $n$}\label{sec Graded manifolds of type Delta}
Let us start with a notion of a graded manifold  of degree $n$,
see \cite{Roytenberg} for more information.
 Consider the following $\Z$-graded vector superspace $V$ with $\supp (V)\subset \{0,1,\ldots, n\}$:
\begin{equation}\label{eq V}
V= \bigoplus_{i =0}^n V_{i}, \quad V=V_{\bar 0}\oplus V_{\bar 1},\quad V_{\bar 0}= \bigoplus_{i =2q} V_{i}, \quad V_{\bar 1}= \bigoplus_{i =2q+1} V_{i},
\end{equation}
where $V_i$ is a finite-dimensional vector space for any $i\in \Z$. To any element $v$ in  $V_i\setminus \{0\}$ we assign the weight $i\in \Z$ and the parity $|v| = \bar i\in \Z_2$.  Denote by $S^*(V)$ the super-symmetric algebra of $V$. In more details, $S^*(V)$ is the quotient of the tensor algebra of $V$ by the ideal generated by $x\otimes y- (-1)^{|x||y|}y\otimes x$, where $x,y$ are homogeneous elements. If $v = v_1 \cdots v_k\in S^*(V)$ is a product of homogeneous elements $v_i\in V_{q_i}\setminus \{0\}$, then we assign to $v$ the weight $\sum\limits_iq_i\in \Z$ and the parity $\sum\limits_i|q_i|\in \Z_2$. This induces a $\Z$-grading and $\Z_2$-grading in $S^*(V)$.

Let us fix a $\Z$-graded vector superspace $V$ over $\K$ with $\supp (V)\subset \{0,\ldots, n\}$. Consider a $\mathbb Z$-graded ringed space $\mathcal V = (\mathcal V_0,\mathcal O_{\mathcal V})$, where $\mathcal V_0 \subset V^*_0$ is an open set, 
$$
\mathcal O_{\mathcal V}: = \mathcal F_{\mathcal V_0}\otimes_{S^*(V_0)} S^*(V)
$$   
and
$\mathcal F_{\mathcal V_0}$ is the sheaf of smooth or holomorphic  functions on $\mathcal V_0$.  
We call the ringed space $\mathcal V$ a  {\it graded domain of degree $n$ and of dimension $\{n_{i}\}$}, where $n_{i} := \dim V_{i}$, $i=0,\ldots, n$. Further, let us choose a basis  $(x_a)$, $a=1,\ldots, n_0$, in $V_0$ and a basis $(\xi_{b_i}^{i})$, where $b_i=1,\ldots, n_i$, in $V_{i}$ for any $i=1,\ldots, n$. Then the set $(x_a, \xi_{b_i}^{i})$ is by definition a system of local coordinates in the graded domain $\mathcal V$. We assign the weight $0$ and the parity $\bar 0$ to $x_a$ and the weight $i$ and the parity $\bar i$ to $\xi_{b_i}^{i}$. Such coordinates as $(x_a, \xi_{b_i}^{i})$ in $\mathcal V$ are called {\it graded}.

Let $\mcV$ and $\mcV'$ be two graded domains with graded coordinates $(x_a, \xi_{b_i}^{i})$ and $(y_c, \eta_{d_j}^{j})$, respectively. A {\it morphism $\Phi: \mcV\to \mcV'$ of graded domains} is a morphism of the corresponding $\Z$-graded ringed spaces such that $\Phi^*|_{(\mcO_{\mcV'})_0}: (\mcO_{\mcV'})_0 \to (\Phi_0)_*(\mcO_{\mcV})_0$ is local, that is it is a usual morphism of smooth or holomorphic domains. Clearly such a morphism is determined by images of local coordinates $\Phi^*(y_c)$ and  $\Phi^*(\eta_{d_j}^{j})$. Conversely, if we have the following set of functions 
\begin{equation}\label{eq mor of graded domains}
\Phi^*(y_c)\in (\mcO_{\mcV})_0(\mcV_0)\quad  \text{and } \quad \Phi^*(\eta_{d_j}^{j})\in (\mcO_{\mcV})_j(\mcV_0),\quad j>0,
\end{equation}
such that $(\Phi^*(y_1)(u),\ldots,\Phi^*(y_{n_0})(u))\in \mcV'_0$ for any $u\in \mcV_0$, than there exists unique morphism $\Phi:  \mcV\to \mcV'$   of graded domains compatible with (\ref{eq mor of graded domains}). A {\it graded manifold of degree $n$ and of dimension $\{n_{i}\}$, $i=0,\ldots, n$,} is a $\mathbb Z$-graded ringed space $\mathcal N = (\mathcal N_0, \mathcal O_{\mathcal N})$, that is locally isomorphic to a graded domain of degree $n$ and of dimension $\{n_{i}\}$, $i=0,\ldots, n$.  More precisely, we can find an atlas $\{U_i\}$ of $\mcN_0$ and  isomorphisms $\Phi_i :(U_i, \mcO_{\mcN}|_{U_i}) \to \mcV_i$ of $\Z$-graded ringed spaces such that $\Phi_i\circ (\Phi_j)^{-1}: \mcV_j\to \mcV_i$ is a morphism of graded domains. A  {\it morphism of graded manifolds $\Phi=(\Phi_0,\Phi^*):\mcN\to \mcN_1$} is a morphism of the corresponding $\mathbb Z$-graded ringed spaces, which is locally a morphism of graded domains.

Since $\mcN$ is a $\mathbb Z$-graded ringed space, its structure sheaf is $\Z$-graded: $\mcO_{\mcN} = \bigoplus\limits_{q\in \Z} (\mcO_{\mcN})_q$.
Denote by $\mcI_n$ the sheaf of ideals in $\mcO_{\mcN}$ generated by $(\mcO_{\mcN})_q$, where $q\geq n+1$. The $\Z$-graded ringed spaces $\mcN^{(n)}:=(\mcN_0,\mcO_{\mcN}/\mcI_n)$   are locally ringed spaces for any $n\geq 0$. If we have a morphism  $\Phi:\mcN\to \mcN_1$ of graded manifolds, than the induced morphism $\mcN^{(n)}\to \mcN_1^{(n)}$ is local for any $n$.

\begin{remark}\label{rem projection of graded}
	 To any graded manifold of degree $n$ we can assign a graded manifold of degree $0\leq n'<n$. In this case we have a natural morphism $\pr^n_{n'}:\mathcal N\to \mathcal N'$, which is called projection.  A detailed description of this construction can be found for example in \cite[Section 4.1]{Vish}.
\end{remark}

A definition of a graded supermanifold is the following. 
Consider a $\Z$-graded vector superspace with support $\{0,\ldots, n\}$:
$$
V= V_0\oplus V_1\oplus \cdots \oplus V_n,
$$
where $V_i= (V_i)_{\bar 0}\oplus (V_i)_{\bar 1}$ are a superspace. Similarly to above we define a graded superdomain of degree $n$ and a graded supermanifold of degree $n$.    

 \subsection{$\Z_2$-graded morphisms} 
 
 Let $\mcM=(\mcM_0,\mcO_{\mcM})$ be a supermanifold and $\mcN=(\mcN_0,\mcO_{\mcN})$ be a graded manifold (of degree $n$).
 Let us define a $\Z_2$-graded morphism $\psi:\mcN\to \mcM$. To understand the nature of this problem, consider the following example. 
 
 \begin{example}\label{ex sin(y^0+y^2)}
 	Let $\mcM= \K^{1|0}$, $\K=\R$ or $\C$, and $\mcN$ be a graded domain $V_0\oplus V_2$, where $\dim V_i=1$. Denote by $x$ the standard coordinate in $\mcM$ and by  $y^0$, $y^2$  graded coordinates in $\mcN$ with weights $0$ and $2$, respectively. Let us try to define a morphism $\psi: \mcN\to \mcM$ as a morphism of $\Z_2$-graded ringed spaces. (Clearly $\mcN$ is naturally $\Z_2$-graded.) Let we have
 	$$
 	\psi^*(x) = y^0+y^2.
 	$$ 
 	Then it has to be
 	$$
 	\psi^*(\sin x) = \sin (y^0+y^2) = y^0+y^2 - \frac{(y^0+y^2)^3}{3!} + \cdots.
 	$$
 	We see that the image of $\sin x$ has to be an infinite series in $y^2$. However such an infinite series if not an element in the structure sheaf $\mcO_{\mcN}$.  
 \end{example}

 Example \ref{ex sin(y^0+y^2)} shows that if the image $\psi^*(F)$, where $F\in \mcO_{\mcM}$, is determined by  images of local coordinates  we have to consider infinite series in graded coordinates or we need to modify the definition of a morphism $\psi$.

 Let as above $\mathcal I_k$ be the sheaf of ideals in $\mcO_{\mcN}$ generated by all $(\mcO_{\mcN})_q$, where $q\geq k+1$. The ringed spaces $\mcN^{(k)}:= (\mcN_0, {\mcO}_{\mcN}/{\mathcal I}_k)$ are $\Z$-graded (therefore $\Z_2$-graded) and local for any $k\geq 0$.  Note that we have the following natural isomorphism of sheaves of vector spaces
 \begin{equation}\label{eq O/I = O_0+O_1+ ... +O_n}
 {\mcO}_{\mcN}/{\mathcal I}_k \simeq ({\mcO}_{\mcN})_0\oplus ({\mcO}_{\mcN})_1\oplus \cdots \oplus  ({\mcO}_{\mcN})_k.
 \end{equation}
 Within the framework of this paper, it is natural to consider the following definition.

 \begin{definition}\label{def morphism N to M}
 	A morphism of a graded manifold $\mcN$ to a supermanifold $\mcM$ is a compatible family of morphisms  $(\psi^{(k)}: \mcN^{(k)}\to \mcM)$, $k \geq 0$, of $\Z_2$-graded locally ringed spaces.  
 \end{definition}
 
 Let us comment the compatibility of morphisms $\psi^{(k)}$ in Definition \ref{def morphism N to M}. The family of morphisms $(\psi^{(k)}: \mcN^{(k)}\to \mcM)$ is called {\it compatible} if  the following diagrams are commutative for any $k\geq 0$:
 $$
 \begin{tikzcd}
 & \mcO_{\mcN}/\mcI_{k+1}  \arrow[d, "\pr^{k+1}_k"]\\
 \mcO_{\mcM}  \arrow[ur,  "(\psi^{(k+1)})^*"]   \arrow[r, "(\psi^{(k)})^*"'  ]  & \mcO_{\mcN}/\mcI_{k} ,
 \end{tikzcd}
 $$
 where $\pr^{k+1}_k: \mcO_{\mcN}/\mcI_{k+1}  \to \mcO_{\mcN}/\mcI_{k}$ is the natural projection. Note that in this case the base morphism $\psi_0: \mcN_0\to\mcM_0$ is the same for any $k$.

 \begin{theorem}\label{theor Leites analog psi is unique} Let $\mcM$ be a supermanifold, $\mcN$ be a graded manifold and  $(\psi^{(k)}: \mcN^{(k)}\to \mcM)$ be a morphism as in Definition \ref{def morphism N to M}.

 {\bf (1)}	Any morphism  $\psi^{(k)}: \mcN^{(k)}\to \mcM$, $k \geq 0$, of $\Z_2$-graded locally ringed spaces is locally determined by the images of local coordinates. 
 	
 {\bf (2)}	The compatible family $(\psi^{(k)}: \mcN^{(k)}\to \mcM)$,  is locally determined by the images of local coordinates.
 \end{theorem}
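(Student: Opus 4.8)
The plan is to prove both parts by a local computation, so I would first fix a coordinate chart $U\subset\mcM_0$ on which $\mcM$ carries even coordinates $x_1,\dots,x_n$ and odd coordinates $\xi_1,\dots,\xi_m$, with $\mcO_{\mcM}|_U\cong \mcF_U\otimes\bigwedge(\xi_1,\dots,\xi_m)$, together with a preimage chart on $\mcN_0$ equipped with graded coordinates. The crucial feature to keep in mind is that each $\psi^{(k)}$ is a morphism of $\Z_2$-graded (not $\Z$-graded) locally ringed spaces, so the images
$$
g_a:=(\psi^{(k)})^*(x_a)\in(\mcO_{\mcN}/\mcI_k)_{\bar 0},\qquad \gamma_b:=(\psi^{(k)})^*(\xi_b)\in(\mcO_{\mcN}/\mcI_k)_{\bar 1}
$$
are arbitrary elements of the prescribed parity, each possibly mixing several weights (for instance $g_a$ may have nonzero components in weights $0,2,4,\dots$, exactly as $y^0+y^2$ in Example \ref{ex sin(y^0+y^2)}). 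The task of part {\bf (1)} is to show that the value $(\psi^{(k)})^*(F)$ on an arbitrary $F\in\mcO_{\mcM}$ is forced by the $g_a$ and $\gamma_b$.

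The technical heart is a nilpotency statement replacing the ``odd coordinates square to zero'' input of the classical Leites chart theorem. Let $\mcJ_+\subset\mcO_{\mcN}/\mcI_k$ be the ideal generated by all homogeneous elements of strictly positive weight. By weight counting, any product of $k+1$ such generators has weight $\geq k+1$ and hence maps to $0$ modulo $\mcI_k$, so $(\mcJ_+)^{\,k+1}=0$. Thus, although the even part of $\mcO_{\mcN}/\mcI_k$ contains the a priori non-nilpotent generators of positive even weight (such as $y^2$), after truncation by $\mcI_k$ every positive-weight element is nilpotent of order $\leq k+1$. This is precisely what tames the infinite series of Example \ref{ex sin(y^0+y^2)}: modulo $\mcI_k$ the expansion of $\sin(y^0+y^2)$ becomes a polynomial.

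With this in hand I would run the Leites-style argument. Writing $F=\sum_{I}f_I\,\xi^I$ as a finite sum over $I\subseteq\{1,\dots,m\}$ with $f_I\in\mcF_U$, the homomorphism property gives $(\psi^{(k)})^*(F)=\sum_I(\psi^{(k)})^*(f_I)\,\gamma^I$, where $\gamma^I$ is determined by the $\gamma_b$. It remains to pin down $(\psi^{(k)})^*(f)$ for a function $f=f_I$. Decompose $g_a=(g_a)_0+(g_a)_+$ into its weight-$0$ part $(g_a)_0\in\mcF_{\mcN_0}$ and its positive-weight part $(g_a)_+\in\mcJ_+$. Locality of the morphism of $\Z_2$-graded \emph{locally} ringed spaces forces $(g_a)_0=\psi_0^*(x_a)$, so the base map and the substitution point are already fixed. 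Iterating Hadamard's lemma and applying $(\psi^{(k)})^*$, each step produces either a factor $(g_a)_0-c_a$ vanishing at the chosen point or a factor $(g_a)_+\in\mcJ_+$; by $(\mcJ_+)^{\,k+1}=0$ the process terminates and collapses to the finite Taylor expansion
$$
(\psi^{(k)})^*(f)=\sum_{|\alpha|\leq k}\frac{1}{\alpha!}\,\big(\pa^\alpha f\big)\!\big((g_1)_0,\dots,(g_n)_0\big)\,(g_1)_+^{\alpha_1}\cdots(g_n)_+^{\alpha_n}.
$$
Since the right-hand side is built solely from the $g_a$, this establishes part {\bf (1)}. I expect the one genuinely delicate point here to be the termination and well-definedness of this substitution for non-analytic $f$ in the smooth category: one invokes Hadamard only to finite order $k$ (legitimate for smooth $f$) and checks that the remainder lands in $(\mcJ_+)^{\,k+1}=0$, while also verifying that the weight-mixing of $g_a$ does not obstruct the substitution — both reducing cleanly to the nilpotency lemma.

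For part {\bf (2)} I would assemble the levels. The compatibility squares say exactly that $\pr^{k+1}_k\big((\psi^{(k+1)})^*(x_a)\big)=(\psi^{(k)})^*(x_a)$ and likewise for the $\xi_b$, so the coordinate images form coherent sequences, i.e. elements of the inverse limit $\varprojlim_k\mcO_{\mcN}/\mcI_k$ (these are the possibly infinite series such as $\sin(y^0+y^2)$, finite at each truncation). By part {\bf (1)} every $(\psi^{(k)})^*$ is determined by its coordinate images $(g_a^{(k)},\gamma_b^{(k)})$, and these are just the truncations of the single coherent system of images; hence the entire family is determined by that system, which is the assertion of part {\bf (2)}.
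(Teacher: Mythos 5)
Your overall strategy is the right one and matches the paper's in outline (reduce to a superdomain, use Hadamard's lemma, exploit the fact that positive-weight elements become nilpotent modulo $\mcI_k$), and your nilpotency observation $(\mcJ_+)^{k+1}=0$ is correct and is indeed what tames Example \ref{ex sin(y^0+y^2)}. But there is a genuine gap at the step you yourself flag as delicate: the termination claim. After $q$ Hadamard steps the remainder is a sum of products of $q$ factors of the form $g_a-c_a=\bigl((g_a)_0-c_a\bigr)+(g_a)_+$. Expanding, only the terms containing at least $k+1$ factors from $\mcJ_+$ die; a term such as $\bigl((g_1)_0-c_1\bigr)^q\,\psi^*(h)$ contains no positive-weight factor at all and is not zero --- it merely lies in $\mathfrak m_v^{q}$, the $q$-th power of the maximal ideal at the chosen point $v$. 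So the process does not terminate, and your closing assertion that ``the remainder lands in $(\mcJ_+)^{k+1}=0$'' is false: the remainder lands in $\mathfrak m_v^{q-k}$, and $\mathfrak m_v$ strictly contains $\mcJ_+$ because of the non-nilpotent weight-zero deviations. What the argument actually yields is that two morphisms agreeing on coordinates have images of $F$ that agree modulo $\mathfrak m_v^{q}$ for \emph{every} point $v$ and \emph{every} order $q$. To conclude equality one still needs the classical fact that a section of the structure sheaf lying in all powers of the maximal ideal at all points is zero (all derivatives of the smooth coefficient functions vanish everywhere, hence the functions vanish); this is precisely the step the paper supplies by reducing to \cite[Section 2.1.7]{Leites}, and it cannot be replaced by the nilpotency lemma alone.

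Once that missing step is inserted, your claimed closed formula
$(\psi^{(k)})^*(f)=\sum_{|\alpha|\leq k}\frac{1}{\alpha!}\,(\pa^\alpha f)\bigl((g)_0\bigr)\,(g)_+^{\alpha}$
is indeed forced (it coincides with what Theorem \ref{theor Leites analog psi is existence} constructs), and your deduction of part {\bf (2)} from part {\bf (1)} via the compatibility squares is the same one-line reduction the paper makes. So the gap is localized and repairable, but as written the proof of part {\bf (1)} is incomplete at exactly the point where the weight-zero direction, rather than the nilpotent positive-weight direction, has to be controlled.
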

 
 \begin{proof} Clearly the second statement is the consequence of the first one. Let u sprove the first statement. 
 	The idea of the proof is similar to \cite[Section 2.1.7, Proof of Theorem]{Leites}. It is sufficient to assume that $\mcM=\mcU=(\mcU_0,\mcO_{\mcU})$ is a  superdomain with coordinates $(x_a,\xi_b)$. Let us prove that $(\psi^{(k)})^*(x_a)$ and $(\psi^{(k)})^*(\xi_b)$ determines $\psi^{(k)}=(\psi_0,(\psi^{(k)}))^*$ for any $k\geq 0$.

 	Let us choose $u\in \mcU_0$ and  $v\in \mcN_0$ such that $\psi_0(v)=u$. Since $\psi^{(k)}$ is local, we have 
 	$
 	(\psi^{(k)})^*(\mathfrak m_u)\subset \mathfrak m_v,
 	$
 	where $\mathfrak m_o$ is the maximal ideal in the corresponding structure sheaf at the point $o$. 
 	Further, by Hadamard's lemma for supermanifolds for any $F\in \mcO_{\mcM}$ we can find a polynomial $P$ of degree $q$ such that $F-P\in \mathfrak m_u^q$.
 	Therefore, $(\psi^{(k)})^* (F)- (\psi^{(k)})^* (P) \in \mathfrak m_v^q$. Now using (\ref{eq O/I = O_0+O_1+ ... +O_n}) we identify $(\psi^{(k)})^* (F)$ with an element 
 	$$
 	H=H_0+\cdots +H_k\in (\mcO_{\mcN})_0\oplus \cdots \oplus (\mcO_{\mcN})_k, \quad H_i\in (\mcO_{\mcN})_i.
 	$$
 	Denote by $(z^{s}_{i_s},\zeta^t_{j_t})$, where $s$ is even and $t$ is odd, local graded coordinates with weights $s$ and $t$, respectively, in a neighborhood of $v$. Then in this neighborhood $H$ can be regarded as a superfunction in even and odd coordinates $(z^{s}_{i_s},\zeta^t_{j_t})$. Let we have another morphism  $(\tilde\psi^{(k)})^*\ne (\psi^{(k)})^*$ such that 
 	$$
 	(\tilde\psi^{(k)})^*(x_a) = (\psi^{(k)})^*(x_a) ,\quad (\tilde\psi^{(k)})^*(\xi_b) =  (\psi^{(k)})^*(\xi_b).
 	$$ 
 	Then we have
 	$$
 	(\tilde\psi^{(k)})^* (F)\mod \mathfrak m_v^q\,\, \equiv \,\, (\psi^{(k)})^* (P) \mod \mathfrak m_v^q\,\, \equiv \,\, (\psi^{(k)})^* (F)\mod \mathfrak m_v^q
 	$$
 	for any $v$ and any $q$. The result follows from \cite[Section 2.1.7]{Leites}.
 \end{proof}

 \begin{theorem}\label{theor Leites analog psi is existence}
 	Let $(x_a,\xi_b)$ be coordinates in a superdomain $\mcU$ and $\mcV=(\mcV_0,\mcO_{\mcV})$ be a graded domain. Consider a set of functions satisfying the following conditions 
 	\begin{equation}\label{eq x^* xi^*}
 	\begin{split}
 	& (x^*_a)^{(k)} \in  ((\mcO_{\mcV})/\mcI_k)_{\bar 0}, \quad \pr_k^{k+1}((x^*_a)^{(k+1)}) = (x^*_a)^{(k)}, \quad k\geq 0; \\ 
 	&(\xi^*_b)^{(k)} \in ((\mcO_{\mcV})/\mcI_k)_{\bar 1}, \quad \pr_k^{k+1}((\xi^*_b)^{(k+1)}) = (\xi^*_b)^{(k)}, \quad k\geq 0,\\
 	& (x^*_a)^{(k)} (v)\in \mcU_0, \quad \text{for any} \quad v\in \mcV_0,
 	\end{split}
 	\end{equation}
 	where $\pr^{k+1}_k: \mcO_{\mcV}/\mcI_{k+1}  \to \mcO_{\mcV}/\mcI_{k}$ is the natural projection.
 	Then there exists unique compatible family of morphisms $\psi^{(k)}: \mcV^{(k)}\to \mcM$ such that 
 	\begin{equation}\label{eq psi(x)=x^*}
 	(\psi^{(k)})^* (x_a) =  (x^*_a)^{(k)}\quad  \text{and}  \quad (\psi^{(k)})^* (\xi_b) =  (\xi^*_b)^{(k)}. 
 	\end{equation}
 \end{theorem}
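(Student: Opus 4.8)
The plan is to establish existence by an explicit local construction of the pullback maps $(\psi^{(k)})^*$, the essential point being that each $\mcO_{\mcV}/\mcI_k$ is a \emph{nilpotent extension} of the base sheaf $\mcF_{\mcV_0}=(\mcO_{\mcV})_0$. Concretely, let $\mathfrak n\subset\mcO_{\mcV}/\mcI_k$ be the ideal generated by $\bigoplus_{q\geq 1}(\mcO_{\mcV})_q$; since a product of $k+1$ elements of positive weight has weight $\geq k+1$ and hence lies in $\mcI_k$, we have $\mathfrak n^{k+1}=0$. In particular each $(\xi^*_b)^{(k)}$ and the positive-weight part of each $(x^*_a)^{(k)}$ is nilpotent. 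I would first fix the base map: writing $(x^*_a)^{(k)}=g_a+\nu_a$ with $g_a\in\mcF_{\mcV_0}$ the weight-$0$ component and $\nu_a\in\mathfrak n$, the condition $(x^*_a)^{(k)}(v)\in\mcU_0$ says precisely that $\psi_0:=(g_1,\ldots,g_{n_0})$ is an ordinary map $\mcV_0\to\mcU_0$, independent of $k$.

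Next I would define the pullback on even functions. For a smooth or holomorphic $f$ on $\mcU_0$ I set
$$
(\psi^{(k)})^*(f):=\sum_{\alpha}\frac{1}{\alpha!}\,\psi_0^*\!\left(\frac{\partial^{|\alpha|}f}{\partial x^{\alpha}}\right)\nu^{\alpha},\qquad \nu^{\alpha}=\prod_a\nu_a^{\alpha_a},
$$
a finite sum because $\mathfrak n^{k+1}=0$. The cleanest way to see that this is a $\K$-algebra homomorphism $\mcF_{\mcU_0}\to(\mcO_{\mcV}/\mcI_k)_{\even}$ is to factor it: the Taylor-expansion map $f\mapsto\sum_\alpha\tfrac{1}{\alpha!}\psi_0^*(\partial^\alpha f)\,t^\alpha$ into $\mcF_{\mcV_0}[[t_1,\ldots,t_{n_0}]]$ is a homomorphism by the Leibniz rule for $\partial^\gamma(fg)$, and the evaluation $t_a\mapsto\nu_a$ is a homomorphism because the $\nu_a$ are commuting nilpotents (so it factors through the truncation in degree $\leq k$). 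By construction $(\psi^{(k)})^*(x_a)=(x^*_a)^{(k)}$.

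I would then extend to all of $\mcO_{\mcU}$ using the unique expansion $F=\sum_B f_B\,\xi^B$, where $B=\{b_1<\cdots<b_r\}$ runs over subsets of the odd indices, $\xi^B=\xi_{b_1}\cdots\xi_{b_r}$, and $f_B\in\mcF_{\mcU_0}$, by setting
$$
(\psi^{(k)})^*(F):=\sum_{B}(\psi^{(k)})^*(f_B)\,(\xi^*_{b_1})^{(k)}\cdots(\xi^*_{b_r})^{(k)}.
$$
This is a well-defined parity-preserving homomorphism of $\Z_2$-graded algebras: multiplicativity on the even factors was just established, the odd factors $(\xi^*_b)^{(k)}$ anticommute exactly as the $\xi_b$ do, and the decomposition $F=\sum_B f_B\xi^B$ is unique. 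Locality $(\psi^{(k)})^*(\mathfrak m_u)\subset\mathfrak m_v$ for $\psi_0(v)=u$ is immediate since $\mathfrak n\subset\mathfrak m_v$ and $\psi_0$ is an ordinary map. Compatibility of the family follows by applying $\pr^{k+1}_k$ to the displayed formulas: it is an algebra homomorphism carrying level-$(k+1)$ data to level-$k$ data, so the hypotheses $\pr^{k+1}_k((x^*_a)^{(k+1)})=(x^*_a)^{(k)}$ and $\pr^{k+1}_k((\xi^*_b)^{(k+1)})=(\xi^*_b)^{(k)}$ give $\pr^{k+1}_k\circ(\psi^{(k+1)})^*=(\psi^{(k)})^*$, which is exactly the required commuting triangle. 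Since the statement is local, the $\psi^{(k)}$ so obtained are the desired morphisms into $\mcU$; over overlaps of charts of $\mcM$ two such constructions agree by the uniqueness part, Theorem \ref{theor Leites analog psi is unique}, as they carry the same coordinate images.

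The hard part will be the even direction, namely showing that $f\mapsto(\psi^{(k)})^*(f)$ is a genuine, representation-independent algebra homomorphism rather than a formal substitution. This is where Hadamard's lemma and the nilpotency $\mathfrak n^{k+1}=0$ are indispensable: in the smooth category one cannot invoke convergence of the Taylor series, and it is precisely the bound $\supp(\mcO_{\mcV}/\mcI_k)\subset\{0,\ldots,k\}$ that truncates the expansion to a finite, canonical sum and lets the evaluation $t_a\mapsto\nu_a$ be defined on formal power series. Once this homomorphism property is secured, the extension to $F$, locality, the compatibility across $k$, and the comparison with neighbouring charts are all routine.
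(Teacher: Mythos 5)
Your construction is correct and is essentially the paper's own argument: the paper likewise defines $(\psi^{(k)})^*(F)$ by substituting the coordinate images into a Taylor expansion about the weight-$0$ part, with the positive-weight remainder nilpotent modulo $\mcI_k$, and delegates the verification to Leites' classical argument, while uniqueness and compatibility come from the preceding uniqueness theorem. You simply spell out the details the paper cites (the factorization through formal power series and the $\xi^B$-expansion), which is a faithful expansion rather than a different route.
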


 \begin{proof}
 	If there exists such a family of morphisms $(\psi^{(k)})_{k\geq 0}$, it is compatible. Indeed, by (\ref{eq x^* xi^*}) and (\ref{eq psi(x)=x^*}) we have 
 	$$
 	(\psi^{(k)})^*(x_a)=\pr^{k+1}_k \circ(\psi^{(k+1)})^*(x_a),\quad  (\psi^{(k)})^*(\xi_b)=\pr^{k+1}_k \circ(\psi^{(k+1)})^*(\xi_b).
 	$$
 	Therefore, $\pr^{k+1}_k \circ(\psi^{(k+1)})^*= (\psi^{(k)})^*$ since  by Theorem \ref{theor Leites analog psi is unique} a morphism $\mcV^{(k)}\to \mcM$  satisfying (\ref{eq psi(x)=x^*}) is unique.

 	Now we use an argument from \cite[Section 2.1.11, Proof of Theorem]{Leites}. Let us repeat this argument here for completeness.  If $F(x_a,\xi_b)\in \mcO_{\mcU}$ we put 
 	$$
 	(\psi^{(k)})^*(F):= F((\psi^{(k)})^* (x_a), (\psi^{(k)})^*(\xi_b)) \mod \mcI_k,
 	$$
 	and $F((\psi^{(k)})^* (x_a), (\psi^{(k)})^*(\xi_b)) \mod \mcI_k$ we define by Taylor formula. This means that we consider the Taylor series of the function $F(z^0_a+ z'_a, \zeta_b)$, where $z^0_a$ is the part of weight $0$ and $z'_a$ is the rest, and we decompose $F$ meaning that $z'_a$ is a  small variation. Then we replace $z^0_a$ by $\pr_0\circ(\psi^{(k)})^* (x_a)$ and $z'_a $ by $(\psi^{(k)})^* (x_a) - \pr_0\circ(\psi^{(k)})^* (x_a)$, where $\pr_0:\mcO_{\mcV} \to (\mcO_{\mcV})_0$ the natural projection.  Since we consider the result modulo $\mcI_k$ the variation $z'_a$ is nilpotent and the series is well-defined. 
 \end{proof}

\subsection{Lie supergroups, graded Lie groups and super Harish-Chandra pairs}
\subsubsection{Lie supergroups}

A {\it Lie supergroup} is a group object in
the category of supermanifolds.
In other words a Lie supergroup is a supermanifold $\mathcal G=(\mathcal G_0, \mathcal O_{\mathcal G})$ with three morphisms:
$\mu:\mathcal G\times \mathcal G\to \mathcal G$
(the multiplication morphism), $\kappa:\mathcal G\to \mathcal G$ (the inversion morphism),
$\varepsilon:(\mathrm{pt},\K)\to \mathcal G$ (the identity
morphism). Moreover, these morphisms have to satisfy the usual
conditions, modeling the group axioms. The underlying manifold $\mathcal G_0$ of $\mathcal G$
is a smooth or complex-analytic Lie group. As in the theory of Lie groups and Lie algebras, we can define the Lie superalgebra $\mathfrak{g}=\Lie(\mcG)$ of a Lie supergroup $\mathcal G$. Let us remind this construction.

Let $\mcG$ be a Lie supergroup, $e=\varepsilon_0(\pt)$ be the identity element of $\mcG_0$ and $\mathfrak m_e$ be the maximal ideal in the local algebra $(\mathcal O_{\mathcal G})_e$. Then $T_e(\mcG):=(\mathfrak m_e/\mathfrak m^2_e)^*$ is the tangent space of $\mcG$ at $e$. By definition, $\mathfrak{g}:=\Lie(\mcG)$ is the subsuperalgebra of the Lie superalgebra of smooth or holomorphic vector fields on
$\mathcal G$ consisting of all right invariant
vector fields on $\mathcal G$. Recall that a vector field $Y$ on $\mcG$ is called {\it right invariant} if $(Y \times \id) \circ \mu^* = \mu^* \circ Y$. Any right invariant vector field $Y\in \mathfrak{g}$ has the following form
$$
Y=(v_e\times \mathrm{id})\circ \mu^*
$$
for a $v_e\in (\mathfrak m_e/\mathfrak m^2_e)^*$, see \cite[Theorem 6.1.1]{Var}.  Note that as in the classical case the vector superspace $(\mathfrak m_e/\mathfrak m^2_e)^*$ may be identified with the vector superspace of all maps $D: (\mathcal O_{\mathcal G})_e \to \K$ satisfying the super Leibniz rule. 
Further the correspondence $v_e \mapsto (v_e\otimes \mathrm{id})\circ \mu^*$ defines an isomorphism of $T_e(\mcG)$ onto $\g$. Similarly we can define a {\it graded Lie group of degree $n$} as a group object in the category of graded manifolds of degree $n$.

\subsubsection{Super Harish-Chandra pairs}
To study a Lie supergroup one uses the theory of super Harish-Chandra pairs, see \cite{Bern} and also \cite{Fioresi,ViLieSupergroup}.

\begin{definition}
	A  super Harish-Chandra pair $(G, \g)$  consists of 
	a Lie superalgebra $\g=\mathfrak{g}_{\bar
		0}\oplus\mathfrak{g}_{\bar 1}$,    a Lie group $G$    such that $\mathfrak{g}_{\bar 0} = \mathrm{Lie} (G)$, and
	an action $\alpha$  of $G$ on $\g$ by authomorphisms such that 
	\begin{itemize}
		\item for any $g\in G$ the action $\alpha(g)$ restricted to $\g_{\bar{0}}$ coincides with the adjoint action $Ad(g)$;
		\item the differential $(d\alpha)_e: \g_0 \to  \Der(\g)$ at the identity $e\in G$ coincides with
		the adjoint representation $\ad$ of $\mathfrak g_{\bar 0}$ in $\mathfrak g$.
	\end{itemize}
\end{definition}
Super Harish-Chandra pairs form a category.  
The following theorem was proved in \cite{ViLieSupergroup} for complex-analytic Lie supergroups.

\begin{theorem}\label{theor HC pairs}
	The category of complex-analytic Lie supergroups is
	equivalent to the category of complex-analytic super Harish-Chandra pairs.
\end{theorem}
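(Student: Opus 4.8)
The plan is to exhibit a pair of mutually quasi-inverse functors between the two categories. The forward functor $\Phi$ sends a complex-analytic Lie supergroup $\mcG$ to the pair $(\mcG_0,\Lie(\mcG))$, where $\Lie(\mcG)$ is the Lie superalgebra of right-invariant vector fields described above. Here $\Lie(\mcG)_{\even}=\Lie(\mcG_0)$, since a right-invariant even vector field restricts to a right-invariant field on $\mcG_0$. The required action $\alpha$ of $\mcG_0$ on $\g:=\Lie(\mcG)$ is obtained by conjugation: each $g\in\mcG_0$ determines, via the point morphism $(\pt,\K)\to\mcG$ together with left and right translations, an automorphism of $\mcG$ fixing the identity, and $\alpha(g)$ is its differential at $e$. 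The two compatibility conditions in the definition of a super Harish-Chandra pair then follow from the classical theory applied to $\mcG_0$ and from the super-Jacobi identity. A morphism of Lie supergroups induces a homomorphism of underlying groups and of Lie superalgebras, so $\Phi$ is a functor.

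The heart of the proof is the reconstruction functor $\Psi$ in the opposite direction. Given a pair $(G,\g)$, I would define a ringed space $\mcG=(G,\mcO_{\mcG})$ over $G$ by
$$
\mcO_{\mcG}(U):=\Hom_{\mcU(\g_{\even})}\bigl(\mcU(\g),\mcO_G(U)\bigr),
$$
where $\mcU(\g)$ is the universal enveloping superalgebra and $\mcU(\g_{\even})$ acts on the sheaf $\mcO_G$ of holomorphic functions on $G$ through the identification $\g_{\even}=\Lie(G)$ with invariant differential operators. By the Poincar\'e--Birkhoff--Witt theorem for Lie superalgebras, $\mcU(\g)\cong\mcU(\g_{\even})\otimes\bigwedge\g_{\odd}$ as a left $\mcU(\g_{\even})$-module, so locally $\mcO_{\mcG}\cong\mcO_G\otimes\bigwedge\g_{\odd}^*$; this shows that $\mcG$ is a split supermanifold of even dimension $\dim G$ and odd dimension $\dim\g_{\odd}$. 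The supercocommutative Hopf-superalgebra structure on $\mcU(\g)$ — comultiplication, counit and antipode — combined with the multiplication, inversion and identity of $G$ and with the action $\alpha$, then furnishes the comorphisms $\mu^*,\kappa^*,\varepsilon^*$ by convolution; the role of $\alpha$ is exactly to make these maps compatible with the left $\mcU(\g_{\even})$-linearity, so that $\mu^*$ globalizes over $G$.

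It then remains to verify that $(\mu,\kappa,\varepsilon)$ obey the group axioms and that $\Psi$ is functorial. Associativity of $\mu$ reduces to coassociativity of the comultiplication together with associativity in $G$, while the identity and inversion axioms reduce to the counit and antipode identities of $\mcU(\g)$. A morphism of pairs induces a homomorphism of enveloping Hopf superalgebras equivariant over the group homomorphism, hence a morphism of the reconstructed supergroups, which gives functoriality. Finally I would construct the natural isomorphisms $\Phi\circ\Psi\cong\id$ and $\Psi\circ\Phi\cong\id$: for the former one computes that the right-invariant vector fields of $\Psi(G,\g)$ recover $\g$ with its adjoint action; for the latter one uses that a Lie supergroup is globally split, its structure sheaf being trivialized by right-invariant odd vector fields, to produce a canonical isomorphism $\mcO_{\mcG}\cong\Hom_{\mcU(\g_{\even})}(\mcU(\g),\mcO_{\mcG_0})$ intertwining all three structure morphisms.

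I expect the main obstacle to be this last identification, i.e.\ essential surjectivity $\Psi\circ\Phi\cong\id$: one must check that the canonical map from $\mcO_{\mcG}$ to the convolution sheaf is an isomorphism of sheaves of superalgebras respecting $\mu^*$, $\kappa^*$ and $\varepsilon^*$, and this is where the precise interplay between the Hopf structure on $\mcU(\g)$ and the action $\alpha$ is used. In the complex-analytic category the decisive point — which makes the convolution construction produce an honest structure sheaf rather than a formal object as in Example \ref{ex sin(y^0+y^2)} — is that, although a general supermanifold need not be split over $\C$ (the Batchelor--Gawedzki theorem failing there), every Lie supergroup nevertheless is split.
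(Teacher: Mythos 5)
Your proposal is correct and follows essentially the same route as the paper: the paper does not reprove this theorem (it cites \cite{ViLieSupergroup}) but it recalls exactly the reconstruction functor you describe, namely the structure sheaf $\mathrm{Hom}_{\mathcal U(\mathfrak g_{\bar 0})}(\mathcal U(\mathfrak g),\mathcal F_G)$ split via Poincar\'e--Birkhoff--Witt, with $\mu^*$, $\kappa^*$, $\varepsilon^*$ given by the Hopf-superalgebra structure on $\mathcal U(\mathfrak g)$ twisted by the action $\alpha$, as in Formulas (\ref{eq structure sheaf of G}) and (\ref{eq umnozh in supergroup}). Your identification of the decisive point in the complex-analytic case --- that every complex Lie supergroup is globally split by right-invariant odd vector fields, which yields the natural isomorphism $\Psi\circ\Phi\cong\id$ --- is precisely the content of the cited reference.
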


A similar result in the real case was obtained in \cite{Kostant}, the algebraic case was treated by several authors, see for example \cite{Gav,Masuoka,MasuokaShibata} and references therein.

 A super Harish-Chandra pair of degree $n$ is a $\mathbb Z$-graded super Harish-Chandra pair $(G,\g)$, where $\g$ is a  $\Z$-graded Lie superalgebra with $\supp (\g) \subset \{0,\ldots, n\}$,  and $G=\Lie (\g_0)$. Moreover, $\alpha(g): \g \to \g$, where $g\in G$, is an automorphism of $\g$ preserving the $\Z$-grading and  $\dd_e\alpha (X) = [X, \cdot]_\g$ for any $X\in \g_0$.  In \cite[Theorem 5.6]{Jubin} it was noticed that an analogue of Theorem \ref{theor HC pairs} holds true for graded Lie groups of degree $n$.

\subsubsection{Construction of the Lie supergroup corresponding to a super Harish-Chandra pair}
Let us remind a reader how to construct a Lie supergroup $\mathcal G$ (or a graded Lie group of degree $n$) using a given super (or of degree $n$) Harish-Chandra pair $(G,\mathfrak g)$. Denote by $\mathcal U(\mathfrak h)$ the universal enveloping algebra of Lie superalgebra $\mathfrak h$.  We need to define a structure sheaf $\mathcal O$ of $\mathcal G$. In the super and graded cases we, respectively, put
\begin{equation}\label{eq structure sheaf of G}
\begin{split}
\mathcal O = \mathrm{Hom}_{\mathcal U(\mathfrak g_{\bar 0})} (\mathcal U(\mathfrak g), \mathcal F_{G}), \quad \mathcal O = \mathrm{Hom}_{\mathcal U(\mathfrak g_{0})} (\mathcal U(\mathfrak g), \mathcal F_{G}).
\end{split}
\end{equation}
Here $\mathcal F_G$ is the sheaf of (smooth or holomorphic) functions on $G$. Recall that in the super- and graded cases we respectively have $G=\Lie(\g_{\bar 0})$ and  $G=\Lie(\g_0)$. 
Using the Hopf superalgebra structure on $\mathcal U(\mathfrak g)$ we can define explicitly the multiplication morphism $\mu$, the inversion morphism $\kappa$ and the identity $\varepsilon$, see for instance \cite{ViLieSupergroup}. Indeed, assume that a super  or graded of degree $n$ Harish-Chandra pair $(G,\mathfrak g)$ is given. Let us define the supergroup structure of the corresponding Lie supergroup or graded Lie group  $\mathcal G$ of degree $n$. Let $X\cdot Y\in
\mathcal U(\mathfrak{g}\oplus \mathfrak{g})\simeq
\mathcal U(\mathfrak{g})\otimes \mathcal U(\mathfrak{g})$, where
$X$ is from the first copy of $\mathcal U(\mathfrak{g})$ and $Y$
from the second one, $f\in \mathcal{O}$, see (\ref{eq structure sheaf of G}), and $g,\,h\in
G$. The following formulas define a multiplication morphism, an
inverse morphism and an identity morphism respectively:
\begin{equation}\label{eq umnozh in supergroup}
\begin{split}
\mu^*(f)(X\cdot Y)(g,h)&=f(\alpha(h^{-1})(X)\cdot Y)(gh);\\
\kappa^*(f)(X)(g^{-1})&=f(\alpha(g^{-1})(S(X)))(g);\\
\varepsilon^*(f)&=f(1)(e).
\end{split}
\end{equation}
Here $S$ is the antipode map in $\mathcal U(\mathfrak{g})$ considered as a Hopf superalgebra and $\alpha$ is as in the definition of a super Harish-Chandra pair. Theorem \ref{theor HC pairs} or \cite[Theorem 5.6]{Jubin} imply that any Lie supergroup or graded Lie group is isomorphic to a Lie supergroup with the structure sheaf (\ref{eq structure sheaf of G}) and the structure morphisms (\ref{eq umnozh in supergroup}).

\subsection{Inverse limit of graded manifolds and graded Lie groups}\label{sec inverse limit} 

In this paper we will work with a special type of infinite dimensional graded manifolds or graded Lie groups, more precisely with the inverse limit of graded manifolds or of graded Lie groups with fixed base space $N$.  Related construction of a direct limit of (super)manifolds may be found for instance in \cite{DonPen,OniVi}.

Let we have a family of graded manifolds $\mcN_k=(N,\mcO_{\mcN_k})$ of degree $k=0,1, \ldots$. For any $k$ we have a projection $\pr^k_{k-1}$ of $\mcN_k$ to a graded manifold $\mcN'_{k-1}$ of degree $k-1$, see Remark \ref{rem projection of graded}. Assume that our family of graded manifolds $(\mcN_k)_{k\in \Z}$ is compatible in the following sense. For any $k\in \Zo$  we have $\pr^k_{k-1}(\mcN_k) = \mcN_{k-1}$. 
Therefore for any $n\in \Zo$ we have the following sequence of projections
$$
\mcN_0 \xleftarrow{\pr^1_0} \mcN_1 \xleftarrow{\pr^2_1} \mcN_2 \xleftarrow{\pr^3_2} \ldots \xleftarrow{\pr^n_{n-1}} \mcN_n.
$$
For the corresponding structure sheaves we have the following sequence of inclusions
\begin{equation}\label{eq direct sequence of sheaves}
\mcO_{\mcN_0} \xrightarrow{(\pr^1_0)^*} \mcO_{\mcN_1} \xrightarrow{(\pr^2_1)^*} \mcO_{\mcN_2} \xrightarrow{(\pr^3_2)^*} \cdots \xrightarrow{(\pr^n_{n-1})^*} \mcO_{\mcN_n}, \quad n\in \Zo. 
\end{equation}
We put
$$
\mcN: = (N, \mathcal O_{\mcN}), \quad \text{ where } \mcO_{\mcN} : = \bigcup_{k=0}^\infty \mcO_{\mcN_{k}}, 
$$
That is $f\in \mcO_{\mcN}$ if and only if there exist $k$ such that $f\in \mcO_{\mcN_k}$. In other words, $\mathcal O_{\mcN}$ is the {\it direct limit} of the {\it direct system} of sheaves (\ref{eq direct sequence of sheaves}). We will write
$$
\mcN = \varprojlim \mcN_k, \quad \mathcal O_{\mcN} = \varinjlim \mathcal O_{\mcN_k}. 
$$

A {\it morphism}  $\Phi: \mcN \to \mcN'$, where $\mcN= \varprojlim \mcN_k$ and $\mcN'= \varprojlim \mcN'_k$, is a family of morphisms $(\Phi_k: \mcN_k \to \mcN'_{k})$ 
such that 
$$
\Phi_{k}\circ \pr^{k+1}_{k} = \pr^{k+1}_{k} \circ  \Phi_{k+1},\quad k\in \Z.
$$

The inverse limit of graded Lie groups has a graded Lie group structure. Here we additionally assume that the projections $\pr^{k+1}_k$ are graded Lie group homomorphisms.

\section{A $\Z^{\geq 0}$-covering of a supermanifold}

In this section we introduce a new concept: {\bf a $\Z^{\geq 0}$-covering of a super\-manifold}. In Introduction we gave several examples of coverings in geometry and algebra. Consider for instance a flat cover $C$ of a module $M$. In this case we construct a cover of a module $M$ in the category of flat modules. This paper is devoted to the following question: can a supermanifold  be covered in the category of  graded manifolds? Such a covering if exists we will call a {\bf $\Z^{\geq 0}$-covering of a supermanifold}.

Our idea is the following. For any supermanifold $\mcM$ we define a $\Z^{\geq 0}$-covering of $\mcM$ as a certain non-negatively $\Z$-graded manifold $\mcP$ together with a covering projection $\pp:\mcP\to \mcM $. It has to satisfy the universal property similar to the geometric and algebraic cases. For instance any morphism $\psi: \mcN\to \mcM$ of a $\Z$-graded manifold $\mcN$ to the supermanifold $\mcM$ has to factor through $\pp$. In other words it has to exist a $\Z$-graded morphism $\Psi: \mcN\to \mcP$ such that $\pp\circ \Psi  =\psi$. 

Assume that we constructed a $\Z^{\geq 0}$-covering $\pp:\mcP\to \mcM $. 
Consider a graded manifold $\mcN=(\pt,\bigwedge (\zeta))$, where we assume that $\zeta$ is an odd variable of weight $2n+1$. Let $\psi:\mcN\to \mcM$ be a $\Z_2$-graded morphism of locally ringed spaces and $\Psi$ be a lift of $\psi$ such that $\pp\circ \Psi  =\psi$.  Since $\Psi$ has to be $\Z$-graded, this implies that $\mcP$ has local coordinates of weight $2n+1$. Similarly for even coordinates. This implies that $\mcP$ has to be infinite dimensional, since it has local coordinates of any weight in $\Zo$.

\subsection{Definition of a $\Z^{\geq 0}$-covering of a supermanifold} We suggest the following definition of a $\Z^{\geq 0}$-covering $\mcP$ of a supermanifold $\mcM=(\mcM_0,\mcO)$.  As we saw above the $\Z^{\geq 0}$-covering $\mcP$ of $\mcM$ cannot be finite dimensional.

  Let $\mcP = \varprojlim \mcP_n$ be the inverse limit of  graded manifolds $\mcP_n$ of degree $n$  with the same underlying space $(\mcP_n)_0=  \mcP_0$, see Section \ref{sec inverse limit}. In this case the structure sheaves $\mcO_{\mcP_n}$ of $\mcP_n$ and $\mcO_{\mcP}$ of $\mcP$ are $\Z$-graded. In other words, 
$$
\mcO_{\mcP_n}= \bigoplus\limits_{q\in \Z} (\mcO_{\mcP_n})_q,\quad \mcO_{\mcP}= \bigoplus\limits_{q\in \Z} (\mcO_{\mcP})_q .
$$ 
Let again $\mathcal I_k$ be the sheaf of ideals in $\mcO_{\mcP}$ generated by all $(\mcO_{\mcP})_q$, where $q\geq k+1$. We have the following natural isomorphism of sheaves of superalgebras 
\begin{equation}\label{eq iso of sheaves}
{\mcO}_{\mcP}/{\mathcal I}_k \simeq {\mcO}_{\mcP_n}/({\mathcal I}_k\cap {\mcO}_{\mcP_n}),\quad n\geq k\geq 0.
\end{equation}
Denote 
$$
\mcP^{(k)}: = (\mcP_0,{\mcO}_{\mcP}/{\mathcal I}_k )\simeq (\mcP_0, {\mcO}_{\mcP_n}/({\mathcal I}_k\cap {\mcO}_{\mcP_n})),\quad n\geq k. 
$$ 
Note that we have the following natural isomorphism of sheaves of vector spaces
\begin{align*}
&{\mcO}_{\mcP}/{\mathcal I}_k \simeq ({\mcO}_{\mcP})_0\oplus ({\mcO}_{\mcP})_1\oplus \cdots \oplus  ({\mcO}_{\mcP})_k, \quad k\geq 0.
\end{align*}

 An  atlas on $\mcP= \varprojlim \mcP_n$ is a set $\{\mathcal V_{\lambda} \}$ of graded domains $\mathcal V_{\lambda}$, where each $ \mathcal V_{\lambda}$ is the inverse limit of graded domains $(\mathcal  V_{\lambda})_n$ of $\mcP_n$ and $\{(\mathcal V_{\lambda})_n \}$ is an atlas of $\mcP_n$ for any $n$. Let $(y_{a_s}^s,\eta^t_{b_t})$, where $s$ is an even non-negative integer and  $t$ is an odd non-negative integer, be graded coordinates in $\mathcal V_{\lambda} = \varprojlim (\mathcal V_{\lambda})_n$. As above the weight of $y_{a_s}^s$ is equal to $s$ and the weight of $\eta^t_{b_t}$ is equal to $t$.  We call such coordinates {\it adapted}, if  $(y_{a_s}^s,\eta^t_{b_t})_{s,t\leq n}$ are graded coordinates in $(\mathcal V_{\lambda})_n$ for any $n$. 
  We will denote images of  $y_{a_s}^s$ or $\eta^t_{b_t}$, $s,t\leq k$, in the structure sheaf of $\mcP^{(k)}$ also by $y_{a_s}^s$ or $\eta^t_{b_t}$, respectively. For example for the image of $y_{a_s}^s$, $s\leq k$, in $\mcO_{\mcP^{(k)}}$ we use the same notation $y_{a_s}^s$.

\begin{definition}\label{def new-covering of supermanifold}  A $\Z^{\geq 0}$-covering  of a supermanifold $\mcM$ is an infinite dimensional graded manifold $\mcP= \varprojlim \mcP_n$ such that $\mcM_0=\mcP_0$ together with a compatible family of morphisms $\{\pp^{(k)} : \mcP^{(k)}\to \mcM\}_{k\in \Z^{\geq 0}}$ of $\Z_2$-graded ringed spaces such that
  we can choose atlases $\{\mathcal U_{\lambda}\}$ and $\{\mathcal V_{\lambda} = \varprojlim (\mathcal V_{\lambda})_n \}$ on $\mcM$ and $\mcP$, respectively, with $(\mathcal U_{\lambda})_0= (\mathcal V_{\lambda})_0$,  with even and odd coordinates $(x_a,\xi_b)$ of $\mathcal U_{\lambda}$ and  with adapted graded coordinates $(y_a^s,\eta^t_b)$ of $\mathcal V_{\lambda}$, where $s$ is an even non-negative integer and  $t$ is an odd non-negative integer,  such that
	$$
	\pr_s\circ (\pp^{(k)})^*(x_a) =  y_a^s, \quad \pr_t\circ (\pp^{(k)})^*(\xi_b) =  \eta^t_b,\quad s,t \leq k,
	$$
	where $\pr_q: \mathcal O_{\mathcal P^{(k)}}\to (\mathcal O_{\mathcal P^{(k)}})_q$, $q\in \Z$, is the natural projection. 
\end{definition}

\begin{remark}
\label{rem series for p^*}
	We can write locally the morphism $\pp^{(k)} : \mcP^{(k)}\to \mcM$, where $k\in \Z^{\geq 0}$, from 	Definition \ref{def new-covering of supermanifold} without use of projections $\pr_q$. More precisely, we can write $\pp^{(k)} : \mcV_{\lambda}^{(k)}\to \mcU_{\lambda}$ in  coordinates from Definition \ref{def new-covering of supermanifold}  in the following form
	\begin{equation}\label{eq pp^* without projections pr}
	\begin{split}
&	(\pp^{(k)})^*(x_a) =  y_a^0+ y_a^2+ y_a^4+\cdots + y_a^{2v} \mod \mcI_k; \\  
&	(\pp^{(k)})^*(\xi_b) =  \eta^1_b + \eta^3_b + \eta^5_b + \cdots  + \eta_b^{2v+1} \mod \mcI_k,
	\end{split}
	\end{equation}
where $v>k$ is any integer. 
Sometimes instead of (\ref{eq pp^* without projections pr}) we will use the following schematic series notation omitting superscript $(k)$  meaning (\ref{eq pp^* without projections pr}):
	\begin{equation}\label{eq series for p^*}
	\begin{split}
		\pp^*(x_a) =  y_a^0+ y_a^2+ y_a^4+\cdots; \quad  
	\pp^*(\xi_b) =  \eta^1_b + \eta^3_b + \eta^5_b + \cdots .
	\end{split}
	\end{equation}
	Note that the series on the right hand side (\ref{eq series for p^*}) are not elements of the structure sheaf $\mcO_{\mcP}$. 
\end{remark}

\begin{remark}\label{rem P_n to M}
	Note that if the morphism $\pp^{(k)} : \mcP^{(k)}\to \mcM$, $k\geq 0$, there exists, it is locally determined by Formula (\ref{eq pp^* without projections pr}). Indeed, by Formula \ref{eq iso of sheaves} the morphism  $\pp^{(k)}$ can be regarded as a morphism $\pp^{(k)}: \mcP^{(k)}_k\to \mcM$, which is determined by the images of local coordinates, see Theorem \ref{theor Leites analog psi is unique}.

	 Further, if we have (\ref{eq pp^* without projections pr}), by Theorem \ref{theor Leites analog psi is existence} there exists unique morphism 
	 $$
	 \pp^{(k)}: \mcV^{(k)}_{\lambda}\simeq (\mcV_{\lambda})_k^{(k)}\to \mcU_{\lambda}, \quad k\geq 0,
	 $$ 
	 compatible with (\ref{eq pp^* without projections pr}). 
\end{remark}

\subsection{Properties of a $\Z^{\geq 0}$-covering of a supermanifold}
We will prove that our covering, if exists, satisfies the universal properties as other coverings in geometry and algebra. 
Let as above $\mcM=(\mcM_0,\mcO)$ be a supermanifold and $\mcN=(\mcN_0,\mcO_{\mcN})$ be a graded manifold. 
Denote $\mcN^{(k)} := (\mcN_0, \mcO_{\mcN}/(\mcI_{\mcN})_k)$, where $(\mcI_{\mcN})_k$ is the sheaf of ideals in $\mcO_{\mcN}$ generated by $(\mcO_{\mcN})_q$ for any $q\geq k+1$.

\begin{theorem}\label{theor univ_property supermanifold} Let $\mcN$ be a graded manifold, $\mathcal M$ be a supermanifold, $(\psi^{(k)}: \mcN^{(k)} \to \mcM)$ be a morphism see Definition \ref{def morphism N to M} and let $\mathcal P$  be a $\Z^{\geq 0}$-covering of $\mathcal M$, see Definition \ref{def new-covering of supermanifold}. Then there exists unique morphism of graded manifolds $\Psi= \varprojlim \Psi_k: \mcN\to \mcP$, where $\Psi_k: \mcN\to \mathcal P_k$ such that the following diagrams are commutative for any $k\geq 0$
	\begin{equation}\label{eq commut diagramm univ prop}	\begin{tikzcd}
	& \mathcal P^{(k)}  \arrow[d, "\pp^{(k)}"]\\
	\mcN^{(k)} \arrow[ur, dashrightarrow, "\exists ! \Psi^{(k)}"]   \arrow[r, "\psi^{(k)}"]  & \mathcal M,
	\end{tikzcd}
	\end{equation}
where $\Psi^{(k)}: \mcN^{(k)} \to \mathcal P^{(k)}$ are morphisms of ringed spaces naturally induced by $\Psi$.  
\end{theorem}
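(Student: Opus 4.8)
The plan is to construct $\Psi$ chart by chart on coordinates, glue the local pieces, and then read off commutativity, compatibility with the projections, and uniqueness. Fix compatible atlases $\{\mcU_\lambda\}$ on $\mcM$ and $\{\mcV_\lambda = \varprojlim (\mcV_\lambda)_n\}$ on $\mcP$ as in Definition \ref{def new-covering of supermanifold}, with coordinates $(x_a,\xi_b)$ on $\mcU_\lambda$ and adapted graded coordinates $(y_a^s,\eta_b^t)$ on $\mcV_\lambda$, so that $\pp^{(k)}$ is given locally by Formula (\ref{eq pp^* without projections pr}). By Theorem \ref{theor Leites analog psi is unique} the morphism $\psi^{(k)}$ is locally determined by the images $(\psi^{(k)})^*(x_a)$ and $(\psi^{(k)})^*(\xi_b)$; decomposing these by weight via the isomorphism (\ref{eq O/I = O_0+O_1+ ... +O_n}), I would define the candidate $\Psi_k:\mcN\to\mcP_k$ on coordinates by
\[
(\Psi_k)^*(y_a^s) := \pr_s\big((\psi^{(k)})^*(x_a)\big), \qquad (\Psi_k)^*(\eta_b^t) := \pr_t\big((\psi^{(k)})^*(\xi_b)\big), \qquad s,t\le k.
\]
These images are homogeneous of the correct weight and parity; moreover the weight-$0$ component $(\Psi_k)^*(y_a^0)=\pr_0(\psi^{(k)})^*(x_a)$ is the pullback of $x_a$ by the base map $\psi_0$, whose values lie in the base domain $(\mcV_\lambda)_0=(\mcU_\lambda)_0$, so the base condition required in (\ref{eq mor of graded domains}) holds. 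Hence the existence statement for morphisms of graded domains (the paragraph following (\ref{eq mor of graded domains})) produces a unique local morphism of graded domains realizing these images.

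Next I would glue the local pieces. On an overlap $\mcV_\lambda\cap\mcV_\mu$ the graded coordinates are related by transition functions which, because $\pp^{(k)}$ is a globally defined morphism intertwining the $x_a$ with the $y_a^s$ via (\ref{eq pp^* without projections pr}), are precisely the weight-components of the transition functions $\tilde x_a = f_a(x,\xi)$ of $\mcM$ after the substitution $x_a\mapsto\sum_s y_a^s$, $\xi_b\mapsto\sum_t \eta_b^t$. Applying $(\Psi_k)^*$ built in the $\lambda$-chart and using that $(\Psi_k)^*\circ(\pp^{(k)})^* = (\psi^{(k)})^*$ on $x_a,\xi_b$ together with the fact that $\psi^{(k)}$ is a genuine morphism, so that $(\psi^{(k)})^*(f_a(x,\xi)) = f_a\big((\psi^{(k)})^*x,(\psi^{(k)})^*\xi\big)$ by the Taylor construction of the pullback (Theorem \ref{theor Leites analog psi is existence}), the weight-$s$ part of the image of $\tilde x_a$ computed in the $\lambda$-chart equals $\pr_s(\psi^{(k)})^*(\tilde x_a)$, which is exactly the $\mu$-chart definition. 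Thus the two local morphisms agree on the overlap and glue to a global $\Psi_k:\mcN\to\mcP_k$. This gluing step is the main obstacle: it is where the covering structure (that $\pp$ records $x_a$ as the full homogeneous sum $\sum_s y_a^s$) and the hypothesis that $\psi^{(k)}$ is a morphism must be combined, and one has to check carefully that taking weight-components commutes with the Taylor substitution defining pullbacks of transition functions.

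Finally, with the $\Psi_k$ in hand I would verify the remaining claims on coordinates, where every morphism is determined by coordinate images (Theorem \ref{theor Leites analog psi is unique}). Commutativity of (\ref{eq commut diagramm univ prop}) follows from $(\Psi^{(k)})^*\big((\pp^{(k)})^*(x_a)\big) = (\Psi^{(k)})^*\big(\sum_s y_a^s\big) = \sum_s \pr_s(\psi^{(k)})^*(x_a) = (\psi^{(k)})^*(x_a)$, and likewise for $\xi_b$. Compatibility of the family, $\pr^{k+1}_k\circ\Psi_{k+1} = \Psi_k$, reduces on coordinates to $\pr_s(\psi^{(k+1)})^*(x_a) = \pr_s(\psi^{(k)})^*(x_a)$ for $s\le k$, which is precisely the compatibility of $(\psi^{(k)})$ built into Definition \ref{def morphism N to M}; hence $\Psi = \varprojlim\Psi_k$ is a well-defined morphism $\mcN\to\mcP$. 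For uniqueness, any lift $\Psi'$ satisfies $(\Psi')^*\big(\sum_s y_a^s\big) = (\psi^{(k)})^*(x_a)$, and since $(\Psi')^*$ preserves weight, applying $\pr_s$ forces $(\Psi')^*(y_a^s) = \pr_s(\psi^{(k)})^*(x_a) = (\Psi_k)^*(y_a^s)$, with the analogous identity on the $\eta_b^t$, so $\Psi' = \Psi$.
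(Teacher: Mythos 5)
Your proposal is correct and follows essentially the same route as the paper's proof: define $\Psi_k$ locally on adapted coordinates by $(\Psi_k)^*(y_a^s)=\pr_s(\psi^{(k)})^*(x_a)$, glue on overlaps using the intertwining identity $(\Psi^{(k)})^*\circ(\pp^{(k)})^*=(\psi^{(k)})^*$ together with the fact that the transition functions of $\mcP$ are the weight components of those of $\mcM$, and then read off commutativity, compatibility and uniqueness on coordinates via Theorem \ref{theor Leites analog psi is unique}. The step you single out as the main obstacle (gluing) is exactly the paper's Step 3, handled by the same computation.
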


\begin{proof} We split the proof into steps. 
	
	{\bf Step 1.} Recall that any sheaf morphism $\psi^{(k)}$ has the same underlying map $\psi_0$, see Definition \ref{def morphism N to M}. Further by Definition \ref{def new-covering of supermanifold} we can choose atlases $\{\mathcal U_{\lambda}\}$ and $\{\mathcal V_{\lambda} = \varprojlim (\mathcal V_{\lambda})_n \}$ on $\mcM$ and $\mcP$, respectively.
	We put $W_{\lambda}:= \psi^{-1}_0(( \mathcal U_{\lambda})_0)$ and $\mathcal W_{\lambda}:=(W_{\lambda},\mcO_{\mcN}|_{W_{\lambda}}) $. Let us define the morphism $\Psi_{W_{\lambda}}: \mathcal W_{\lambda}\to \mcV_{\lambda}$ for any ${\lambda}$ using coordinates $(x_a,\xi_b)$ in $\mcU_{\lambda}$ and adapted coordinates $(y^t_a,\eta^s_b)$  in $\mcV_{\lambda}$, see Definition \ref{def new-covering of supermanifold}. To define $\Psi_{W_{\lambda}}$ we define first 
	$$
	(\Psi_{W_{\lambda}})_n :\mathcal W_{\lambda}\to (\mcV_{\lambda})_n \quad  \text{for any} \quad  n\geq 0.
	$$ 
	 We put 
	\begin{align*}
		&((\Psi_{W_{\lambda}})_n)^* (y_a^s ) := \pr_{s} \circ (\psi^{(n)})^* ( x_a) \in (\mcO_{\mcN})_s|_{W_{\lambda}}, \\ &((\Psi_{W_{\lambda}})_n)^* (\eta^t_b ) := \pr_{t} \circ (\psi^{(n)})^* ( \xi_b) \in (\mcO_{\mcN})_t|_{W_{\lambda}}, \quad s,t\leq n,
	\end{align*}
	where $\pr_q:\mathcal O_{\mcN} \to (\mathcal O_{\mcN})_q\subset \mcO_{\mcN}/(\mcI_{\mcN})_n$, $q\leq n$, is the natural projection. By \cite[Section 2.1.7, Theorema]{Leites} 
	we defined a morphism $(\Psi_{W_{\lambda}})_n: \mathcal W_{\lambda} \to( \mathcal V_{\lambda})_n$ of graded domains (considered as superdomains) for any $n$. The family $((\Psi_{W_{\lambda}})_n)$ is compatible by construction, hence it defines the morphism  $\Psi_{W_{\lambda}}: \mathcal W_{\lambda} \to \mathcal V_{\lambda}$, where 
	$$
	\Psi_{W_{\lambda}} := \varprojlim (\Psi_{W_{\lambda}})_n.
	$$ 
	By construction, $(\Psi_{W_{\lambda}})^*$ is $\Z$-graded.

	{\bf Step 2.} Let us prove that the morphism $\Psi_{W_{\lambda}}$ satisfies the following equality
	\begin{equation}\label{eq Psi|W_i}
	(\Psi_{W_{\lambda}}^{(k)})^* \circ (\pp^{(k)})^*(F) =  (\psi^{(k)})^*(F),\quad k\geq 0,
	\end{equation}
	where  $F\in \mathcal O|_{(\mathcal U_{\lambda})_0}$.  Indeed,  by definition of $\Psi_{W_{\lambda}}$ the equality (\ref{eq Psi|W_i}) holds for $F=x_a$ or $\xi_b$. Now we apply Theorem \ref{theor Leites analog psi is unique}.

	{\bf Step 3.} We have to show that  $\Psi_{W_i} = \Psi_{W_j}$ in $W_i\cap W_j$. Let $(\tilde x_{c}, \tilde \xi_{d})$ be local coordinates in $\mathcal W_j$, $\tilde x_{c}= H(x_a,\xi_b)$ in $\mathcal U_i\cap \mathcal U_j$  and $\tilde y^s_{c}= \pr_{s} \circ (\pp^{(k)})^* ( \tilde x_{c})$, $k\geq s$. We have by Step $2$ for $k\geq s$ in $W_i\cap W_j$:
	\begin{align*}
	(\Psi_{W_j}^{(k)})^* (\tilde y^s_{c}) = \pr_s\circ (\Psi_{W_j}^{(k)})^* &( (\pp^{(k)})^*(\tilde x_{c})) = \pr_s\circ (\psi^{(k)})^*(\tilde x_{c}) =\\ 
	\pr_s\circ (\psi^{(k)})^*(H(x_a,\xi_b))=& 
  (\Psi_{W_i}^{(k)})^*  (\pr_s\circ (\pp^{(k)})^* (H(x_a,\xi_b))) = (\Psi_{W_i}^{(k)})^* (\tilde y^s_{c}).
	\end{align*}
	Similarly we can prove that $(\Psi_{W_j}^{(k)})^* (\tilde \eta^t_{d}) = (\Psi_{W_i}^{(k)})^* (\tilde \eta^t_{d})$, where $ \tilde \eta^t_{d} = \pr_{t} \circ (\pp^{(k)})^* ( \tilde \xi_{d})$,  $k\geq t$. Now the statement follows from Theorem \ref{theor Leites analog psi is unique}. We define the morphism $\Psi$ by $\Psi|_{W_i}:= \Psi_{W_i}$.

	{\bf Step 4.} The family $\Psi^{(k)}$ is unique. Indeed, above, see Step 1, we saw that locally there is unique way to make the Diagram \ref{eq commut diagramm univ prop} commutative. The proof is complete. 
\end{proof}

\begin{remark}
To shorten the notation, we will sometimes write 
	$$
	\begin{tikzcd}
	& \mathcal P \arrow[d, "\pp"]\\
	\mcN \arrow[ur, dashrightarrow, "\exists ! \Psi"]   \arrow[r, "\psi"]  & \mathcal M
	\end{tikzcd}
	$$
	instead of (\ref{eq commut diagramm univ prop}). 
\end{remark}

 \begin{corollary}\label{corol transition function for a covering}
 	In notations of the proof of Theorem  \ref{theor univ_property supermanifold}, Step 3, let $\tilde x_c= H_c(x_a,\xi_b)$ and $\tilde \xi_d= G_d(x_a,\xi_b)$ be transition functions in $\mathcal U_i\cap \mathcal U_j$. By definition of adapted coordinates we have $\tilde y^s_c= \pr_{s} \circ \pp^* ( \tilde x_c) $ and $\tilde \eta^t_d= \pr_{t} \circ \pp^* ( \tilde \xi_d) $. Then in $\mathcal V_i\cap \mathcal V_j$ we have
 	$$
 	\tilde y^s_c = \pr_{s} \circ \pp^*  (H_c(x_a,\xi_b)),\quad \tilde \eta^t_d = \pr_{t} \circ \pp^*  (G_d(x_a,\xi_b)). 
 	$$
 \end{corollary}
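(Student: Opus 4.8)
The plan is to reduce both claimed identities to the single structural fact that $\pp$ is one globally defined morphism, so that $\pp^*$ assigns to a section of $\mcO_\mcM$ a well-defined section of $\mcO_\mcP$ independently of the coordinate expression chosen for that section. First I would fix a weight $k$ and, following Remark \ref{rem series for p^*}, read the schematic symbol $\pp^*$ as the honest morphism $(\pp^{(k)})^*$ with all equalities understood modulo $\mcI_k$; it then suffices to establish $\pr_s\circ(\pp^{(k)})^*(\tilde x_c)=\pr_s\circ(\pp^{(k)})^*(H_c(x_a,\xi_b))$ for every $k\geq s$, together with the odd analogue $\pr_t\circ(\pp^{(k)})^*(\tilde\xi_d)=\pr_t\circ(\pp^{(k)})^*(G_d(x_a,\xi_b))$ for every $k\geq t$.

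The key step is the remark that over the overlap the coordinate $\tilde x_c$ and the superfunction $H_c(x_a,\xi_b)$ are \emph{the same} section of $\mcO_\mcM$ on $(\mcU_i)_0\cap(\mcU_j)_0$ --- this is precisely the meaning of $\tilde x_c=H_c(x_a,\xi_b)$ being a transition function. Since $(\pp^{(k)})^*$ is a homomorphism of $\Z_2$-graded structure sheaves, being part of the morphism of ringed spaces $\pp^{(k)}\colon \mcP^{(k)}\to\mcM$, it carries equal sections to equal sections, so $(\pp^{(k)})^*(\tilde x_c)=(\pp^{(k)})^*(H_c(x_a,\xi_b))$. Applying $\pr_s$ and invoking the defining relation of the adapted coordinates of the chart $\mcV_j$, namely $\pr_s\circ(\pp^{(k)})^*(\tilde x_c)=\tilde y^s_c$ from Definition \ref{def new-covering of supermanifold}, gives the even identity at once; the odd case is verbatim the same with $G_d$ and $\pr_t$.

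It then remains to see that the right-hand side $\pr_s\circ\pp^*(H_c(x_a,\xi_b))$ is a bona fide formula in the adapted coordinates $(y^s_a,\eta^t_b)$ of $\mcV_i$. For this I would appeal to Theorem \ref{theor Leites analog psi is existence}: the morphism $(\pp^{(k)})^*$ evaluates on the composite superfunction $H_c(x_a,\xi_b)$ by substitution through the Taylor formula, so that $(\pp^{(k)})^*(H_c(x_a,\xi_b))=H_c((\pp^{(k)})^*(x_a),(\pp^{(k)})^*(\xi_b))$ modulo $\mcI_k$, the arguments being the finite truncations of the series $\pp^*(x_a)=y^0_a+y^2_a+\cdots$ and $\pp^*(\xi_b)=\eta^1_b+\eta^3_b+\cdots$ recorded in Remark \ref{rem series for p^*}. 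I expect the main (and essentially only) obstacle to be this bookkeeping: one must check that the Taylor substitution is legitimate, the higher-weight arguments being nilpotent modulo $\mcI_k$ exactly as in the proof of Theorem \ref{theor Leites analog psi is existence}, and that the two computations, one carried out in $\mcV_j$ and one in $\mcV_i$, agree on the overlap because they are merely two coordinate renderings of one global morphism. Once this is in place the identities follow with no further calculation, mirroring the computation already performed in Step 3 of the proof of Theorem \ref{theor univ_property supermanifold}.
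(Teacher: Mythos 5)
Your proposal is correct and follows essentially the same route as the paper: the paper's proof is exactly the one-line observation that $\tilde x_c$ and $H_c(x_a,\xi_b)$ are the same section of $\mcO_{\mcM}$ over the overlap, so applying $\pr_s\circ\pp^*$ to both and invoking the definition of adapted coordinates gives the identity. Your additional third paragraph on the Taylor-substitution bookkeeping is harmless extra detail (it is really the content of Theorem \ref{theor Leites analog psi is existence} and Remark \ref{rem series for p^*}) rather than a different argument.
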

 
 \begin{proof}
 	The statement follows from the definition of adapted coordinates. Indeed, 
 	\begin{align*}
 	\tilde y^s_c = \pr_{s} \circ \pp^* ( \tilde x_c)=  \pr_{s} \circ \pp^*  (H_c(x_a,\xi_b)),\quad \tilde \eta^t_d = \pr_{t} \circ \pp^* ( \tilde \xi_d)= \pr_{t} \circ \pp^*  (G_d(x_a,\xi_b)). 
 	\end{align*} 
 \end{proof}
 
 \begin{remark}\label{rem p_n:mcP^(n) to mvM}
Note that if the covering morphism $\pp: \mcP\to \mcM$ there exists, this does not imply that there exists an induced morphism $\pp_n: \mcP_n\to \mcM$. However we have the naturally induced morphism $\pp^{(n)}: \mcP^{(n)}_n\to \mcM$ for any $n$. Further let $\mcN=(\mcN_0,\mcO_{\mcN})$ be a graded manifold of degree $n$ and  $\mcN^{(n)}=(\mcN_0,\mcO_{\mcN}/(\mcI_{\mcN})_n)$ be as above. Let we have a morphism of $\Z_2$-graded locally ringed spaces $\psi^{(n)}:\mcN^{(n)} \to \mcM$. (Note that $\psi^{(n)}$ is not necessary induced by a morphism  $\mcN \to \mcM$.) In this case we also can find unique morphism  $\Psi^{(n)}:\mcN^{(n)} \to \mcP^{(n)}$ such that $\pp^{(n)} \circ \Psi^{(n)} = \psi^{(n)}$. To do this we repeat the argument of the proof of Theorem \ref{theor univ_property supermanifold}. More precisely, in Step $1$, Theorem \ref{theor univ_property supermanifold}, we replace $(\Psi_{W_{\lambda}})_n$ by $(\Psi_{W_{\lambda}}^{(n)})_n$ and repeat word to word the rest.  
\end{remark}

Now we are ready to prove the following statement.

\begin{theorem}\label{theor morphism psi can be covered by Psi}
	Let $\mcM$, $\tilde{\mcM}$ be two supermanifolds and $\mathcal P$, $\tilde{\mathcal P}$ be their $\Z^{\geq 0}$-coverings, respectively. Let $\psi:\mcM\to \tilde{\mcM}$ be a morphism of supermanifolds. Then there exists  unique morphism $\Psi:\mathcal P\to \tilde{\mathcal P}$ of $\Z$-graded manifolds such that the following diagram is commutative
	$$
	\begin{tikzcd}
	\mathcal P \arrow[r, "\exists!\Psi"] \arrow["\pp", d]
	& \tilde{\mathcal P} \arrow[d, "\tilde\pp" ] \\
	\mcM\arrow[r,  "\psi" ]
	& \tilde{\mcM}
	\end{tikzcd}
	$$
	In particular, if a $\Z^{\geq 0}$-covering of a supermanifold $\mcM$ there exists, it is unique up to isomorphism. 
\end{theorem}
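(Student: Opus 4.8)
The plan is to deduce the statement from the universal property established in Theorem \ref{theor univ_property supermanifold}, applied to the graded manifold $\mcP$ and the supermanifold $\tilde{\mcM}$. The crucial observation is that the composite $\psi\circ\pp$ is a morphism of the graded manifold $\mcP$ to the supermanifold $\tilde{\mcM}$ in the sense of Definition \ref{def morphism N to M}. Once this is known, Theorem \ref{theor univ_property supermanifold} (with $\mcN=\mcP$, target supermanifold $\tilde{\mcM}$, and covering $\tilde{\mcP}$) produces a unique morphism $\Psi=\varprojlim\Psi_k:\mcP\to\tilde{\mcP}$ of $\Z$-graded manifolds satisfying $\tilde\pp^{(k)}\circ\Psi^{(k)}=(\psi\circ\pp)^{(k)}$ for all $k$, which is precisely the commutativity $\tilde\pp\circ\Psi=\psi\circ\pp$ of the square. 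Both existence and uniqueness of $\Psi$ then follow at once, and $\Psi$ is automatically $\Z$-graded and compatible with the inverse-limit structure because it is supplied by Theorem \ref{theor univ_property supermanifold}.

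To produce the required compatible family, I would set, for each $k\geq 0$,
$$
\phi^{(k)}:=\psi\circ\pp^{(k)}:\mcP^{(k)}\to\tilde{\mcM},\qquad (\phi^{(k)})^*=(\pp^{(k)})^*\circ\psi^*.
$$
Each $\phi^{(k)}$ is a morphism of $\Z_2$-graded locally ringed spaces, being a composition of the morphism $\pp^{(k)}$ from Definition \ref{def new-covering of supermanifold} with the supermanifold morphism $\psi$; locality on the reduced level is preserved under composition. Compatibility of the family $(\phi^{(k)})$ reduces to that of $(\pp^{(k)})$: using $\pr^{k+1}_k\circ(\pp^{(k+1)})^*=(\pp^{(k)})^*$ one gets
$$
\pr^{k+1}_k\circ(\phi^{(k+1)})^*=\pr^{k+1}_k\circ(\pp^{(k+1)})^*\circ\psi^*=(\pp^{(k)})^*\circ\psi^*=(\phi^{(k)})^*,
$$
so $(\phi^{(k)})$ is a morphism in the sense of Definition \ref{def morphism N to M}. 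Applying the universal property to the family $(\phi^{(k)})$ yields the morphism $\Psi$ above.

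For the final assertion, I would apply the first part to $\tilde{\mcM}=\mcM$ and $\psi=\id_{\mcM}$. If $\mcP$ and $\mcP'$ are two $\Z^{\geq 0}$-coverings of $\mcM$, this yields morphisms $\Psi:\mcP\to\mcP'$ and $\Psi':\mcP'\to\mcP$ with $\pp'\circ\Psi=\pp$ and $\pp\circ\Psi'=\pp'$. Then $\pp\circ(\Psi'\circ\Psi)=\pp'\circ\Psi=\pp$, and $\id_{\mcP}$ likewise satisfies $\pp\circ\id_{\mcP}=\pp$. Thus $\Psi'\circ\Psi$ and $\id_{\mcP}$ are both lifts of the morphism $(\pp^{(k)}):\mcP\to\mcM$ through the covering $\pp:\mcP\to\mcM$ of $\mcM$, so the uniqueness clause of Theorem \ref{theor univ_property supermanifold} forces $\Psi'\circ\Psi=\id_{\mcP}$; symmetrically $\Psi\circ\Psi'=\id_{\mcP'}$. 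Hence $\Psi$ is an isomorphism and the covering is unique up to isomorphism.

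I expect the only genuine point requiring care to be the verification that the level-wise composite $(\psi\circ\pp^{(k)})$ is again a legitimate morphism of $\Z_2$-graded locally ringed spaces forming a compatible family, so that Definition \ref{def morphism N to M} applies; everything else is a formal application of the universal property and of its uniqueness clause. No separate check of gradedness of $\Psi$ is needed, since it is provided directly by Theorem \ref{theor univ_property supermanifold}.
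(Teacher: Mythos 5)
Your overall strategy is the same as the paper's: factor the compatible family $(\psi\circ\pp^{(k)})$ through $\tilde\pp$ using the universal property, and obtain uniqueness of the covering by specializing to $\psi=\id$. Your verification that $(\psi\circ\pp^{(k)})_{k\geq 0}$ is a compatible family of $\Z_2$-graded local morphisms is correct and is exactly the paper's starting point.

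There is, however, one genuine gap: you invoke Theorem \ref{theor univ_property supermanifold} with $\mcN=\mcP$, but that theorem is stated for a graded manifold $\mcN$ of finite degree (its conclusion produces morphisms $\Psi_k:\mcN\to\mcP_k$ from a single finite-degree $\mcN$), whereas $\mcP=\varprojlim\mcP_n$ is an infinite-dimensional inverse limit. The paper avoids this by working level by level: it uses the isomorphism $\mcP^{(k)}\simeq\mcP_n^{(k)}$ for $n\geq k$ together with Remark \ref{rem p_n:mcP^(n) to mvM} (the truncated version of the universal property, applicable to a morphism $\mcN^{(n)}\to\mcM$ that need not come from a morphism $\mcN\to\mcM$) to produce the lifts $\Psi^{(k)}:\mcP^{(k)}\to\tilde\mcP^{(k)}$. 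Your claim that ``no separate check of gradedness of $\Psi$ is needed'' is precisely where the missing work lies: having the family $(\Psi^{(k)})$, one must still check that it is compatible under the projections $\pr^{k+1}_k$ (which the paper does by the uniqueness clause), and that each $(\Psi^{(k)})^*$ is $\Z$-graded and carries $\mcO_{\tilde\mcP_n}$ into $\mcO_{\mcP_n}$ for $n\leq k$ --- this is the argument around (\ref{eq tecjnical 2}) --- so that the $\Psi^{(k)}$ assemble into morphisms $\Psi_n:\mcP_n\to\tilde\mcP_n$ of finite-degree graded manifolds and hence into a morphism $\Psi=\varprojlim\Psi_n$ of inverse limits in the sense of Section \ref{sec inverse limit}. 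Once this assembly step is supplied, the rest of your argument, including the uniqueness of the covering, goes through as written.
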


\begin{proof}
	Consider the morphism $(\psi\circ \pp)^{(k)}: \mcP^{(k)} \to \tilde\mcM$. Recall that $ \mcP^{(k)} \simeq  \mcP^{(k)}_n$, where $n\geq k$.  By Remark \ref{rem p_n:mcP^(n) to mvM} there exists unique morphism $\Psi^{(k)} : \mcP^{(k)} \to \tilde\mcP^{(k)}$ for any $k\geq 0$. Let us show that the family $(\Psi^{(k)})$ is compatible. Indeed,  if 
	$$
	\pr^{k+1}_k: \mcO_{\mcP}/\mcI_{k+1} \to \mcO_{\mcP}/\mcI_{k}
	$$ 
	is the natural morphism, then both morphisms $(\Psi^{(k)})^*$ and $\pr^{k+1}_k\circ (\Psi^{(k+1)})^*$ make Diagram (\ref{eq commut diagramm univ prop}) commutative. Hence, 
	$$
	(\Psi^{(k)})^* = \pr^{k+1}_k\circ (\Psi^{(k)})^*.
	$$

	Further, since any $(\Psi^{(k)})^*$ is $\Z$-graded, we have 
	$$
	(\Psi^{(k)})^* \big((\mcO_{\tilde{\mathcal P}})_n\big) \subset (\mcO_{\mathcal P})_n, \quad n\leq k. 
	$$
Since $(\mcO_{\tilde{\mathcal P}})_n = (\mcO_{\tilde{\mathcal P}_n})_n $	and $(\mcO_{{\mathcal P}})_n = (\mcO_{{\mathcal P}_n})_n $ for any $n\geq 0$, we have 
\begin{equation}\label{eq tecjnical 2}
(\Psi^{(k)})^* \big((\mcO_{\tilde{\mathcal P}_n})_n\big) \subset (\mcO_{\mathcal P_n})_n, \quad n\leq k.
\end{equation}
Let us take a local coordinate $\zeta^r$ in $\mcO_{\tilde{\mathcal P}_r}$ of degree $r$. By (\ref{eq tecjnical 2}), we see that $(\Psi^{(k)})^* (\zeta^r) \subset \mcO_{\mathcal P_r^{(k)}}$, where $r\leq k$. Since $r$ is any, we get
$$
(\Psi^{(k)})^* \big(\mcO_{\tilde{\mathcal P}^{(k)}_n}\big) \subset \mcO_{\mathcal P_n^{(k)}}, \quad \text{for any}\,\,\, k. 
$$
In other words, we get a compatible family of morphisms $(\Psi^{(k)}: \mcP_n^{(k)} \to \tilde\mcP_n^{(k)})$ of $\Z$-graded ringed spaces. Such a family is induced by a morphism $\Psi_n: \mcP_n \to \tilde\mcP_n$. Now we put $\Psi= \varprojlim \Psi_n$.

To show uniqueness of $\mcP$, we put $\tilde \mcM=\mcM$ and $\psi=\id$. 
\end{proof}

Let us study the universal property (\ref{eq commut diagramm univ prop}) in more details. Denote by $\tilde \mcP$ a graded manifold, if there exists, satisfying the universal property 
(\ref{eq commut diagramm univ prop}). Let us prove that that satisfy Definition \ref{def new-covering of supermanifold}. 

\begin{theorem}\label{theor def of cov using univ prop}
If there exists a graded manifold 	$\tilde \mcP$ together with a morphism $\tilde \pp:\tilde \mcP \to \mcM$  satisfying the universal property  (\ref{eq commut diagramm univ prop}), then it satisfies Definition \ref{def new-covering of supermanifold}. 
\end{theorem}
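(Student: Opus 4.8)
The strategy is to recognize that the universal property characterizes $\tilde{\mcP}$ up to canonical isomorphism, and to compare it chart by chart with the obvious local model, which manifestly satisfies Definition \ref{def new-covering of supermanifold}. First I would reduce to the local situation: cover $\mcM_0$ by superdomain charts $\mcU_\lambda$ with coordinates $(x_a,\xi_b)$, set $W_\lambda := \tilde\pp_0^{-1}((\mcU_\lambda)_0)$, and let $\mcV_\lambda = \varprojlim (\mcV_\lambda)_n$ be the standard graded domain over $(\mcU_\lambda)_0$ with free graded coordinates $(Y^s_a,E^t_b)$ (even weights $s$, odd weights $t$), equipped with the standard projection $q:\mcV_\lambda\to\mcU_\lambda$ given schematically by $q^*(x_a)=\sum_s Y^s_a$ and $q^*(\xi_b)=\sum_t E^t_b$ as in (\ref{eq series for p^*}). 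By construction $\mcV_\lambda$ together with $q$ satisfies Definition \ref{def new-covering of supermanifold} over $\mcU_\lambda$, its adapted coordinates being the $(Y^s_a,E^t_b)$ themselves; hence by Theorem \ref{theor univ_property supermanifold} it satisfies the universal property (\ref{eq commut diagramm univ prop}) locally.

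Next I would produce a local isomorphism $\tilde{\mcP}|_{W_\lambda}\cong\mcV_\lambda$ over $\mcM$ by the standard uniqueness-of-universal-objects argument. Applying the universal property of $\tilde{\mcP}$ to the morphism $q:\mcV_\lambda\to\mcM$ yields a unique $\Psi:\mcV_\lambda\to\tilde{\mcP}|_{W_\lambda}$ with $\tilde\pp\circ\Psi=q$; applying the (local) universal property of $\mcV_\lambda$ to $\tilde\pp|_{W_\lambda}:\tilde{\mcP}|_{W_\lambda}\to\mcM$ yields a unique $\Phi$ with $q\circ\Phi=\tilde\pp|_{W_\lambda}$. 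Then $\tilde\pp\circ(\Psi\circ\Phi)=\tilde\pp|_{W_\lambda}=\tilde\pp\circ\id$ and $q\circ(\Phi\circ\Psi)=q=q\circ\id$, so the uniqueness clauses force $\Psi\circ\Phi=\id$ and $\Phi\circ\Psi=\id$. Here I would invoke Remark \ref{rem p_n:mcP^(n) to mvM} to run these factorizations level by level, so that $\Phi,\Psi$ respect the truncations and are genuine morphisms of the inverse limits. The needed locality of the universal property, namely that a morphism to $\mcM$ with image in $\mcU_\lambda$ factors through the restriction $\tilde{\mcP}|_{W_\lambda}$, follows because morphisms and their factorizations are determined locally by Theorems \ref{theor Leites analog psi is unique} and \ref{theor Leites analog psi is existence}.

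It then remains to read off adapted coordinates. Setting $y^s_a:=\Phi^*(Y^s_a)$ and $\eta^t_b:=\Phi^*(E^t_b)$, the isomorphism $\Phi$ shows that $(y^s_a,\eta^t_b)$ are graded coordinates on $\tilde{\mcP}|_{W_\lambda}$, adapted in the sense of Section \ref{sec inverse limit} because $\Phi$ is compatible with all truncations. Since $\Phi^*$ is $\Z$-graded and $q\circ\Phi=\tilde\pp$, while $Y^s_a=\pr_s\circ q^*(x_a)$, I compute $y^s_a=\Phi^*\pr_s q^*(x_a)=\pr_s\Phi^* q^*(x_a)=\pr_s\circ(\tilde\pp^{(k)})^*(x_a)$, and likewise $\eta^t_b=\pr_t\circ(\tilde\pp^{(k)})^*(\xi_b)$, which are exactly the defining relations of Definition \ref{def new-covering of supermanifold}. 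In particular the base component of $\Phi$ equals $\tilde\pp_0|_{W_\lambda}:W_\lambda\to(\mcU_\lambda)_0$, which is therefore an isomorphism of the underlying manifolds; letting $\lambda$ vary this identifies $\tilde{\mcP}_0$ with $\mcM_0$. Finally, because the coordinates $y^s_a=\pr_s\circ\tilde\pp^*(x_a)$ are defined intrinsically from $\tilde\pp$, independently of the auxiliary model $\mcV_\lambda$, they agree on the overlaps $W_\lambda\cap W_\mu$, so the charts assemble into a global adapted atlas and $\tilde{\mcP}$ satisfies Definition \ref{def new-covering of supermanifold}.

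The main obstacle I anticipate is not the formal uniqueness argument but the bookkeeping that makes it legitimate: verifying that the abstract universal property, which is phrased for the inverse-limit objects $\tilde{\mcP}^{(k)}$, can be localized to the charts $W_\lambda$ and applied level by level via Remark \ref{rem p_n:mcP^(n) to mvM}, so that the resulting $\Phi$ is an isomorphism of graded manifolds $\tilde{\mcP}_n|_{W_\lambda}\cong(\mcV_\lambda)_n$ at every finite degree $n$, and not merely an abstract bijection on the limit. Keeping the $\Z$-grading and the truncation tower synchronized throughout is where the care is required.
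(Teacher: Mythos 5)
Your proposal is correct and follows essentially the same route as the paper: build the standard local covering $\mcV_\lambda\to\mcU_\lambda$ of each superdomain chart, apply the universal property of $\tilde{\mcP}$ and the (already established) universal property of $\mcV_\lambda$ to obtain mutually inverse morphisms, and conclude the local isomorphism. Your final paragraph on reading off adapted coordinates and gluing over overlaps spells out a step the paper leaves implicit, but it is the same argument.
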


\begin{proof}
	Let $\{\mcU_{\al}\}$ be an atlas on $\mcM$. Denote 
	$$
	\tilde \mcV_{\al}:= (\tilde\pp^{-1}_0((\mcU_{\al})_0),  \mcO_{\tilde \mcP}|_{\tilde\pp^{-1}_0((\mcU_{\al})_0)}). 
	$$
For any superdomain $\mcU_{\al}$ by Theorem \ref{theor construction of a covering domain} we can construct its covering $\pp:\mcV_{\al}\to \mcU_{\al}$. Consider the following morphisms $\pp_k:(\mcV_{\al})_{2k+1}\to \mcU$
\begin{align*}
&	\pp_{k}^*(x_a) =  y_a^0+ y_a^2+ y_a^4+\cdots + y_a^{2k}; \\  
&	\pp_{k}^*(\xi_b) =  \eta^1_b + \eta^3_b + \eta^5_b + \cdots  + \eta_b^{2k+1},
\end{align*}
where $(x_a,\xi_b)$ are coordinates of $\mcU_{\al}$. 
We can apply the universal property of $\tilde \mcP$ to any $\pp_k$ and obtain the following commutative diagrams
$$
\begin{tikzcd}
&\tilde \mcV_{\al} \subset \tilde\mcP^{(k)}  \arrow[d, "\tilde\pp^{(k)}"]\\
(\mcV_{\al})^{(k)}=(\mcV_{\al})_{2k+1}^{(k)} \arrow[ur, dashrightarrow, "\exists ! P^{(k)}"]   \arrow[r, "\pp_k^{(k)}"]  & \mathcal M,
\end{tikzcd}
$$
The family $P^{(k)}$, $k\geq 0$, determines a morphism $P:\mcV_{\al}\to\tilde \mcV_{\al}$ of graded domains. Further we use the fact that $\pp:\mcV_{\al}\to \mcU_{\al}$ satisfies the universal property as well, see Theorem \ref{theor univ_property supermanifold}, and repeat a similar argument. We get a morphism $Q: \tilde \mcV_{\al} \to \mcV_{\al}$.  Now we see that $Q\circ P:\mcV_{\al} \to \mcV_{\al}$ is a cover of $\id:\mcU_{\al} \to \mcU_{\al}$, and the same for the graded morphism $P\circ Q: \tilde \mcV_{\al} \to \tilde \mcV_{\al}$. Therefore, $Q\circ P=\id$ and $P\circ Q=\id$, hence $\tilde \mcV_{\al} \simeq \mcV_{\al}$.
\end{proof}

 We finalize this section with the following observation.  

\begin{remark} Let $\mcP$ be a covering of a supermanifold $\mcM$ of dimension $m|n$. Definition \ref{def new-covering of supermanifold} implies that in some sense $\mathcal P$ is locally diffeomorphic to $\mcM$. Indeed, let us choose $q\in \Zo$ assuming for example that $q$ is even. Then the sheaf morphism $\pr_q\circ\pp^*$ determines a sheaf isomorphism of $S^*(x_1,\ldots, x_m)|_{(\mathcal U_i)_0}\subset \mathcal F_{\mcM_0}$ and $S^*(y^q_1,\ldots, y^q_m)|_{(\mathcal U_i)_0}$. Here $S^*(z_1,\ldots, z_m)$ is the supersymmetric algebra generated by $z_1,\ldots, z_m$. And similarly in the case of odd coordinates. In other words we have the following isomorphism of superdomains
	\begin{align*}
	((\mathcal U_i)_0, S^*(x_a)) \simeq  ((\mathcal V_i)_0, S^*(y^s_a)), \quad ((\mathcal U_i)_0, S^*(\xi_j)) \simeq  ((\mathcal V_i)_0, S^*(\eta^t_b)),
	\end{align*}
	where we denoted for brevity $S^*(z_a) =S^*(z_1,\ldots, z_m)$.
\end{remark}

\section{A construction of the universal covering of a superdomain}

\subsection{A covering for a superdomain}
To construct a covering space for a superdomain $\K^{p|q}$ we use Definition \ref{def new-covering of supermanifold}.  Let $(x_a,\xi_b)$ be a coordinate system in a superdomain $\mcU\subset \K^{p|q}$. By Definition \ref{def new-covering of supermanifold} a covering space of $\mathcal U\subset \K^{p|q}$ is a graded domain $\mcV$ with the base space $\mcV_0=\mcU_0$ and with coordinates $(y_a^t,\eta_b^s)$, where $s$ is an even integer and  $t$ is an odd integer. (We assume that the underlying space $\mcU_0$ is sufficiently small.) The covering maps $(\pp^{(k)})$ are given by (\ref{rem series for p^*}), see also (\ref{eq series for p^*}), in local coordinates. By Theorem \ref{theor Leites analog psi is existence}, we get a compatible family $(\pp^{(k)}: \mcV^{(k)} \to \mcM)$.  More precisely, let us take $F\in \mcO_{\K^{p|q}}$ than for any $k\geq 0$ we have
\begin{equation}\label{eq p^*_k(F) Taylor series}
\begin{split}
(\pp^{(k)})^* (F(x_a,\xi_b)) =
F(y^0_a +y_a^2+\cdots + y_a^{2v}, \eta_b^1+\eta_b^3 + \cdots + \eta_b^{2v+1} ) \mod \mathcal I_k,
\end{split}
\end{equation}
where $v>k$ is any integer. The right hand side of (\ref{eq p^*_k(F) Taylor series}) is defined by Taylor series.

Denote by $\mathcal V_n$ the graded domain with coordinates $(y_a^s,\eta^t_b)_{s\leq n,t\leq n}$. We summarize all of the above in the following theorem. 

\begin{theorem}\label{theor construction of a covering domain}
The graded manifold $\mcV= \varprojlim \mcV_n$ together with the family of morphisms $\pp^{(k)} : \mcV^{(k)}\to \mcU$, where $k\in \Z^{\geq 0}$, defined by Formula (\ref{eq p^*_k(F) Taylor series})  satisfy Definition \ref{def new-covering of supermanifold}. In other words, $\pp:\mcV\to \mcU$ is a $\Z^{\geq 0}$-covering of $\mcU$. 
\end{theorem}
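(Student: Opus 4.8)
The plan is to reduce the whole statement to the existence result, Theorem \ref{theor Leites analog psi is existence}, exploiting the fact that a superdomain carries a single chart and hence presents no gluing or transition-function issues. First I would check that $\mcV = \varprojlim \mcV_n$ is a well-defined infinite dimensional graded manifold with $\mcV_0 = \mcU_0$. Each $\mcV_n$ is the graded domain with coordinates $(y_a^s,\eta^t_b)_{s,t\leq n}$ over the fixed base $\mcU_0$, and the projection $\pr^n_{n-1}:\mcV_n\to\mcV_{n-1}$ is the natural one forgetting the coordinates of weight $n$. Thus $(\mcV_n)_n$ is a compatible family in the sense of Section \ref{sec inverse limit}, so the inverse limit $\mcV$ and its $\Z$-graded structure sheaf $\mcO_{\mcV}=\varinjlim \mcO_{\mcV_n}$ are defined, with underlying space $\mcU_0$ as required by Definition \ref{def new-covering of supermanifold}.

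Next, to produce the covering morphisms, I would feed the prescribed coordinate images into Theorem \ref{theor Leites analog psi is existence}. Setting $(x^*_a)^{(k)} := y_a^0+y_a^2+\cdots \mod \mcI_k$ and $(\xi^*_b)^{(k)} := \eta^1_b+\eta^3_b+\cdots \mod \mcI_k$, I would verify the three conditions (\ref{eq x^* xi^*}): the parity condition holds because even-weight $y$'s are even and odd-weight $\eta$'s are odd; the compatibility $\pr^{k+1}_k((x^*_a)^{(k+1)}) = (x^*_a)^{(k)}$ holds because the projection simply drops the weight-$(k+1)$ term; and the base condition $(x^*_a)^{(k)}(v)\in\mcU_0$ holds because evaluation at a point of $\mcV_0=\mcU_0$ retains only the weight-$0$ part $y_a^0$, which is a standard coordinate on the common base. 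Theorem \ref{theor Leites analog psi is existence} then yields a unique compatible family of $\Z_2$-graded morphisms $\pp^{(k)}:\mcV^{(k)}\to\mcU$, and by the Taylor-series recipe used in its proof this family is precisely the one described by Formula (\ref{eq p^*_k(F) Taylor series}).

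Finally I would confirm the defining coordinate conditions of Definition \ref{def new-covering of supermanifold}. Applying the projections $\pr_s$ and $\pr_t$ to the images in (\ref{eq series for p^*}) gives at once $\pr_s\circ(\pp^{(k)})^*(x_a) = y_a^s$ and $\pr_t\circ(\pp^{(k)})^*(\xi_b)=\eta^t_b$ for $s,t\leq k$, where $(y_a^s,\eta^t_b)$ are adapted graded coordinates on the single chart $\mcV$. Since $\mcU$ is a domain there is only one chart and no transition functions to reconcile, so these local identities are exactly what the definition demands, and the conclusion $\pp:\mcV\to\mcU$ is a $\Z^{\geq 0}$-covering follows.

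The computation is short, and essentially no genuine obstacle remains: the only delicate point is the well-definedness of the right-hand side of (\ref{eq p^*_k(F) Taylor series}) as an element of $\mcO_{\mcV}/\mcI_k$, namely that after separating the weight-$0$ part $y^0_a$ and treating the remaining positive-weight terms as a nilpotent variation modulo $\mcI_k$, the Taylor expansion of $F$ terminates. This mechanism is already supplied by the proof of Theorem \ref{theor Leites analog psi is existence}, so the present statement is in effect a verification that the explicit construction meets every clause of Definition \ref{def new-covering of supermanifold}.
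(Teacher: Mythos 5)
Your proposal is correct and follows essentially the same route as the paper: the paper's argument is exactly the discussion preceding the theorem, namely that the coordinate series $(x^*_a)^{(k)}=y_a^0+y_a^2+\cdots \mod \mcI_k$ and $(\xi^*_b)^{(k)}=\eta_b^1+\eta_b^3+\cdots \mod \mcI_k$ satisfy the hypotheses of Theorem \ref{theor Leites analog psi is existence}, which produces the compatible family $(\pp^{(k)})$ via the Taylor-series recipe, and the clauses of Definition \ref{def new-covering of supermanifold} are then read off on the single chart. Your write-up merely makes explicit the verification of conditions (\ref{eq x^* xi^*}) that the paper leaves implicit.
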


\subsection{Examples}
Let us consider some examples. We start with the simplest non-trivial supermanifold.

\begin{example}
	Consider the case of pure odd supermanifold $\mcM=(\pt, \bigwedge(\xi))$ of dimension $0|1$. In this case $\mcV = (\pt, \bigwedge(\eta^1,\eta^3,\ldots)$ and we have
	$$
	\pp^*(F) = F(\pt) + \frac{\partial  F}{\partial \xi}(\pt) \eta^1 + \frac{\partial  F}{\partial \xi}(\pt) \eta^3 + \cdots . 
	$$
	This formula implies that $\pp^*(const) =const $ and $\pp^*(\xi) = \eta^1+\eta^3+\cdots$. 
\end{example}

\begin{example}
	Consider the case of pure odd two dimensional supermanifold $\mcM=(\pt, \bigwedge(\xi_1,\xi_2))$. In this case $\mcV = (\pt, \bigwedge(\eta^1_1,\eta^3_1,\ldots, \eta^1_2,\eta^3_2,\ldots))$  and we have
	$$
	\pp^*(F) = F(\pt) + \sum_{i=1,2}\sum_{q=0}^{\infty}  \frac{\partial  F}{\partial \xi_i}(\pt) \eta^{2q+1}_i +  \sum_{q=1}^{\infty} \sum_{q_1+q_2= 2q}\frac{\partial^2  F}{\partial \xi_1 \partial \xi_2}(\pt)\eta^{q_1}_1 \eta^{q_2}_2,
	$$
	where $q_1$ and $q_2$ are odd non-negative integers and $F\in \bigwedge(\xi_1,\xi_2)$. 
	This formula implies that $\pp^*(const) =const$ and $\pp^*(\xi_i) = \eta^1_i+\eta^3_i+\eta^5_i+\cdots$ for $i=1,2$. 
\end{example}

\begin{example}
	Consider the case of pure even $1|0$-dimensional supermanifold $\mcM=\mathbb R$ or $\mathbb C$. In other words $\mcM$ is a usual $1$-dimensional manifold. Clearly $\mcM$ can be covered by one chart with local coordinate $x$. In this case a $\Zo$-covering $\mcV$ is the graded domain with graded coordinates  $(y^s)$, where $s\geq 0$ is even. In this case we can rewrite (\ref{eq p^*_k(F) Taylor series}) in the following way for any $F\in \mcO_{\mcM}$
	\begin{align*}
	\pp^*(F(x)) = &F(y^0+y^2+y^4+\cdots ) =\\
	 &=F(y^0) + \frac{\partial F}{\partial x}(y^0) (y^2+ y^4+\cdots ) +
	 \frac{1}{2!} \frac{\partial^2 F}{\partial x^2}(y^0) (y^2+ y^4+\cdots )^2 + \cdots .
	\end{align*}
  For instance,
	\begin{align*}
	&\pr_0\circ\pp^*(F) =F(y^0); \\
	&\pr_2\circ\pp^*(F) =  \frac{\partial F}{\partial x}(y^0) y^2;\\
	&\pr_4\circ\pp^*(F) = \frac{\partial F}{\partial x}(y^0) y^4 + \frac{1}{2!} \frac{\partial^2 F}{\partial x^2}(y^0) (y^2)^2;\\
	&\pr_6\circ\pp^*(F) =  \frac{\partial F}{\partial x}(y^0) y^6 + \frac{\partial^2 F}{\partial x^2} y^2y^4 + \frac{1}{3!}  \frac{\partial^3 F}{\partial x^3}(y^0) (y^2)^3 ;\\
	&\cdots .
	\end{align*}
For instance wee see that $\pp^*(x) = y^0+y^2+y^4+\cdots$. Later we will give a geometric description of this graded manifold.
\end{example}

\begin{example}
Consider the case of $1|2$-dimensional supermanifold $\mcM=(\K, \mcF_{\K}\otimes\bigwedge(\xi_1,\xi_2))$, where $\K=\R$ or $\C$. Let us describe first homogeneous terms. We have for any $F\in \mcO_{\mcM}$
\begin{align*}
\pr_0\circ\pp^*(F(x_a,\xi_b)) =&F(y^0_a,0); \\
\pr_1\circ\pp^*(F(x_a,\xi_b)) = & \frac{\partial F}{\partial \xi_1}(y^0_a,0) \eta^1_1+ \frac{\partial F}{\partial \xi_2}(y^0_a,0) \eta^1_2;\\
\pr_2\circ\pp^*(F(x_a,\xi_b)) = &\frac{\partial F}{\partial x}(y^0_a,0) y^2 + \frac{\partial^2 F}{\partial \xi_1 \partial \xi_2}(y^0_a,0) \eta^1_1 \eta^1_2;\\
\pr_3\circ\pp^*(F(x_a,\xi_b)) =  &\frac{\partial F}{\partial \xi_1}(y^0_a,0) \eta_1^3 + \frac{\partial F}{\partial \xi_2}(y^0_a,0) \eta_2^3 + 
 \frac{\partial^2 F}{\partial x \partial \xi_1}(y^0_a,0) y^2\eta^1_1  +\\ 
 &\frac{\partial^2 F}{\partial x \partial \xi_2}(y^0_a,0) y^2\eta^1_2 ;\\
&\cdots .
\end{align*}
\end{example}

\subsection{Functorial properties of $\Zo$-coverings for superdomains}

If $\mathcal U$ is a superdomain with coordinates $(x_a,\xi_b)$, we denote $\F(\mathcal U):=\mcV$ the graded domain constructed above with coordinates $(y_a^t,\eta_b^s)$, where $s$ is an even non-negative integer and  $t$ is an odd non-negative integer.

Further let $\psi:\mathcal U \to  \mathcal U'$ be a morphism of superdomains. By Theorem \ref{theor morphism psi can be covered by Psi} there exists unique $\Psi: \mcV\to \mcV'$ such that $\pp'\circ \Psi = \psi\circ \pp$, where $\pp': \mcV'\to \mcU'$ is a $\Zo$-covering of $\mcU'$. We put $\F(\psi):=\Psi$. Summing up, we constructed a mapping $\F$ from the category of superdomains to the category of graded domains $\mcV= \varprojlim \mcV_n$. If $\psi=\id$, then clearly $\Psi=\id$.

\begin{example}[Coordinate description of $\F(\psi)$]
Assume that the superdomain $\mathcal U$ and $\mathcal U'$ have  coordinates $(x_a,\xi_b)$ and $(u_c,\theta_d)$, respectively. Let $\mcV= \F(\mathcal U)$ with coordinates $(y_a^t,\eta_b^s)$ and  $\mcV'= \F(\mathcal U')$ with coordinates $(z_c^t,\zeta_d^s)$ be as above. Any morphism $\psi:\mathcal U \to  \mathcal U'$ is determined by the images of local coordinates:
	\begin{align*}
	\psi^*(u_c) = F_c(x_a,\xi_b),\quad \psi^*(\theta_d) = G_d(x_a,\xi_b). 
	\end{align*}
	 For the morphism $\Psi:\mathcal V \to  \mathcal V'$ in coordinates we have
	\begin{equation}\label{eq F of a morphism in coordinates}
	\begin{split}
	&z_c^t = \pr_t\circ \pp^*(\psi^*(u_c) ) = \pr_t \circ\pp^*(F_c(x_a,\xi_b)); \\ 
	&\zeta_d^s = \pr_s\circ \pp^*(\psi^*(\theta_d) ) = \pr_s \circ\pp^* (G_d(x_a,\xi_b)).
	\end{split}
	\end{equation}
\end{example}

\begin{theorem}\label{theor F(Psi circ Phi) = F(Psi) circ F(Phi)}
Let we have three superdomains $\mcU$, $\mcU'$ and $\mcU''$ and two moprphisms $\psi:\mcU \to \mcU' $  and $\psi':\mcU' \to \mcU''$. Then we have
$$
\F(\psi' \circ \psi) = \F(\psi') \circ\F (\psi).
$$
\end{theorem}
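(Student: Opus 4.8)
The plan is to prove functoriality $\F(\psi'\circ\psi) = \F(\psi')\circ\F(\psi)$ by exploiting the uniqueness clause in the universal property of a $\Zo$-covering (Theorem~\ref{theor morphism psi can be covered by Psi}). The key observation is that both $\F(\psi'\circ\psi)$ and $\F(\psi')\circ\F(\psi)$ are morphisms $\mcV\to\mcV''$ fitting into the same commutative square over $\psi'\circ\psi$, so they must coincide.

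\medskip

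First I would set up notation: write $\pp:\mcV\to\mcU$, $\pp':\mcV'\to\mcU'$ and $\pp'':\mcV''\to\mcU''$ for the three covering projections, where $\mcV=\F(\mcU)$ etc. By definition of $\F$ on morphisms, $\Psi:=\F(\psi)$ is the unique morphism with $\pp'\circ\Psi = \psi\circ\pp$, and $\Psi':=\F(\psi')$ is the unique morphism with $\pp''\circ\Psi' = \psi'\circ\pp'$. Next I would compute the composite $\Psi'\circ\Psi$ against $\pp''$:
\begin{equation*}
\pp''\circ(\Psi'\circ\Psi) = (\pp''\circ\Psi')\circ\Psi = (\psi'\circ\pp')\circ\Psi = \psi'\circ(\pp'\circ\Psi) = \psi'\circ(\psi\circ\pp) = (\psi'\circ\psi)\circ\pp.
\end{equation*}
Thus $\Psi'\circ\Psi$ is a morphism $\mcV\to\mcV''$ of $\Z$-graded manifolds making the covering square over $\psi'\circ\psi$ commute. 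On the other hand, $\F(\psi'\circ\psi)$ is \emph{defined} to be the unique such morphism. By the uniqueness assertion of Theorem~\ref{theor morphism psi can be covered by Psi}, these two morphisms agree, which is exactly the claim.

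\medskip

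The one point requiring care is that the uniqueness in Theorem~\ref{theor morphism psi can be covered by Psi} is stated for morphisms of $\Z$-graded manifolds of the form $\varprojlim$ of finite-degree pieces, so I must check that $\Psi'\circ\Psi$ is genuinely a morphism in this category and not merely a formal composite of sheaf maps. Since $\Psi=\varprojlim\Psi_n$ and $\Psi'=\varprojlim\Psi'_n$ are each compatible families of graded-manifold morphisms, their componentwise composite $\Psi'_n\circ\Psi_n:\mcV_n\to\mcV''_n$ is again a compatible family, hence defines $\Psi'\circ\Psi=\varprojlim(\Psi'_n\circ\Psi_n)$ as a legitimate morphism $\mcV\to\mcV''$; this uses only that composition of $\Z$-graded morphisms is $\Z$-graded and commutes with the projections $\pr^{k+1}_k$.

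\medskip

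I expect the main (and only real) obstacle to be purely bookkeeping: verifying that all the intermediate objects live in the correct category so that Theorem~\ref{theor morphism psi can be covered by Psi} applies verbatim, rather than any substantive geometric computation. Optionally, for concreteness, one could instead verify the identity directly on coordinates using the formula \eqref{eq F of a morphism in coordinates}, checking that $\pr_t\circ\pp^*\big((\psi'\circ\psi)^*(w_e)\big)$ equals the coordinate expression for the composite $\F(\psi')\circ\F(\psi)$; but the abstract uniqueness argument is cleaner and avoids grinding through the Taylor-expansion chain rule.
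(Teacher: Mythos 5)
Your proof is correct and follows exactly the paper's own argument: both $\F(\psi')\circ\F(\psi)$ and $\F(\psi'\circ\psi)$ cover $\psi'\circ\psi$, so they coincide by the uniqueness clause of Theorem~\ref{theor morphism psi can be covered by Psi}. Your additional verification that the composite is a legitimate morphism of inverse-limit graded manifolds is a sensible bit of bookkeeping that the paper leaves implicit.
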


\begin{proof} 
The statement follows from Theorem 	\ref{theor morphism psi can be covered by Psi}. Indeed, $\F(\psi') \circ\F (\psi)$ and $\F(\psi' \circ \psi) $ both cover $\psi'\circ\psi$. Since the covering of $\psi'\circ\psi$ in unique by Theorem 	\ref{theor morphism psi can be covered by Psi}, we get the result. 
\end{proof}

\begin{corollary}
	The mapping $\F$ is a functor from the category of superdomains to the category of graded domains $\mcV= \varprojlim \mcV_n$. 
\end{corollary}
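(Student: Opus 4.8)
The plan is to verify directly the two defining axioms of a functor, since all the substantive work has already been carried out in the preceding theorems. Recall that $\F$ has been specified on objects, sending a superdomain $\mcU$ with coordinates $(x_a,\xi_b)$ to the graded domain $\F(\mcU) = \mcV = \varprojlim \mcV_n$ furnished by Theorem~\ref{theor construction of a covering domain}, and on morphisms, sending $\psi:\mcU\to\mcU'$ to the lift $\F(\psi) := \Psi:\mcV\to\mcV'$. The first thing I would record is that this assignment on morphisms is well defined and lands in the correct category: Theorem~\ref{theor morphism psi can be covered by Psi} guarantees both the existence and the uniqueness of a morphism $\Psi$ of $\Z$-graded manifolds making the square $\pp'\circ\Psi = \psi\circ\pp$ commute, so $\F(\psi)$ is an unambiguous morphism between graded domains of the form $\varprojlim\mcV_n$, in the sense of Section~\ref{sec inverse limit}.

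Next I would check preservation of identity morphisms. This is immediate from the uniqueness clause of Theorem~\ref{theor morphism psi can be covered by Psi}: when $\psi = \id_{\mcU}$, the identity $\id_{\mcV}$ already satisfies $\pp\circ\id_{\mcV} = \id_{\mcU}\circ\pp$, and since the lift covering $\id_{\mcU}$ is unique, we conclude $\F(\id_{\mcU}) = \id_{\F(\mcU)}$. This coincides with the observation made in the paragraph defining $\F$.

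Finally, preservation of composition is precisely the content of Theorem~\ref{theor F(Psi circ Phi) = F(Psi) circ F(Phi)}, which asserts $\F(\psi'\circ\psi) = \F(\psi')\circ\F(\psi)$ for any composable pair $\psi:\mcU\to\mcU'$, $\psi':\mcU'\to\mcU''$. Assembling the three observations---well-definedness, the identity case, and the composition law---establishes that $\F$ is a functor.

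I do not anticipate a genuine obstacle, as the statement is a bookkeeping consequence of the two preceding theorems together with the trivial identity case; there is no new computation to perform. If any point merits attention, it is only confirming that the target is a bona fide category, i.e.\ that morphisms of the inverse limits $\varprojlim\mcV_n$ compose associatively with well-behaved identities, but this is built into the definition of a morphism of such inverse limits recalled in Section~\ref{sec inverse limit}, so it adds nothing essential to the argument.
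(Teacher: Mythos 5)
Your proposal is correct and follows essentially the same route as the paper, which simply invokes Theorem~\ref{theor F(Psi circ Phi) = F(Psi) circ F(Phi)} for composition and the earlier remark that $\F(\id)=\id$ by uniqueness of lifts. You merely spell out the bookkeeping (well-definedness and the identity case) that the paper leaves implicit.
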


\begin{proof}
	Apply Theorem \ref{theor F(Psi circ Phi) = F(Psi) circ F(Phi)}. 
\end{proof}

\section{A construction of a $\Z^{\geq 0}$-covering of a supermanifold}

The construction is based on Definition \ref{def new-covering of supermanifold} and a construction of a $\Z^{\geq 0}$-covering for superdomain. The idea is to glue together $\Z^{\geq 0}$-coverings for superdomains and obtain a graded manifold. 

\begin{theorem}\label{theor construction of a covering for any supermanifld}
	For any supermanifold $\mcM$  over $\K=\R$ or $\C$ there exists  a $\Zo$-covering $\pp: \mcP\to \mcM$ of $\mcM$. 
\end{theorem}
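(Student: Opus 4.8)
The plan is to construct $\mcP$ by gluing the local coverings produced in Theorem~\ref{theor construction of a covering domain}, using the naturality of the functor $\F$ established in Theorem~\ref{theor F(Psi circ Phi) = F(Psi) circ F(Phi)} to obtain well-defined transition data. First I would fix an atlas $\{\mcU_\lambda\}$ on $\mcM$ with coordinates $(x_a^\lambda,\xi_b^\lambda)$ and transition functions $H^{\lambda\mu}_c,\,G^{\lambda\mu}_d$ on each overlap $\mcU_\lambda\cap\mcU_\mu$. Applying the superdomain construction to each $\mcU_\lambda$ gives a graded domain $\mcV_\lambda=\F(\mcU_\lambda)=\varprojlim(\mcV_\lambda)_n$ with adapted coordinates $(y^s_a,\eta^t_b)$ and a local covering $\pp_\lambda:\mcV_\lambda\to\mcU_\lambda$.

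Next I would produce the gluing isomorphisms. On each overlap the coordinate change $\psi^{\lambda\mu}:\mcU_\lambda\cap\mcU_\mu\to\mcU_\mu\cap\mcU_\lambda$ is a morphism of superdomains, so $\F(\psi^{\lambda\mu})$ is a morphism of the corresponding graded domains, explicitly given in the adapted coordinates by Corollary~\ref{corol transition function for a covering}, namely $\tilde y^s_c=\pr_s\circ\pp_\lambda^*(H^{\lambda\mu}_c)$ and $\tilde\eta^t_d=\pr_t\circ\pp_\lambda^*(G^{\lambda\mu}_d)$. I would set the transition morphism $\Phi^{\lambda\mu}:=\F(\psi^{\lambda\mu})$ over $\mcU_\lambda\cap\mcU_\mu$.

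The main obstacle I expect is verifying the cocycle condition $\Phi^{\lambda\nu}=\Phi^{\mu\nu}\circ\Phi^{\lambda\mu}$ on triple overlaps $\mcU_\lambda\cap\mcU_\mu\cap\mcU_\nu$, together with $\Phi^{\lambda\lambda}=\id$, so that the local pieces genuinely glue to a graded manifold $\mcP=\varprojlim\mcP_n$. This is precisely where Theorem~\ref{theor F(Psi circ Phi) = F(Psi) circ F(Phi)} does the work: since the original transition functions satisfy $\psi^{\mu\nu}\circ\psi^{\lambda\mu}=\psi^{\lambda\nu}$ on triple overlaps, functoriality of $\F$ gives $\F(\psi^{\mu\nu})\circ\F(\psi^{\lambda\mu})=\F(\psi^{\lambda\nu})$, which is exactly the required cocycle identity; the normalization $\F(\id)=\id$ handles the diagonal. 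Gluing the $\mcV_\lambda$ along the $\Phi^{\lambda\mu}$ therefore yields a well-defined graded manifold $\mcP$ with underlying space $\mcM_0$.

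Finally I would check that the local covering morphisms assemble into a global one. By construction $\pp_\lambda$ and $\pp_\mu$ agree on overlaps after identification via $\Phi^{\lambda\mu}$, because both $\F(\psi^{\lambda\mu})$ and the coordinate changes on $\mcM$ are compatible with the defining formulas (\ref{eq pp^* without projections pr}); concretely, $\pp_\mu^{(k)}\circ\Phi^{\lambda\mu}$ and $\psi^{\lambda\mu}\circ\pp_\lambda^{(k)}$ coincide by the defining property of $\F$ as a cover of morphisms, so the local maps $\pp_\lambda^{(k)}:\mcV_\lambda^{(k)}\to\mcU_\lambda$ patch to a compatible family $\pp^{(k)}:\mcP^{(k)}\to\mcM$. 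Checking that the resulting coordinates on $\mcP$ are adapted and that the formulas $\pr_s\circ(\pp^{(k)})^*(x_a)=y^s_a$, $\pr_t\circ(\pp^{(k)})^*(\xi_b)=\eta^t_b$ hold in each chart is immediate from Theorem~\ref{theor construction of a covering domain}, so $\pp:\mcP\to\mcM$ satisfies Definition~\ref{def new-covering of supermanifold}, completing the construction.
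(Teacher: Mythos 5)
Your proposal is correct and follows essentially the same route as the paper: glue the local coverings $\F(\mcU_\lambda)$ from Theorem~\ref{theor construction of a covering domain} along the transition morphisms $\F(\psi^{\lambda\mu})$, obtain the cocycle condition from the functoriality established in Theorem~\ref{theor F(Psi circ Phi) = F(Psi) circ F(Phi)}, and patch the local covering maps via the compatibility $\psi^{\lambda\mu}\circ\pp_\lambda = \pp_\mu\circ\F(\psi^{\lambda\mu})$ coming from Theorem~\ref{theor morphism psi can be covered by Psi}. No substantive difference from the paper's argument.
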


\begin{proof}
Let $\{\mcU_i\}$ be an atlas on $\mcM$. By Theorem \ref{theor construction of a covering domain} for any superdomain $\mcU_i$ there exists a $\Zo$-covering $\pp_i:\mcV_i \to \mcU_i$, where $\mcV_i= \F(\mcU_i)$. Consider three charts $\mathcal U_1, \mathcal U_2$ and $\mathcal U_3$ on $\mcM$ with $\mathcal U_1\cap \mathcal U_2\cap \mathcal U_3\ne \emptyset$. Denote by $T_{ij}$ the transition function  $T_{ij}: \mathcal U_i\to \mathcal U_j$.  By Theorem \ref{theor F(Psi circ Phi) = F(Psi) circ F(Phi)} we have
	$$
	\F(T_{31})\circ \F(T_{23})\circ \F( T_{12}) = \F(T_{31}\circ T_{23} \circ  T_{12} ) = \F(\id) =\id.
	$$
Therefore we get a cocycle $\{\F(T_{ij}) \}$ and we can define the graded manifold $\mcP$ using graded domains $\mcV_i$ and transition functions $\F(T_{ij})$. 

It is remaining to define a covering morphism $\pp:\mcP\to \mcM$. Let us prove that  $\pp_i|_{\mcU_i\cap \mcU_j} = \pp_j|_{\mcU_i\cap \mcU_j}$. Indeed,  by Theorem \ref{theor morphism psi can be covered by Psi} and Theorem \ref{theor construction of a covering domain} we have 
$$
T_{ij}\circ\pp_i = \pp_j \circ \F(T_{ij}). 
$$
Now we put $\pp|_{\mcU_i} = \pp_i$ for any $i$. The proof is complete.
\end{proof}

We defined a mapping $\F$, $\mcM \mapsto \mcP$, from the category of supermanifolds to the category of graded manifolds $\mcN= \varprojlim \mcN_n$, see Section \ref{sec inverse limit}. 

\begin{theorem}
	The mapping $\F$ is a functor from the category of supermanifolds to the category of graded manifolds $\mcN= \varprojlim \mcN_n$.  
\end{theorem}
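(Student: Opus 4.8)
The plan is to verify the two functor axioms, preservation of identities and of composition, both of which will follow formally from the existence-and-uniqueness statement of Theorem \ref{theor morphism psi can be covered by Psi}. First I would recall how $\F$ acts. On objects, $\F(\mcM):=\mcP$ is the specific $\Zo$-covering produced in Theorem \ref{theor construction of a covering for any supermanifld} by gluing the superdomain coverings $\F(\mcU_i)$ along the cocycle $\{\F(T_{ij})\}$. On morphisms, given $\psi:\mcM\to\tilde\mcM$ with coverings $\pp:\mcP\to\mcM$ and $\tilde\pp:\tilde\mcP\to\tilde\mcM$, I set $\F(\psi):=\Psi$ to be the morphism of $\Z$-graded manifolds with $\tilde\pp\circ\Psi=\psi\circ\pp$ supplied by Theorem \ref{theor morphism psi can be covered by Psi}; the same theorem guarantees that this $\Psi$ is the only such lift, so $\F$ is single-valued on morphisms.

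For preservation of identities, I observe that $\id_{\mcP}$ trivially satisfies $\pp\circ\id_{\mcP}=\id_{\mcM}\circ\pp$, hence is a lift of $\id_{\mcM}$; by the uniqueness clause of Theorem \ref{theor morphism psi can be covered by Psi} it must coincide with $\F(\id_{\mcM})$, giving $\F(\id_{\mcM})=\id_{\mcP}$.

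For preservation of composition, given $\psi:\mcM\to\mcM'$ and $\psi':\mcM'\to\mcM''$ with coverings $\pp,\pp',\pp''$ and lifts $\Psi=\F(\psi)$, $\Psi'=\F(\psi')$, I would paste the two commutative squares of Theorem \ref{theor morphism psi can be covered by Psi} and compute
\[
\pp''\circ(\Psi'\circ\Psi)=(\pp''\circ\Psi')\circ\Psi=(\psi'\circ\pp')\circ\Psi=\psi'\circ(\pp'\circ\Psi)=\psi'\circ\psi\circ\pp.
\]
Thus $\Psi'\circ\Psi$ is a lift of $\psi'\circ\psi$, and once more the uniqueness of such a lift forces $\F(\psi'\circ\psi)=\Psi'\circ\Psi=\F(\psi')\circ\F(\psi)$. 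This is precisely the argument already used for superdomains in Theorem \ref{theor F(Psi circ Phi) = F(Psi) circ F(Phi)}, now transported to the global setting.

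There is no genuine obstacle: the statement is a formal consequence of the universal (existence-and-uniqueness) property of the $\Zo$-covering established in Theorem \ref{theor morphism psi can be covered by Psi}. The only point deserving attention is confirming that the pasted outer square commutes, i.e. that $\Psi'\circ\Psi$ really lifts $\psi'\circ\psi$, which is exactly the short display above; after that, the uniqueness half of Theorem \ref{theor morphism psi can be covered by Psi} does all the remaining work.
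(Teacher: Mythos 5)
Your proposal is correct and follows essentially the same route as the paper: the paper also defines $\F$ on objects via Theorem \ref{theor construction of a covering for any supermanifld} and on morphisms via the unique lift from Theorem \ref{theor morphism psi can be covered by Psi}, and then deduces $\F(\psi'\circ\psi)=\F(\psi')\circ\F(\psi)$ from the uniqueness of the lift exactly as in Theorem \ref{theor F(Psi circ Phi) = F(Psi) circ F(Phi)}. Your write-up merely spells out the pasted-square computation and the identity axiom, which the paper leaves implicit.
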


\begin{proof}
By Theorem \ref{theor construction of a covering for any supermanifld} to any supermanifold $\mcM$ we can assign a $\Zo$-covering $\mcP$ of $\mcM$. We put $\F(\mcM) = \mcP$ as constructed in Theorem \ref{theor construction of a covering for any supermanifld}. Further similarly to the proof of Theorem \ref{theor F(Psi circ Phi) = F(Psi) circ F(Phi)} we get that $\F(\psi_1\circ \psi_2) = \F(\psi_1)\circ \F(\psi_2)$, where $\F(\psi) =\Psi$ is as in Theorem \ref{theor morphism psi can be covered by Psi}.  
\end{proof}

Let $\mcN = \varprojlim \mcN_n$ and we put $\pr_n: \mcN\to \mcN_n$ be the natural projection. Further to a morphism   $\Phi: \mcN \to \mcN'$, where $\mcN= \varprojlim \mcN_k$ and $\mcN'= \varprojlim \mcN'_k$, we can assign the morphism $\Phi_k: \mcN_k \to \mcN'_{k}$, see Section \ref{sec inverse limit}.  Now for any supermanifold $\mcM$, we put $\F_n(\mcM) := \mcP_n$, where $\mcP$ is a $\Zo$-covering of $\mcM$ constructed in Theorem \ref{theor construction of a covering for any supermanifld}, and we put $\F_n(\psi) := \F(\psi)_n= \Psi_n$ for a morphism of supermanifolds $\psi$.  Let us summarize this observations in the following proposition.

\begin{proposition}\label{prop F_n is a functor}
	The mapping $\F_n$, where $n\geq 0$, is a functor from the category of supermanifolds to the category of graded manifolds of degree $n$.  
\end{proposition}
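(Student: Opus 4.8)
The plan is to realise $\F_n$ as a composite of two functors, so that functoriality is inherited rather than re-proved from scratch. The first factor is the functor $\F$ from supermanifolds to graded manifolds of the form $\mcN=\varprojlim\mcN_k$, which has already been established. The second factor is the ``$n$-th component'' assignment $\Pi_n$, sending an inverse limit $\mcN=\varprojlim\mcN_k$ to the graded manifold $\mcN_n$ of degree $n$, and sending a morphism $\Phi=\varprojlim\Phi_k:\mcN\to\mcN'$ to its component $\Phi_n:\mcN_n\to\mcN'_n$. By the very definitions used to introduce $\F_n$, we have $\F_n(\mcM)=\mcP_n=\Pi_n(\F(\mcM))$ and $\F_n(\psi)=\Psi_n=\Pi_n(\F(\psi))$, so that $\F_n=\Pi_n\circ\F$; hence it suffices to check that $\Pi_n$ is a functor.

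First I would verify that $\Pi_n$ is well defined on objects and on morphisms. On objects this is immediate from the description of the category of inverse limits in Section \ref{sec inverse limit}: each $\mcN_n$ is by construction a graded manifold of degree $n$, so $\Pi_n$ lands in the correct target category. On morphisms, recall that a morphism $\Phi:\mcN\to\mcN'$ is by definition a compatible family $(\Phi_k:\mcN_k\to\mcN'_k)$ satisfying $\Phi_k\circ\pr^{k+1}_k=\pr^{k+1}_k\circ\Phi_{k+1}$, so its $n$-th component $\Phi_n$ is a genuine morphism of graded manifolds of degree $n$; for $\Phi=\F(\psi)$ this $\Phi_n$ is precisely the morphism $\Psi_n$ produced in the proof of Theorem \ref{theor morphism psi can be covered by Psi}.

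The key step is to check that $\Pi_n$ preserves identities and composition. Preservation of identities is trivial, since $\id_\mcN=\varprojlim\id_{\mcN_k}$ has $n$-th component $\id_{\mcN_n}$. For composition, the essential point is that composition of morphisms of inverse limits is computed componentwise, i.e. $(\Phi'\circ\Phi)_k=\Phi'_k\circ\Phi_k$; this is the one computation on which the argument really rests, and it follows by a short diagram chase from the compatibility relations $\Phi_k\circ\pr^{k+1}_k=\pr^{k+1}_k\circ\Phi_{k+1}$ and its analogue for $\Phi'$, showing that the family $(\Phi'_k\circ\Phi_k)$ is itself compatible and therefore equals $\Phi'\circ\Phi$. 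Granting this, $\Pi_n(\Phi'\circ\Phi)=(\Phi'\circ\Phi)_n=\Phi'_n\circ\Phi_n=\Pi_n(\Phi')\circ\Pi_n(\Phi)$.

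Finally, since $\F$ is a functor and $\Pi_n$ is a functor, the composite $\F_n=\Pi_n\circ\F$ is a functor, which is exactly the assertion of the proposition. I do not expect a genuine obstacle here: the only substantive content is the componentwise nature of composition in the inverse-limit category, and everything else is bookkeeping built directly into the definitions of Section \ref{sec inverse limit} together with the already-proven functoriality of $\F$.
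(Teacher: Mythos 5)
Your proposal is correct and follows essentially the same route as the paper, which introduces $\F_n$ precisely as the composite of the already-established functor $\F$ with the componentwise projection $\mcN=\varprojlim\mcN_k\mapsto\mcN_n$, $\Phi\mapsto\Phi_n$, and records the proposition as an immediate consequence. Your write-up merely makes explicit the (easy) verification that this projection is itself a functor, which the paper leaves implicit.
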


\section{A geometric realization of a $\Z^{\geq 0}$-covering of a supermanifold}  Let us remind first a construction of the $k$-th-order tangent bundle $T^k(M)$ of a usual manifold $M$.

\subsection{$k$-th-order tangent bundle of a manifold $M$}
Let $M$ be a smooth or holomorphic manifold and $p\in M$. Consider two smooth functions $f,g:W \to M$, where $W$ is a neighborhood of $0\in \K$,  such that $f(0)=g(0) = p$. We say that $f$ and $g$ are {\it equivalent to order $k$ at $p$} if there is some neighborhood $U$ of $p$, such that, for every function  $\varphi :U\to \K$ the $k$-th-order jets $J^k_0(\varphi\circ f)$, $J^k_0(\varphi\circ g)$ at point $0$ are equal, that is  $J^k_0(\varphi\circ f)=J^k_0(\varphi\circ g)$.  Denote the $k$-jet of a curve $f$ through $p$ to be the equivalence class of $f$ under this equivalence relation. The $k$-th-order jet space $J^k_0(\K,M)_p$ is then the set of $k$-jets at $p$. The {\it $k$-th-order tangent bundle $T^k(M)$} of a manifold $M$ is a fiber bundle with the fiber $J^k_0(\K,M)_p$ ap $p\in M$. If $(x_a)$ are local coordinates in $U$ at $p$, the standard local coordinates in $T^k(U) \subset T^k(M)$ are 
$$
(x_a, \dot{x}_a, \ddot{x}_a, \ldots, x^{(k)}_a),
$$
where 
\begin{align*}
x^{(s)}_a := \frac{\partial^s}{\partial t^s}\Big|_{t=0} (x_a \circ f(t)), \quad f:W \to U 
\end{align*}
For instance, $T^1(M) = T(M)$. Denote $T^{\infty}(M):=\varprojlim T^k(M)$.

\begin{example}
Let	$(x_a)$ be local coordinates in $U$ and $y_c= F_c(x_a)$ be a coordinate change. Then the transformation law for coordinates $y_c, \dot{y}_c, \ddot{y}_c$ is the following
\begin{align*}
y_c= F_c(x_a), \quad \dot{y}_c = \sum_i\frac{\partial F_c}{\partial x_i} \dot{x}_i, \quad  \ddot{y}_c = \sum_i\frac{\partial F_c}{\partial x_i} \ddot{x}_i + \sum_{ij}\frac{\partial^2 F_c}{\partial x_i \partial x_j} \dot{x}_i \dot{x}_j. 
\end{align*} 
\end{example}

The fiber bundle $T^k(M)$ is a graded manifold $(M, \mcO_{T^k(M)})$ of degree $k$ assuming that $x^{(s)}_a$ has weight $s$. Denote by $\tilde\mcR$ the sheaf of ideals in $\mcO_{T^k(M)}$ locally generated by $x^{(2s+1)}_a$, where $s\geq 0$. Clearly $\tilde\mcR$ is well-defined.

\begin{proposition}\label{prop reduced O_P}
Let $M$ be a manifold and $T^{k}(M)$ be its 
$k$-th-order tangent bundle.   Then we have
$$
\mcO_{T^{2k}(M)}/\tilde\mcR \simeq \mcO_{T^{k}(M)}, \quad \mcO_{T^{\infty}(M)}/\tilde\mcR \simeq \mcO_{T^{\infty}(M)}. 
$$
\end{proposition}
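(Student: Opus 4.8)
The plan is to realize the quotient map geometrically, through the squaring reparametrization of the jet parameter, rather than to match transition cocycles by hand via the Fa\`a di Bruno formula. First I would introduce the (smooth or holomorphic) map $\sigma:\K\to\K$, $\sigma(t)=t^2$, and observe that precomposition with $\sigma$ sends a curve $g$ with $g(0)=p$ to a curve $g\circ\sigma$ with the same base point. Since the $2k$-jet of $g\circ\sigma$ at $0$ depends only on the $k$-jet of $g$ at $0$ — the coefficient of $t^{2j}$ in $g(t^2)$ is the coefficient of $u^{j}$ in $g(u)$ — this descends to a well-defined, coordinate-independent morphism of graded manifolds $\iota_k:T^{k}(M)\to T^{2k}(M)$. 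Being induced by an intrinsic operation on curves, $\iota_k$ is automatically compatible with the atlas of $T^{2k}(M)$, so it glues with no cocycle computation; this is the point that replaces the cumbersome change-of-variables bookkeeping, and it is why I prefer this route to a direct verification on transition functions.

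Next I would compute $\iota_k$ in the standard jet coordinates. Writing $x^{(s)}_a$ for the weight-$s$ coordinate on $T^{2k}(M)$ and $x^{(j)}_a$ for the weight-$j$ coordinate on $T^{k}(M)$, the expansion $g_a(t^2)=\sum_{j}\frac{g_a^{(j)}(0)}{j!}\,t^{2j}$ yields
\[
\iota_k^*\big(x^{(2j)}_a\big)=\frac{(2j)!}{j!}\,x^{(j)}_a,\qquad \iota_k^*\big(x^{(2j+1)}_a\big)=0 .
\]
In particular $\iota_k^*$ annihilates exactly the odd-weight coordinates, so $\tilde\mcR\subseteq\ker\iota_k^*$ and $\iota_k^*$ factors through $\mcO_{T^{2k}(M)}/\tilde\mcR$. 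On this quotient, which is freely generated over $\mcF_M$ by the even-weight coordinates $x^{(2j)}_a$, the induced map carries each generator to the scalar multiple $\frac{(2j)!}{j!}x^{(j)}_a$ of a free generator of $\mcO_{T^{k}(M)}$; as $\K=\R$ or $\C$ these scalars are invertible, so the induced map is an isomorphism of sheaves of algebras (halving weights). This establishes the first isomorphism.

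For the second isomorphism I would pass to the limit. The maps $\iota_k$ are compatible with the tower of projections $\pr$, again because they all arise from the single reparametrization $\sigma$, so they assemble into $\iota_\infty:T^{\infty}(M)\to T^{\infty}(M)$ obeying the same coordinate formula. Since $\mcO_{T^{\infty}(M)}=\varinjlim\mcO_{T^{k}(M)}$, the finite-level isomorphisms pass to the colimit (equivalently, $\iota_\infty^*$ has kernel the ideal generated by all odd-weight coordinates and is a rescaling on the even ones), giving $\mcO_{T^{\infty}(M)}/\tilde\mcR\simeq\mcO_{T^{\infty}(M)}$.

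The main obstacle is the identification $\ker\iota_k^*=\tilde\mcR$ \emph{exactly}, i.e. being sure no relations beyond the vanishing of the odd-weight coordinates are forced. This reduces to checking that, modulo $\tilde\mcR$, the even-weight coordinates stay algebraically independent and are carried onto the generators of $\mcO_{T^{k}(M)}$; the nonvanishing of the combinatorial factors $\frac{(2j)!}{j!}$ in characteristic zero is precisely what guarantees this, and is the only place where the choice of ground field $\K$ intervenes.
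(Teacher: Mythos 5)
Your proposal is correct, and it reaches the paper's conclusion by a somewhat different mechanism. The paper's own proof is a one-line coordinate assignment, $x^{(2s)}_a \mapsto x^{(s)}_a$ in standard jet coordinates, which implicitly relies on the (true but unverified) fact that this local prescription is compatible with the transition functions of the jet bundles. You instead manufacture the map intrinsically from the reparametrization $\sigma(t)=t^2$ of curves: since the $2k$-jet of $g\circ\sigma$ depends only on the $k$-jet of $g$, you get a global morphism $\iota_k:T^k(M)\to T^{2k}(M)$ for free, and the coordinate computation $\iota_k^*(x^{(2j)}_a)=\frac{(2j)!}{j!}x^{(j)}_a$, $\iota_k^*(x^{(2j+1)}_a)=0$ then identifies $\ker\iota_k^*$ with $\tilde\mcR$ and exhibits the quotient map as a rescaling of free generators, invertible since $\K=\R$ or $\C$. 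What your route buys is precisely the gluing: coordinate independence is automatic rather than asserted, and the same single reparametrization makes the compatibility with the tower of projections (hence the passage to $T^\infty(M)$) immediate. The only costs are the harmless combinatorial factors $\frac{(2j)!}{j!}$ (absent from the paper's normalization) and the caveat, which you already flag by saying the isomorphism ``halves weights,'' that $\iota_k$ is not weight-preserving and so is not itself a morphism of graded manifolds in the paper's strict sense --- but the induced identification of sheaves of algebras is exactly the one the proposition asserts, and the paper's own map has the same weight-halving feature.
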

\begin{proof}
In the standard local coordinates an isomorphism is given by 	$ x^{(2s)}_a \mapsto x^{(s)}_a$. 
\end{proof}

\subsection{The $k$-th-order tangent bundle of a supermanifold $\mcM$}

Let us briefly remind a construction  of the $k$-th-order tangent  bundle $T^k(\mcM)$ of a supermanifold $\mcM$, see 
 \cite{Bruce} for details. Denote by $\underline{\Hom}(\mcM',\mcM)$  the mapping supermanifold defined via functor of points by 
 $$
 \underline{\Hom}(\mcM',\mcM)(\mcS) := \Hom(\mcS\times \mcM',\mcM),
 $$
 where $\Hom(\mcS\times \mcM',\mcM)$ is the set of smooth or holomorphic morphisms $\mcS\times \mcM'\to \mcM$. 
 This is a generalized supermanifold, see for example \cite[Section: Generalized supermanifolds and the internal homs]{Bruce}. To define $T^k(\mcM)$, we need the following definition \cite[Definition 3]{Bruce}. 
 
 \begin{definition}
 	A curve on a supermanifold $\mcM$ parameterized by another supermanifold $\mcS$ is a morphism $ \gamma_{\mcS}\in \underline{\Hom}(\K,\mcM)(\mcS) = \Hom(\mcS\times \K,\mcM)$.  
 \end{definition}

Let us take $\Gamma_{\mcS}\in \underline{\Hom}(\K,\K^{1|1})(\mcS)$. The {\bf $k$-th jet of $\Gamma_{\mcS}$ at the point $t_0\in \K$} is the following polynomial with coefficients in $\mcO_{\mcS}$:
$$
J^k_{t_0} (\Gamma_{\mcS}) := \Gamma_{\mcS}\Big|_{t_0} + z\frac{\partial \Gamma_{\mcS}}{\partial t}\Big|_{t_0} + z^2 \frac{1}{2!} \frac{\partial^2 \Gamma_{\mcS}}{\partial t^2}\Big|_{t_0} + \cdots + z^k \frac{1}{k!} \frac{\partial^k \Gamma_{\mcS}}{\partial t^k}\Big|_{t_0}. 
$$
 Two elements $\Gamma_{\mcS}, \Gamma'_{\mcS}\in \underline{\Hom}(\K,\K^{1|1})(\mcS)$ are called {\bf equivalent to order $k$} if 
 $$
 J^k_{t_0} (\Gamma_{\mcS}) = J^k_{t_0} (\Gamma'_{\mcS}). 
  $$

  Let we have two curves $ \gamma_{\mcS},\gamma'_{\mcS}\in \underline{\Hom}(\K,\mcM)(\mcS)$. Two such curves are in {\bf contact to order $k$ at the $\mcS$-point  $x\in\Hom(\mcS, \mcM)$}, if 
  $$
  J^k_0(\varphi\circ \gamma_{\mcS})=J^k_0(\varphi\circ \gamma'_{\mcS}), \quad (\varphi\circ \gamma_{\mcS})|_0= (\varphi\circ \gamma'_{\mcS})|_0=x
  $$
  for any locally defined function $\phi: \mcM\to \K^{1|1}$. An equivalence class of this relation is called a $(k,\mcS)$-jet from $\K$ to $\mcM$ and it denoted by $[\gamma_{\mcS}]_{\mcS}^k$. Following \cite{Bruce}, we denote by $J^k_{0} (\K, \mcM (\mcS))_x$ the set of all $(k,\mcS)$-jets from $\K$ to $\mcM$ passing through the $\mcS$-point $x = \gamma_{\mcS}|_{\{0\}\times \mcS}:\mcS\to \mcM$.   And the set of all $(k,\mcS)$-jets from $\K$ to $\mcM$ we denote by  $J^k_{0} (\K, \mcM (\mcS))$.

 \begin{definition}\cite{Bruce}
 	The $k$-th-order tangent bundle $T^k(\mcM)$ of a supermanifold $\mcM$ is the (generalized) supermanifold defined by functor of points as follows.
 	$$
 	T^k(\mcM) (\mcS) := J^k_{0} (\K, \mcM (\mcS)). 
 	$$ 	
 \end{definition}
Note that $T^k(\mcM)$ is in fact a {\bf supermanifold}, see \cite[Theorem 1]{Bruce}. Moreover it is  a {\bf graded supermanifold} with local graded coordinates 
\begin{equation}\label{eq standart coordinates in T^k(mcM)}
(x_a, \dot{x}_a, \ddot{x}_a, \ldots, x^{(k)}_a,\xi_b, \dot{\xi}_b, \ddot{\xi}_b, \ldots, \xi^{(k)}_b),
\end{equation}
where the weight of $x^{(s)}_a$ and $\xi^{(s)}_b$ is equal to $s$. The coordinates $(x^{(s)}_a)$ are even, while the coordinates $(\xi^{(t)}_b)$ are odd.  Here we defined 
\begin{align*}
x^{(s)}_a := \frac{\partial^s}{\partial t^s}\Big|_{t=0} (x_a\circ \gamma_{\mcS}), \quad \xi^{(s)}_b := \frac{\partial^s}{\partial t^s}\Big|_{t=0} (\xi_b \circ \gamma_{\mcS}). 
\end{align*}

\begin{remark}\label{rem transition functions for  jets super}
Note that the transition functions for coordinates (\ref{eq standart coordinates in T^k(mcM)}) between standard local charts have the same form as in classical case, see \cite{Bruce}.  	
\end{remark}

Denote $T^{\infty}(\mcM):=\varprojlim T^k(\mcM)$.

\subsection{About graded supermanifolds of degree $k$}

Let we have  a graded supermanifold $\mcN=(\mcN_0,\mcO_{\mcN})$ of degree $k$ and of dimension $(m_q|n_q)_{q=0}^k$. This means that we can choose an atlas on $\mcN$  with local graded homogeneous coordinates $(z_{a_q}^q,\zeta_{b_q}^q)$, where $a_q = 1,\ldots, m_{q}$ and $b_q = 1,\ldots, n_{q}$. Further $z_{a_q}^q$ are even, while $\zeta_{b_q}^q$ are odd coordinates, both of weight $q\in \Zo$.  Transition functions preserve parities and weights of local coordinates. 
Let $\mcR$ be an ideal in $\mcO_{\mcN}$ locally generated  by  $\zeta_{b_q}^{(2q)}$ and $z^{(2q+1)}_{a_q}$, were $q\in \Z$.  

\begin{proposition} 
	The sheaf of ideals $\mcR$ is well-defined. 
\end{proposition}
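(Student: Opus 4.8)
The plan is to give an intrinsic, coordinate-free description of $\mcR$ and then read off well-definedness from the fact that transition functions preserve both weight and parity. For a local section $f$ that is homogeneous of weight $\w(f)\in\Z$ and of parity $|f|\in\Z_2$, set
$$
\delta(f):=|f|-\overline{\w(f)}\in\Z_2,
$$
where $\overline{n}$ denotes the class of $n$ modulo $2$. Since both $|\cdot|$ and $\w(\cdot)$ are additive on products of homogeneous elements, $\delta$ is additive as well, $\delta(fg)=\delta(f)+\delta(g)$. By inspection the generators listed in the statement are exactly the coordinate functions with $\delta=\odd$: the odd coordinates $\zeta^{(2q)}_{b_q}$ of even weight and the even coordinates $z^{(2q+1)}_{a_q}$ of odd weight, whereas the remaining coordinates (the even ones of even weight and the odd ones of odd weight) have $\delta=\even$. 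I would therefore claim that $\mcR$ coincides with the ideal $\mcR'$ generated by \emph{all} homogeneous local sections $f$ with $\delta(f)=\odd$.

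First I would prove the claim. The inclusion $\mcR\subseteq\mcR'$ is immediate, since every generator of $\mcR$ has $\delta=\odd$. For the reverse inclusion it suffices to show that any homogeneous $f$ with $\delta(f)=\odd$ lies in $\mcR$. Locally, $f$ is a finite sum of coordinate monomials, each of the same weight $\w(f)$ and the same parity $|f|$ as $f$ (with coefficients that are functions of the even weight-$0$ coordinates, for which $\delta=\even$); hence each such monomial again has $\delta=\odd$. By additivity of $\delta$, a monomial built only from coordinates with $\delta=\even$ has $\delta=\even$; so a monomial with $\delta=\odd$ must contain at least one factor with $\delta=\odd$, that is, at least one of the generators $\zeta^{(2q)}_{b_q}$ or $z^{(2q+1)}_{a_q}$. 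Thus every monomial of $f$ lies in $\mcR$, and therefore $f\in\mcR$. This proves $\mcR'\subseteq\mcR$ and hence $\mcR=\mcR'$.

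Finally I would deduce well-definedness. The description of $\mcR'$ refers only to the weight and the parity of homogeneous sections and makes no reference to a chart. Because transition functions of a graded supermanifold preserve both the weight and the parity of local coordinates, they preserve $\delta$, and so carry sections with $\delta=\odd$ to sections with $\delta=\odd$. Consequently the ideal generated in one chart by the mismatched coordinates agrees on overlaps with the ideal generated in any other chart, i.e. the locally defined ideals glue to a single sheaf of ideals. I expect the only delicate point to be the bookkeeping in the monomial argument --- in particular keeping track of the odd weight-$0$ coordinates $\zeta^{(0)}_{b_0}$, which are themselves mismatched, and of the coefficient functions of the even weight-$0$ coordinates --- but once $\delta$ is seen to be additive this reduces to the observation that a weight- and parity-homogeneous monomial with $\delta=\odd$ cannot be assembled from coordinates with $\delta=\even$ alone.
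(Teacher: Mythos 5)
Your proof is correct. The mathematical core coincides with the paper's: both arguments come down to the observation that a weight- and parity-homogeneous monomial whose parity disagrees with its weight modulo $2$ must contain at least one coordinate factor with the same mismatch, and those factors are exactly the generators $\zeta^{(2q)}_{b_q}$, $z^{(2q+1)}_{a_q}$ (the paper's case analysis --- ``all $s_i$ even, all $t_j$ odd, so $v$ is odd, so the weight-$0$ coefficient $G$ is odd and hence divisible by some $\zeta^0_{b_0}$'' --- is precisely your additivity of $\delta$ applied term by term). What differs is the packaging. The paper proves invariance extrinsically: it expands the transition function of each generator of $\mcR$ in the coordinates of a second chart and checks that every resulting term lies in the $\mcR$ of that chart. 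You instead prove the stronger, intrinsic statement $\mcR=\mcR'$, where $\mcR'$ is generated by \emph{all} homogeneous sections $f$ with $\delta(f)=|f|-\overline{\w(f)}=\odd$; since weight and parity are preserved by transition functions, $\mcR'$ is manifestly chart-independent and well-definedness of $\mcR$ follows without ever writing a transition function. Your route buys a cleaner, coordinate-free description of the ideal (useful if one later wants to see that $\mcR$ is preserved by arbitrary morphisms of graded supermanifolds, not just coordinate changes), at the cost of one extra layer of bookkeeping: one must note, as you do, that a general homogeneous section is a polynomial in the positive-weight and odd weight-$0$ coordinates with coefficients depending only on the even weight-$0$ coordinates, so that the coefficient functions have $\delta=\even$ and the additivity argument applies to each monomial. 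With that point made explicit, there is no gap.
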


\begin{proof}
	Consider two graded charts $\mcV$ and $\mcV'$ on $\mcN$ such that  $\mcV_0\cap\mcV_0' \ne \emptyset$ with coordinates $(z_{a_q}^q,\zeta_{b_q}^q)$ and $(\tilde z_{a_q}^q,\tilde\zeta_{b_q}^q)$, respectively.  Let us write the function $\tilde z^{2q+1}_c$ in $\mcV_0\cap\mcV_0'$ in coordinates $(z_{a_q}^q,\zeta_{b_q}^q)$. We have:
	\begin{equation}\label{eq raded transin function}
\tilde z^{2q+1}_c = G_c^{a_1,\ldots, a_u,b_1,\ldots, b_v} z^{(s_1)}_{a_1}\cdots z^{(s_u)}_{a_u} \zeta^{(t_1)}_{b_1}\cdots \zeta^{(t_v)}_{b_v},
	\end{equation}
		where $G^{a_1,\ldots, a_u,b_1,\ldots, b_v}_c$ is a function of weight $0$.
Since $\tilde z^{2q+1}_c$ is an even function of weight $2q+1$, the right hand side of (\ref{eq raded transin function}) is even of weight $2q+1$. 
	 Therefore, 
	$$
	\sum_{i=1}^u s_i+ \sum_{j=1}^v t_j =2q+1.
	$$

Let us prove that the right hand side of (\ref{eq raded transin function}) is in $\mcR$.  If there are odd numbers between numbers $s_i$,  or there are even numbers between $t_j$, we are done.  Let all $s_i$ are even and all $t_j$ are odd. Since their sum is equal to $2q+1$ an odd number, this implies that $v$ is odd. Hence, $G_c^{a_1,\ldots, a_u,b_1,\ldots, b_v}$ is odd as well. This means that $G_c^{a_1,\ldots, a_u,b_1,\ldots, b_v}$ is in $\mcR$, since $\zeta^0_{b_0}\in \mcR$. For the case of $\tilde \zeta_d^{(2q)}$ the argument is similar. 	
\end{proof}

\begin{corollary}\label{cor N super to N graded}
	To any graded supermanifold $\mcN$ we can assign a graded manifold $(\mcN_0,\mcO_{\mcN}/\mcR)$. 
\end{corollary}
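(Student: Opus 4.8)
The plan is to verify that the quotient ringed space $(\mcN_0,\mcO_{\mcN}/\mcR)$ meets the definition of a graded manifold of degree $k$ from Section~\ref{sec Graded manifolds of type Delta}: it must be $\Z$-graded and locally isomorphic to a graded domain. The global content is already supplied by the preceding proposition, which tells us that $\mcR$ is a well-defined sheaf of ideals on all of $\mcN_0$; this is precisely what makes the quotient $\mcO_{\mcN}/\mcR$ meaningful as a sheaf. First I would note that $\mcR$ is generated by homogeneous elements — the odd generators $\zeta_{b_q}^{(2q)}$ of even weight and the even generators $z_{a_q}^{(2q+1)}$ of odd weight — so it is a $\Z$-graded ideal and the quotient inherits a $\Z$-grading $\mcO_{\mcN}/\mcR=\bigoplus_q (\mcO_{\mcN}/\mcR)_q$.

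Next I would pass to a graded chart $\mcV$ with coordinates $(z_{a_q}^q,\zeta_{b_q}^q)$, where locally $\mcO_{\mcN}|_{\mcV}\simeq \mcF_{\mcV_0}\otimes S^*(W)$ with $W$ spanned by all coordinates of positive weight. Quotienting by $\mcR$ simply removes the parity-mismatched generators, so that $(\mcO_{\mcN}/\mcR)|_{\mcV}\simeq \mcF_{\mcV_0}\otimes S^*(W')$, where $W'$ is spanned by the even coordinates $z_{a_q}^{(2q)}$ of even weight and the odd coordinates $\zeta_{b_q}^{(2q+1)}$ of odd weight. The key conceptual step is to recognize that $W'$ now carries a parity-matched $\Z$-grading — even weight corresponds to even parity and odd weight to odd parity — so this local model is exactly a graded domain of degree $k$, of dimension $\{p_q\}$ with $p_{2q}=m_{2q}$ and $p_{2q+1}=n_{2q+1}$.

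Finally I would observe that the transition functions of $\mcN$ descend to the quotient precisely because $\mcR$ is preserved under coordinate changes (again by the preceding proposition), and that the descended maps preserve weights and are local on the degree-zero part, hence form a cocycle of morphisms of graded domains. Assembling the charts then produces the graded manifold structure on $(\mcN_0,\mcO_{\mcN}/\mcR)$. I do not expect a genuine obstacle here: the only real content is the local identification of the quotient with a graded domain, and everything global has been reduced to the well-definedness of $\mcR$. In this sense the corollary is a bookkeeping consequence of that proposition together with the observation that killing the mismatched coordinates is exactly what enforces the parity-matching condition distinguishing a graded manifold from a graded supermanifold.
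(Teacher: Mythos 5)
Your proof is correct and follows essentially the same route as the paper, which simply notes that $\mcR$ is $\Z$-graded because it is generated by graded coordinates and that the surviving parity-matched coordinates $(z_{a_s}^s+\mcR,\zeta_{b_t}^t+\mcR)$, with $s$ even and $t$ odd, serve as graded coordinates on the quotient. Your version just spells out the local identification with a graded domain and the descent of transition functions in more detail.
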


\begin{proof}
The ideal $\mcR$ is $\Z$-graded, since it is generated by graded coordinates. We can choose the following graded coordinates in 	$(\mcN_0,\mcO_{\mcN}/\mcR)$:   $(z_{a_s}^s+\mcR,\zeta_{b_t}^t +\mcR)$, where $s$ is even and $t$ is odd. 
\end{proof}

\subsection{The $k$-th-order tangent bundle of a supermanifold $\mcM$ and a $\Z^{\geq 0}$-covering space}

 Let $\mcM$ be a supermanifold. We apply Corollary \ref{cor N super to N graded} to the graded supermanifold $T^{\infty}(\mcM)$.

\begin{theorem}\label{theor main construction 2}
Let $\mcM$ be a supermanifold and $T^{\infty}(\mcM)$ be its 
	$\infty$-th-order tangent bundle. Then 
	$$
	\mcO_{T^{\infty}(\mcM)}/\mcR \simeq \mcO_{\mcP},
	$$
where $\mcP$ is a covering space of $\mcM$. 
\end{theorem}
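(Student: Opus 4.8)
The plan is to prove the isomorphism $\mcO_{T^{\infty}(\mcM)}/\mcR \simeq \mcO_{\mcP}$ locally in a standard chart and then check that the local isomorphisms are compatible with transition functions, so that they glue to a global isomorphism of sheaves. First I would fix an atlas $\{\mcU_i\}$ on $\mcM$ with coordinates $(x_a,\xi_b)$. On such a chart, the $\infty$-th-order tangent bundle $T^{\infty}(\mcM)$ has graded coordinates $(x_a^{(s)}, \xi_b^{(s)})_{s\geq 0}$ as in \eqref{eq standart coordinates in T^k(mcM)}, where $x_a^{(s)}$ has weight $s$ and parity $\bar 0$, while $\xi_b^{(s)}$ has weight $s$ and parity $\bar 1$. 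The ideal $\mcR$ is generated locally by the odd-weight even coordinates $x_a^{(2q+1)}$ and the even-weight odd coordinates $\xi_b^{(2q)}$. Passing to the quotient $\mcO_{T^{\infty}(\mcM)}/\mcR$ therefore kills precisely these generators, and by Corollary \ref{cor N super to N graded} the surviving coordinates are the even-weight even ones $x_a^{(2s)}$ and the odd-weight odd ones $\xi_b^{(2t+1)}$. The natural candidate isomorphism matches these with the covering coordinates via $x_a^{(2s)}\mapsto y_a^{2s}$ and $\xi_b^{(2t+1)}\mapsto \eta_b^{2t+1}$, which is exactly the assignment used in Proposition \ref{prop reduced O_P} extended to the super case.

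Second I would verify that this coordinatewise matching is compatible with the two gluing cocycles. On the covering side, Corollary \ref{corol transition function for a covering} tells us that the adapted transition functions are $\tilde y^s_c = \pr_s\circ\pp^*(H_c(x_a,\xi_b))$ and $\tilde\eta^t_d=\pr_t\circ\pp^*(G_d(x_a,\xi_b))$, where $\tilde x_c = H_c(x_a,\xi_b)$, $\tilde\xi_d = G_d(x_a,\xi_b)$ are the transition functions of $\mcM$. On the tangent-bundle side, by Remark \ref{rem transition functions for jets super} the coordinates $(x_a^{(s)},\xi_b^{(s)})$ transform by the classical jet-prolongation law, namely $\tilde x_c^{(s)}$ and $\tilde\xi_d^{(s)}$ are the weight-$s$ parts of $H_c$ and $G_d$ evaluated on the jet variables via Taylor expansion. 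The key point is that the Taylor-series expansion defining $\pp^*$ in \eqref{eq p^*_k(F) Taylor series}, once reduced modulo $\mcR$, produces exactly the same weight-$s$ component as the jet prolongation reduced modulo $\mcR$: in both cases one substitutes $x_a \rightsquigarrow \sum_s x_a^{(s)}$ (respectively $\sum_s y_a^s$), expands, and collects the weight-$s$ piece. I expect the main obstacle to be checking this matching of transition functions carefully, because one must confirm that the combinatorial bookkeeping of the two constructions agrees after reducing modulo $\mcR$, including the signs and the interaction between even and odd variables.

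The cleanest way to organize this is to observe that both $\mcO_{T^{\infty}(\mcM)}/\mcR$ and $\mcO_{\mcP}$ are obtained by the same universal prescription: take $F(x_a,\xi_b)$, substitute $x_a\mapsto \sum_{s}x_a^{(s)}$ and $\xi_b\mapsto\sum_t\xi_b^{(t)}$, expand by Taylor's formula, and retain the result. Reducing the tangent bundle modulo $\mcR$ discards all generators of the ``wrong'' parity-weight combination, leaving precisely the even-weight even and odd-weight odd generators that the covering $\mcP$ carries by Definition \ref{def new-covering of supermanifold}. Thus the local isomorphism identifies the reduced substitution formula on the tangent side with formula \eqref{eq p^*_k(F) Taylor series} defining $\pp^*$. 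Since the covering $\mcP$ of $\mcM$ is unique up to isomorphism by Theorem \ref{theor morphism psi can be covered by Psi}, it suffices to exhibit one graded manifold of the form $(\mcM_0, \mcO_{T^{\infty}(\mcM)}/\mcR)$ carrying a covering morphism to $\mcM$ satisfying Definition \ref{def new-covering of supermanifold}, and the theorem follows. The covering morphism itself is defined on each chart by the reduced Taylor substitution, which is compatible across charts precisely because both cocycles reduce to the same data modulo $\mcR$.
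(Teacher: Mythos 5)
Your proposal is correct and follows essentially the same route as the paper: the paper's proof also defines $y^s_a := x^{(s)}_a + \mcR$, $\eta^t_b := \xi^{(t)}_b + \mcR$ and takes the covering morphism to be the infinite jet prolongation $\mcO_{\mcM}\to\mcO_{T^{\infty}(\mcM)}$ followed by reduction modulo $\mcR$, then checks Definition \ref{def new-covering of supermanifold}. You merely spell out in more detail the verification (left implicit in the paper) that the reduced jet-prolongation transition functions coincide with the Taylor-substitution formula \eqref{eq p^*_k(F) Taylor series} defining $\pp^*$, and you invoke uniqueness from Theorem \ref{theor morphism psi can be covered by Psi} explicitly.
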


\begin{proof} The sheaf 
$\mcO_{T^{\infty}(\mcM)}/\mcR$ is a sheaf of a graded manifold. Further denote
\begin{align*}
y^{s}_a:= x^{(s)}_a + \mcR, \quad \eta_b^t: = \xi_b^{(t)} + \mcR, 
\end{align*}  
where $s$ is even and $t$ is odd. Further, we have a natural map, infinite jet prolongation, from $\mcO_{\mcM}$ to $\mcO_{T^{\infty}(\mcM)}$. Then the graded manifold $(\mcM_0, \mcO_{T^{\infty}(\mcM)}/\mcR)$ and the map 
$$
\mcO_{\mcM} \to \mcO_{T^{\infty}(\mcM)} \to \mcO_{T^{\infty}(\mcM)}/\mcR,
$$
which we denote by $\pp$, satisfy Definition \ref{def new-covering of supermanifold}. The proof is complete. 
\end{proof}

\begin{corollary}\footnote{For Lie groups this observation has been conjected by M.~Rotkiewicz in \cite{RVi}.}
	Let $\mcJ_{\mcP}$ be the sheaf of ideals in $\mcO_{\mcP}$ locally generated by odd elements. Then the pure even graded manifolds $(\mcP_0, \mcO_{\mcP}/ \mcJ_{\mcP} )$ and $T^{k}(\mcP_0)$ are naturally isomorphic. 
\end{corollary}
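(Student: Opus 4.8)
The plan is to transport everything to the $\infty$-th-order tangent bundle via the identification $\mcO_{\mcP}\simeq \mcO_{T^{\infty}(\mcM)}/\mcR$ of Theorem \ref{theor main construction 2}, compute the preimage of $\mcJ_{\mcP}$ there, and then collapse the result down to $\mcO_{T^{\infty}(\mcP_0)}$ using Proposition \ref{prop reduced O_P}. (Since $\mcP$ carries even coordinates of every even weight, the quotient is infinite-dimensional, so the statement is really about $T^{\infty}(\mcP_0)$.) Under the identification of Theorem \ref{theor main construction 2} the even generators of $\mcO_{\mcP}$ are $y^{s}_a=x^{(s)}_a+\mcR$ with $s$ even and the odd generators are $\eta^{t}_b=\xi^{(t)}_b+\mcR$ with $t$ odd. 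Because $\mcJ_{\mcP}$ is generated by the odd coordinates $\eta^{t}_b$, its preimage in $\mcO_{T^{\infty}(\mcM)}$ is $\langle \mcR,\ \xi^{(t)}_b\ (t\text{ odd})\rangle$; combining this with $\mcR=\langle \xi^{(2q)}_b,\ x^{(2q+1)}_a\rangle$, I would show the preimage is precisely the ideal generated by \emph{all} odd coordinates $\xi^{(t)}_b$ together with the odd-weight even coordinates $x^{(2q+1)}_a$. Thus $\mcO_{\mcP}/\mcJ_{\mcP}\simeq \mcO_{T^{\infty}(\mcM)}/\langle\,\xi^{(t)}_b,\ x^{(2q+1)}_a\,\rangle$.

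I would then reduce in two stages. Killing all odd coordinates of $T^{\infty}(\mcM)$ passes to the reduced manifold, which is $T^{\infty}(\mcM_0)$: by Remark \ref{rem transition functions for jets super} the transition functions of the even coordinates $x^{(s)}_a$ agree with the classical jet-prolonged ones, and setting the $\xi_b$ to zero turns them into the $T^{\infty}(\mcM_0)$ cocycle, so $\mcO_{T^{\infty}(\mcM)}/\langle\text{all odd}\rangle\simeq \mcO_{T^{\infty}(\mcM_0)}$. The surviving generators $x^{(2q+1)}_a$ are exactly those spanning the ideal $\tilde\mcR$ of Proposition \ref{prop reduced O_P}, and that proposition gives $\mcO_{T^{\infty}(\mcM_0)}/\tilde\mcR\simeq \mcO_{T^{\infty}(\mcM_0)}$ through the weight-halving assignment $x^{(2s)}_a\mapsto x^{(s)}_a$. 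Chaining these isomorphisms yields $\mcO_{\mcP}/\mcJ_{\mcP}\simeq \mcO_{T^{\infty}(\mcP_0)}$, realised in coordinates by the single map $y^{2s}_a\mapsto x^{(s)}_a$.

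To finish I would verify that this local assignment glues and is natural. The gluing follows from Corollary \ref{corol transition function for a covering}, which describes the transition of $y^{s}_a$ as $\pr_s\circ\pp^{*}$ applied to the even transition functions $H_c(x_a,\xi_b)$ of $\mcM$; discarding the $\eta$'s (equivalently setting $\xi=0$) and halving weights reproduces exactly the $T^{\infty}(\mcP_0)$ cocycle, so the charts patch, and naturality is inherited from the functoriality of $\F$ and of the jet construction.

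The step I expect to be the main obstacle is the ideal-theoretic bookkeeping in the first paragraph: one must verify that, modulo $\mcR$, the ideal $\mcJ_{\mcP}$ contributes exactly the coordinates $\xi^{(t)}_b$ with $t$ odd, so that $\mcR+\mcJ_{\mcP}$ annihilates precisely the odd coordinates and the odd-weight even coordinates while leaving every even-weight even coordinate $x^{(2s)}_a$ intact. Once this identification of ideals is secured, the remainder is a direct application of Proposition \ref{prop reduced O_P} together with the naturality of the jet functor.
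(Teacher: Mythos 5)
Your proposal is correct and follows essentially the same route as the paper, whose entire proof is the single line ``The statement follows from Proposition \ref{prop reduced O_P}''; you have simply made explicit the intermediate steps that one-liner relies on (passing through Theorem \ref{theor main construction 2}, identifying the preimage of $\mcJ_{\mcP}$ as the ideal generated by all $\xi^{(t)}_b$ together with the $x^{(2q+1)}_a$, reducing to $T^{\infty}(\mcM_0)$, and then halving weights via $x^{(2s)}_a\mapsto x^{(s)}_a$). Your observation that the statement should read $T^{\infty}(\mcP_0)$ rather than $T^{k}(\mcP_0)$ is also a fair catch, consistent with the $\mcO_{T^{\infty}(M)}/\tilde\mcR\simeq\mcO_{T^{\infty}(M)}$ case of that proposition.
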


\begin{proof}
	The statement follows from Proposition \ref{prop reduced O_P}. 
\end{proof}

\section{A description of $\mcP_0$, $\mcP_1$ and $\mcP_2$}\label{secP_0, p_1, P_2}

Let $\mcM$ be a supermanifold and $\mcP = \varprojlim \mcP_n$ be a covering of $\mcM$, which always there exists by Theorem \ref{theor construction of a covering for any supermanifld} or by Theorem \ref{theor main construction 2}. Let us describe $\mcP_0$, $\mcP_1$ and $\mcP_2$.

\subsection{A description of $\mcP_0$ and $\mcP_1$}
According to Theorems \ref{theor construction of a covering for any supermanifld} or \ref{theor main construction 2}, the base space of $\mcP$ is equal to the manifold $\mcM_0$, which is the base space of $\mcM$. Further in \cite{Bruce} it was noticed that $T^1\mcM\simeq T\mcM$, the supertangent bundle. Then
$$
\mcO_{\mcP_1} \simeq \mcO_{T\mcM}/(\mcR \cap \mcO_{T\mcM})\simeq \mcO_{\gr\mcM}, 
$$
where $\gr \mcM$ is the retract of $\mcM$, see Section \ref{sec gr}. 
The isomorphism is locally given by 
$$
\dot{\xi}_b + \mcR \cap \mcO_{T\mcM} \longmapsto \xi_b +\mcJ^2.
$$
Let us summerize these results in the following proposition. 

\begin{proposition}\label{prop P_0= P_1=}
	We have  $\mcP_0 \simeq \mcM_0$ and	$\mcP_1 \simeq \gr\mcM.$
\end{proposition}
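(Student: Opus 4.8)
The plan is to prove the two isomorphisms $\mcP_0 \simeq \mcM_0$ and $\mcP_1 \simeq \gr \mcM$ separately, reading off both from the explicit description of the covering provided by Theorem \ref{theor construction of a covering domain} (equivalently Theorem \ref{theor main construction 2}) together with the transition-function formula of Corollary \ref{corol transition function for a covering}. The statement $\mcP_0 \simeq \mcM_0$ is essentially definitional: Definition \ref{def new-covering of supermanifold} requires $\mcM_0 = \mcP_0$ at the level of underlying spaces, and $\mcP_0$, being the degree-$0$ piece of the inverse system, carries only the weight-$0$ coordinates $y_a^0$; since $\pr_0 \circ \pp^*(x_a) = y_a^0$, the sheaf $\mcO_{\mcP_0}$ is exactly the sheaf $\mathcal F_{\mcM_0}$ of ordinary functions, so $\mcP_0 \simeq \mcM_0$ as $\Z$-graded (degree $0$) manifolds.

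For the second isomorphism, the first step is to record that $T^1\mcM \simeq T\mcM$, the supertangent bundle, as observed in \cite{Bruce}; hence by Theorem \ref{theor main construction 2} we have $\mcO_{\mcP_1} \simeq \mcO_{T\mcM}/(\mcR \cap \mcO_{T\mcM})$. I would then identify this quotient explicitly in local coordinates. The degree-$1$ tangent bundle $T\mcM$ has standard coordinates $(x_a, \xi_b, \dot x_a, \dot\xi_b)$ with weights $0,0,1,1$; the ideal $\mcR$ is generated by the odd weight-$0$ coordinates $\xi_b$ and the even weight-$1$ coordinates $\dot x_a$. Passing to the quotient kills $\xi_b$ and $\dot x_a$, leaving surviving coordinates $x_a$ of weight $0$ and $\eta_b^1 := \dot\xi_b + \mcR$ of weight $1$. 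Thus $\mcO_{\mcP_1}$ is locally $\mathcal F_{\mcM_0} \otimes \bigwedge(\eta_b^1)$, which is exactly the local form of a split supermanifold $\bigwedge \mcE$ with $\mcE = \mcJ/\mcJ^2$.

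The key step is then to match transition functions, so that the local isomorphisms glue to a global one $\mcP_1 \simeq \gr \mcM$. I would use Corollary \ref{corol transition function for a covering}: if $\tilde\xi_d = G_d(x_a,\xi_b)$ is an odd transition function on $\mcM$, then the covering coordinate transforms by $\tilde\eta_d^1 = \pr_1 \circ \pp^*(G_d(x_a,\xi_b))$. Since $\pp^*(\xi_b) = \eta_b^1 + \eta_b^3 + \cdots$ and $\pr_1$ extracts the weight-$1$ part, only the part of $G_d$ linear in the $\xi_b$ with $x$-dependent coefficients survives, giving $\tilde\eta_d^1 = \sum_b \frac{\partial G_d}{\partial \xi_b}(x_a,0)\,\eta_b^1$. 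This is precisely the transition law for the locally free sheaf $\mcE = \mcJ/\mcJ^2$ defining $\gr\mcM$ in Section \ref{sec gr}, namely $\xi_b + \mcJ^2 \mapsto \sum_b \frac{\partial G_d}{\partial \xi_b}(x,0)(\xi_b + \mcJ^2)$. The explicit local correspondence is $\dot\xi_b + \mcR \cap \mcO_{T\mcM} \longmapsto \xi_b + \mcJ^2$, as stated.

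I expect the main obstacle to be purely bookkeeping rather than conceptual: one must verify carefully that the weight grading on $\mcO_{\mcP_1}$ coincides with the $\mcJ$-adic grading on $\gr\mcO$ under this correspondence, and that no higher-order terms contribute to the weight-$1$ piece after quotienting by $\mcR$. Both checks are immediate once the coordinate forms above are in place, so the proof is short; the substantive content is already packaged into Theorem \ref{theor main construction 2}, Corollary \ref{corol transition function for a covering}, and the identification $T^1\mcM \simeq T\mcM$.
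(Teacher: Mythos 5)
Your proof is correct and follows essentially the same route as the paper: the paper likewise reads $\mcP_0\simeq\mcM_0$ off the construction, invokes $T^1\mcM\simeq T\mcM$ from \cite{Bruce}, and identifies $\mcO_{\mcP_1}\simeq\mcO_{T\mcM}/(\mcR\cap\mcO_{T\mcM})\simeq\mcO_{\gr\mcM}$ via the local correspondence $\dot\xi_b+\mcR\mapsto\xi_b+\mcJ^2$. Your explicit check that the transition laws $\tilde\eta^1_d=\sum_b\frac{\partial G_d}{\partial\xi_b}(x,0)\,\eta^1_b$ match those of $\mcE=\mcJ/\mcJ^2$ is a welcome addition that the paper leaves implicit.
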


\subsection{A description of $\mcP_2$}
Let $\mcN$ be any graded manifold of degree $2$. Then we have the following natural projections
$$
\mcN_0 \xleftarrow{\pr^1_0} \mcN_1 \xleftarrow{\pr^2_1} \mcN_2. 
$$
Compare with Section \ref{sec inverse limit}. The graded manifold $\mcN_0$ of degree $0$ is just a usual smooth or complex analytic manifold. Further, $\mcN_1$ is a graded manifold of degree $1$. Any such a graded manifold is isomorphic to the following ringed space (in fact a split supermanifold) 
$$
\big(\mcN_0,\bigwedge (\mcO_{\mcN_1})_1\big),
$$ 
where $(\mcO_{\mcN_1})_1$ is the subsheaf in $\mcO_{\mcN_1}$ of graded functions of weight $1$.

Now assume that $\mcN_0$ and $\mcN_1$ are fixed. Then $\mcN_2$ is determined by the following exact sequence
\begin{equation}\label{eq exact sequence graded man of degree 2}
0 \to  \bigwedge^2 (\mcO_{\mcN_1})_1 \longrightarrow (\mcO_{\mcN_2})_2 \longrightarrow \mcS \to 0. 
\end{equation}
see for instance \cite[Section 4.2, Construction 2, Formula 7]{Vish}. Here $(\mcO_{\mcN_2})_2$ is the subsheaf in $\mcO_{\mcN_2}$ of graded functions of weight $2$ and $\mcS$ is the quotient of $(\mcO_{\mcN_2})_2$ by $\bigwedge^2 (\mcO_{\mcN_1})_1$. Exact Sequence (\ref{eq exact sequence graded man of degree 2})  is an exact sequnce of locally free sheaves. Such a sequnce is determined by its Atiyah class 
$$
\omega_{\mcN_2}\in H^1\big(\mcN_0, \mcS^*\otimes \bigwedge^2 (\mcO_{\mcN_1})_1 \big). 
$$ 
We will call the cohomology class $\omega_{\mcN_2}$ the {\it Atiyah class of the graded manifold $\mcN_2$ of degree $2$}. 

Let $\mcM$ be a supermanifold. Following \cite{Witten Atiyah classes} we denote by 
$$
\omega_2\in H^1(\mcM_0, \bigwedge^2\mathcal E\otimes\mathcal{D}er\mathcal F),
$$ 
where $\gr \mcM \simeq (\mcM_0, \bigwedge \mathcal E)$, 
the first obstruction class to  splitting of $\mcM$, see Appendix \ref{sec First obstruction class omega_2} and Section \ref{sec Donagi and Witten construction}. Let us prove the following theorem. 

\begin{theorem}\label{theor omega_p = omega M}
	Let $\mcM$ be a supermanifold and $\mcP = \varprojlim \mcP_n$ be its $\Z^{\geq 0}$-covering. Then
	$\omega_{\mcP_2} = \omega_2.$
\end{theorem}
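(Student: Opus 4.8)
The plan is to represent both $\omega_{\mcP_2}$ and $\omega_2$ by explicit \v{C}ech $1$-cocycles with respect to a common atlas $\{\mcU_i\}$ of $\mcM$ and the associated covering atlas $\{\mcV_i=\F(\mcU_i)\}$, and to verify that these cocycles literally coincide. The first step is to identify the sheaf $\mcS$ in the defining sequence \eqref{eq exact sequence graded man of degree 2} for $\mcN=\mcP_2$. By Proposition \ref{prop P_0= P_1=} we have $\mcP_0\simeq\mcM_0$ and $(\mcO_{\mcP_1})_1\simeq\mcE$, so $\bigwedge^2(\mcO_{\mcP_1})_1=\bigwedge^2\mcE$, locally spanned by the products $\eta_b^1\eta_{b'}^1$. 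The weight-$2$ sheaf $(\mcO_{\mcP_2})_2$ is freely generated over $\mcO_{\mcM_0}$ by the even coordinates $y_a^2$ together with the $\eta_b^1\eta_{b'}^1$, so $\mcS$ is freely generated by the classes $[y_a^2]$. Using Corollary \ref{corol transition function for a covering} with the even transition functions $\tilde x_c=H_c(x_a,\xi_b)$ of $\mcM$ and the Taylor expansion of Theorem \ref{theor construction of a covering domain}, the frame change is
\begin{equation*}
\tilde y_c^2=\pr_2\circ\pp^*(H_c)=\sum_a\frac{\partial H_c}{\partial x_a}\Big|_{\xi=0}\,y_a^2+\frac{1}{2}\sum_{b,b'}\frac{\partial^2 H_c}{\partial\xi_b\partial\xi_{b'}}\Big|_{\xi=0}\,\eta_b^1\eta_{b'}^1 .
\end{equation*}
Modulo $\bigwedge^2\mcE$ the classes $[y_a^2]$ transform by the Jacobian of the underlying coordinate change, i.e.\ exactly as the $d x_a$; hence $\mcS\simeq\Omega^1_{\mcM_0}=(\mathcal{D}er\mcF)^*$ and the Atiyah group $H^1(\mcM_0,\mcS^*\otimes\bigwedge^2\mcE)$ is canonically $H^1(\mcM_0,\mathcal{D}er\mcF\otimes\bigwedge^2\mcE)$, which is precisely the group containing $\omega_2$.

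Next I would compute the Atiyah cocycle of $\omega_{\mcP_2}$. On each $\mcU_i$ choose the local splitting $\sigma_i$ of \eqref{eq exact sequence graded man of degree 2} sending $[y_a^2]\mapsto y_a^2$. On an overlap the two splittings differ by a map $\mcS\to\bigwedge^2\mcE$, and the displayed frame change yields
\begin{equation*}
(\sigma_j-\sigma_i)([\tilde y_c^2])=\frac{1}{2}\sum_{b,b'}\frac{\partial^2 H_c}{\partial\xi_b\partial\xi_{b'}}\Big|_{\xi=0}\,\eta_b^1\eta_{b'}^1 .
\end{equation*}
Under the identification $\mcS^*\simeq\mathcal{D}er\mcF$ (with $[\tilde y_c^2]^*\leftrightarrow\partial/\partial\tilde x_c$) this family is the \v{C}ech $1$-cocycle
\begin{equation*}
c_{ij}=\frac{1}{2}\sum_{c}\frac{\partial}{\partial\tilde x_c}\otimes\sum_{b,b'}\frac{\partial^2 H_c}{\partial\xi_b\partial\xi_{b'}}\Big|_{\xi=0}\,\eta_b^1\eta_{b'}^1\ \in\ \big(\mathcal{D}er\mcF\otimes{\textstyle\bigwedge^2}\mcE\big)(\mcU_i\cap\mcU_j),
\end{equation*}
whose class is $\omega_{\mcP_2}$. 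I would stress that only the even transition $H_c$ enters, so the odd transitions $G_d$ and the higher-weight coordinates $y_a^{2v},\eta_b^{2v+1}$ for $v\geq 2$ play no role, since they lie in $\mcI_2$.

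Finally I would invoke the definition of the Donagi--Witten first obstruction class from Appendix \ref{sec First obstruction class omega_2} and Section \ref{sec Donagi and Witten construction}: the class $\omega_2\in H^1(\mcM_0,\bigwedge^2\mcE\otimes\mathcal{D}er\mcF)$ is represented precisely by the quadratic-in-$\xi$ part of the even coordinate transitions of $\mcM$, that is by the same cocycle $c_{ij}$ (up to the standard normalisation). Comparing the two representatives then gives $\omega_{\mcP_2}=\omega_2$. The main obstacle I anticipate is purely bookkeeping: confirming that the identification $\mcS^*\otimes\bigwedge^2\mcE\cong\bigwedge^2\mcE\otimes\mathcal{D}er\mcF$ is the natural one used to define $\omega_2$, and aligning the factor $\tfrac12$ and the sign conventions between the Atiyah-class formalism for \eqref{eq exact sequence graded man of degree 2} and the Donagi--Witten formalism for $\omega_2$; once the conventions are matched, the two cocycles are manifestly identical.
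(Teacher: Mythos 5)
Your proposal is correct and follows essentially the same route as the paper: identify the extension \eqref{eq exact sequence graded man of degree 2} for $\mcP_2$, show $\mcS\simeq\Omega^1_{\mcM_0}$, compute the Atiyah cocycle from the weight-$2$ part of the transition functions $\pr_2\circ\pp^*(H_c)=\frac{\partial F_c}{\partial x_l}y_l^2+\frac12 G_c^{ij}\eta_i^1\eta_j^1$, and match it with the Green--Rothstein representative of $\omega_2$. The only cosmetic differences are that the paper identifies $\mcS$ via the $T^2\mcM$ realization (Theorem \ref{theor main construction 2}) rather than by direct frame-change computation, and extracts the cocycle as $\tilde\psi_{\lambda\mu}\circ\tilde\phi_{\lambda\mu}^{-1}$ rather than as a difference of local splittings.
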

\begin{proof}
	First of all let us study Exact Sequence (\ref{eq exact sequence graded man of degree 2}) for $\mcP_2$. By Proposition \ref{prop P_0= P_1=} we have $\mcO_{\mcP_1} \simeq \bigwedge \mcE \simeq \mcO_{\gr \mcM}$. In particular, $\bigwedge^2 (\mcO_{\mcP_1})_1 \simeq \bigwedge^2 \mcE$. Further, by Theorem \ref{theor main construction 2} we have
\begin{align*}
\mcS= (\mcO_{\mcP_2})_2 / (\mcO_{\mcP_1})_2 \simeq
 [(\mcO_{T^2\mcM})_2/(\mcR \cap (\mcO_{T^2\mcM})_2)] / [\mcO_{T\mcM}/(\mcR \cap \mcO_{T\mcM})]_2.
\end{align*}
The sheaf in the right hand side is a locally free sheaf with local basis $\ddot{x}'_a$, where $\ddot{x}'_a$ is the image of standard local coordinate $\ddot{x}_a$ in $T^2\mcM$.  The correspondence $\ddot{x}'_a \mapsto \dd x_a$ determines an isomorphism $\mcS\simeq \Omega$, where $\Omega \simeq (\mathcal{D}er\mathcal F)^*$ is the sheaf of $1$-forms on $\mcM_0$. Now we see that $\omega_{\mcP_2}\in H^1(\mcM_0, \bigwedge^2\mathcal E\otimes\mathcal{D}er\mathcal F)$.

	Let us write the Atiyah cocycle $(h_{\lambda\mu})$ for $\omega_{\mcP_2}$ in local coordinates. 
	Consider two charts $\mathcal U_{\lambda}$ and $\mathcal U_{\mu}$  on $\mcM$ with local coordinates  $(x_a, \xi_b)$ and $(y_a, \eta_b)$. 
	Let in $\mathcal U_{\lambda}\cap \mathcal U_{\mu}$ we have the following transition functions 
	\begin{equation}\label{eq starting smf m=2 4}
	\begin{split}
	&z_c= F_c + \frac12 G_c^{ij} \xi_{i}\xi_{j}+\cdots;\\ 
	&\theta_d = H^{j}_d\xi_j+\cdots,
	\end{split}
	\end{equation}
	where $F_c=F_c(x_a)$, $G_d^{ij}=G_d^{ij}(x_a)$, $H^{j}_i=H^{j}_i(x_a)$ are functions depending only on even coordinates $\{x_a\}$ and we denote by dots the terms in (\ref{eq starting smf m=2 4}) from $\mcJ^3$.

	Let us write the transition functions of $\mcP_2$ in  $\F_2(\mathcal U_{\lambda})\cap \F_2(\mathcal U_{\mu})$. For this we apply $\pp^*$ to both sides of (\ref{eq starting smf m=2 4}) and write transition functions for graded components of weight $0$, $1$ and $2$.  We get
	\begin{equation}\label{eq transit P_2}
	\begin{split}
	\pp^*(z_c)= &\pp^*(F_c + \frac12 G_c^{ij} \xi_{i}\xi_{j} +\cdots ) = \\
	& F_c(y^0_a) + \frac{\partial F_c}{\partial x_l}(y^0_a) y_l^2 + \frac12 G_c^{ij}(y^0_a)  \eta^1_{i}\eta^1_{j} + \cdots; \\
	\pp^*(\theta_d) = &\pp^*(H^{j}_d\xi_j) + \cdots = H^{j}_b(y^0_a)\eta^1_j+ \cdots,
	\end{split}
	\end{equation}
	where $\pp^*(x_a) = y_a^0+y_a^2+\cdots$ and $\pp^*(\xi_b) = \eta_b^1+\eta_b^3+\cdots$ and we denote by $\cdots$ in (\ref{eq transit P_2}) the members of weight $\geq 3$. 
Summing up the transition functions of $\mcP_2$ in  $\F_2(\mathcal U_{\lambda})\cap \F_2(\mathcal U_{\mu})$  are
\begin{equation}\label{eq transit P_2 final}
\begin{split}
\pp^*(z_c)_0&=   F_c(y^0_a); \\
\pp^*(\theta_b)_1 &= H^{j}_b(y^0)\eta^1_j;\\
\pp^*(z_c)_2&=   \frac{\partial F_c}{\partial x_l}(y^0_a) y_l^2 + \frac12 G_c^{ij}(y^0_a)  \eta^1_{i}\eta^1_{j}. \\
\end{split}
\end{equation}
We	denote by $\tilde\psi_{\lambda\mu}$ the automorphism (\ref{eq transit P_2 final}). Denote by $\tilde\phi_{\lambda\mu}$ the following change of coordinates:
\begin{align*}
\pp^*(z_c)_0=   F_c(y^0_a); \quad
\pp^*(\theta_b)_1 = H^{j}_b(y^0)\eta^1_j; \quad \pp^*(z_c)_2=  \frac{\partial F_c}{\partial x_l}(y^0_a) y_l^2. 
\end{align*}
Then $(h_{\lambda\mu} = \tilde\psi_{\lambda\mu} \circ \tilde\phi_{\lambda\mu}^{-1})$ can be regarded as an Atiyah cocycle of $\omega_{\mcP_2}$ written in $\F_2(\mcU_{\lambda})$. 
We see that we have got  the representative $(\omega_2)_{\lambda\mu}$, see (\ref{eq omega_2}). The proof is complete. 
\end{proof}

\begin{corollary}
	Let $\mcM$ be a supermanifold and $\mcP = \varprojlim \mcP_n$ be a its $\Z^{\geq 0}$-covering. The Donagi--Witten construction is equivalent to construction of  the graded manifold  $\mcP_{2}$. 
\end{corollary}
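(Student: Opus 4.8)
The plan is to deduce the equivalence directly from Theorem~\ref{theor omega_p = omega M}, combined with the classification of degree-$2$ graded manifolds recorded just above it. Recall that the Donagi--Witten construction associates to a supermanifold $\mcM$ the first obstruction class $\omega_2 \in H^1(\mcM_0, \bigwedge^2\mcE\otimes\mathcal{D}er\mcF)$ to the splitting of $\mcM$, together with the underlying split data $\gr\mcM \simeq (\mcM_0, \bigwedge\mcE)$. On the other hand, Exact Sequence~(\ref{eq exact sequence graded man of degree 2}) shows that, once $\mcN_0$ and $\mcN_1$ are fixed, a graded manifold $\mcN_2$ of degree $2$ is a complete invariant of its Atiyah class $\omega_{\mcN_2}$. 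Thus, to prove equivalence, it suffices to match the classifying data on the two sides.

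First I would record the three pieces of data attached to $\mcP_2$. By Proposition~\ref{prop P_0= P_1=} we have $\mcP_0 \simeq \mcM_0$ and $\mcP_1 \simeq \gr\mcM \simeq (\mcM_0, \bigwedge\mcE)$, so that the degree-$0$ and degree-$1$ parts of $\mcP_2$ coincide with the base and with the split retract that enter the Donagi--Witten construction. Next, Theorem~\ref{theor omega_p = omega M} gives $\omega_{\mcP_2} = \omega_2$, so the extension class classifying $\mcP_2$ through~(\ref{eq exact sequence graded man of degree 2}) is exactly the Donagi--Witten obstruction class.

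Putting these together, the graded manifold $\mcP_2$ is reconstructed from the pair $\big((\mcM_0, \bigwedge\mcE), \omega_2\big)$ precisely as the extension determined by its Atiyah class, which is the very datum produced by the Donagi--Witten construction; conversely $\omega_2 = \omega_{\mcP_2}$ is recovered from $\mcP_2$. Hence the two constructions determine one another, and are equivalent. The only point requiring care is to make explicit the meaning of \emph{equivalent}: namely, that the assignment $\mcM \mapsto \mcP_2$ and the Donagi--Witten assignment $\mcM \mapsto \omega_2$ carry the same information. This follows once one knows, from~(\ref{eq exact sequence graded man of degree 2}), that the Atiyah class is a complete invariant of a degree-$2$ graded manifold with prescribed lower-degree pieces. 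Since this completeness is already built into the classification above, the corollary is an immediate consequence of Theorem~\ref{theor omega_p = omega M}.
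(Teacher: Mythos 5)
Your proposal is correct and follows essentially the same route as the paper, which simply cites Theorem~\ref{theor omega_p = omega M} together with Theorem~\ref{theor Don Witten theorem} (the latter being exactly the fact you recall, that the Donagi--Witten sheaf $\mathcal D_{\omega_2}$ is the extension classified by $\omega_2$). Your extra remark that the Atiyah class is a complete invariant of $\mcN_2$ once $\mcN_0$ and $\mcN_1$ are fixed is just an explicit spelling-out of why the two matching classes force the equivalence, which the paper leaves implicit.
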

\begin{proof}
	The statement follows from  Theorem \ref{theor omega_p = omega M} and	Theorem \ref{theor Don Witten theorem}. 
\end{proof}

\begin{remark}\label{rem graded define super dim 2}
	Comparing Formulas (\ref{eq starting smf m=2 4}) and (\ref{eq transit P_2 final}), we observe that Formulas (\ref{eq transit P_2 final}) contain the whole information about Formulas (\ref{eq starting smf m=2 4}) modulo  $\mathcal J^3$.
	In other words using Formulas (\ref{eq transit P_2 final}) we can reconstruct Formulas (\ref{eq transit P_2 final}) modulo $\mathcal J^3$, and therefore reconstruct the structure sheaf of the supermanifold $\mcM$ modulo $\mathcal J^3$. 
\end{remark}

\section{About non-split supermanifolds}

Recall that a supermanifold $\mcM=(\mcM_0,\mcO_{\mcM})$ is called {\it split} if $\mcO_{\mcM}\simeq \bigwedge \mcE$, where $\mcE$ is a locally free sheaf on $\mcM_0$. If $\mcO_{\mcM}\not\simeq \bigwedge \mcE$ the supermanifold $\mcM$ is called {\it non-split}. Consider the following example. 

\begin{example}\label{ex superquandric}
	A superquadric $\mathcal Q$ is a supermanifold of dimension $1|2$ with the base space $\mathbb{CP}^1 $. This is a particular case of an isotropic super-Grassmannian introduced in \cite{Manin}. We give a description of $\mathcal Q$ using language of local charts and atlases. For a description of  $\mathcal Q$ as an equation in the protective superspace $\mathbb{CP}^{2|2}$ see \cite[Example 2.14]{OniCOT}. 
	
	 We can cover $\mathbb{CP}^1$ with two standard charts $\mathbb{CP}^1=U_0\cup U_1$.  Let $x$ and $y$ are standard coordinates in $U_0$ and in $U_1$, respectively, with $y=\frac{1}{x}$ in $U_0\cap U_1$.  Consider two superdomains 
	$$
	\mcU_0 = (U_0,\mcF_{U_0}\otimes_{\C} \bigwedge (\xi_1,\xi_2))\quad \text{and} \quad 	\mcU_1 = (U_1,\mcF_{U_1}\otimes_{\C} \bigwedge (\eta_1,\eta_2)),
	$$ 
	where $\mcF_{U_i}$ is the sheaf of holomorphic functions on $U_i$, $i=0,1$, and $\bigwedge (\zeta_1,\zeta_2)$ is the Grassmann algebra over $\C$ generated by $\zeta_1,\zeta_2$.  Summing up, we can regard $(x,\xi_1,\xi_2)$ as coordinates in $\mcU_0$ and $(y,\eta_1,\eta_2)$ as coordinates in $\mcU_1$. Define the supermanifold $\mathcal Q$ in terms of local charts $\mcU_i$ and the following transition functions in $U_0\cap U_1$
	\begin{align*}
	&y= \frac{1}{x} + \frac{1}{x^3}\xi_1 \xi_2;\\
	&\eta_j= \frac{1}{x^2} \xi_j, \,\, j=1,2. 
	\end{align*}
	The obtained supermanifold $\mathcal Q$ is non-split. 
	Historically, this was one of the first examples of such supermanifolds, see \cite{Ber, Green, Manin}.
\end{example}

Non-split supermanifolds plays an important rule in physics. For instance in \cite{Witten not projected} it was shown that for genus $g\geq 5$ the moduli space of super Riemann surfaces is not split. The property of this moduli space to be non-split implies the following \cite[Abstract]{Witten not projected}:

\smallskip

``{\it Physically, this means that certain approaches to superstring perturbation theory that are very powerful in low orders have no close analog in higher orders. Mathematically, it means that the moduli space of super Riemann surfaces cannot be constructed in an elementary way starting with the moduli space of ordinary Riemann surfaces. It has a life of its own.}''
	
	\smallskip
	
{\bf The transition from a supermanifold $\mcM$ to a $\Zo$-covering $\mcP$ resolves in a certain sense the problem of $\mcM$ to be non-split and reduces the study of geometry of $\mcM$ to a classical problem.} 

\smallskip

We will illustrate this in the following example.

	\begin{example}\label{ex covering of Q}
	Denote by $\mcP_{\mathcal Q}$ a $\Zo$-covering of $\mathcal Q$. Then $\mcP_{\mathcal Q}$ is an infinite dimensional graded manifold, with the base space $\mathbb{CP}^1$ and which has the following transition functions 
	\begin{align*}
\text{weight $0$:}\quad	&y= \frac{1}{x};\\
\text{weight $1$:}\quad	&\theta_j= \frac{1}{x^2} \zeta_j, \,\, j=1,2;\\
\text{weight $2$:}\quad	&w= -\frac{1}{x^2}z + \frac{1}{x^3}\zeta_1 \zeta_2;\\
\text{weight $\geq 3$:}\quad	&\cdots
	\end{align*}
	Here the coordinates $x,y$ have weight $0$, $\xi_i, \eta_j$ have weight $1$ and $z$ has weight $2$. (Compare with Proposition \ref{prop P_0= P_1=} and Theorem  \ref{theor omega_p = omega M}.)

	The structure sheaf of $\mcP_{\mathcal Q}$ is determined by a compatible family of vector bundles over $\mathbb{CP}^1$. Let us describe $(\mcP_{\mathcal Q})_0$, $(\mcP_{\mathcal Q})_1$ and $(\mcP_{\mathcal Q})_0$ explicitly, see Proposition \ref{prop P_0= P_1=} and Theorem  \ref{theor omega_p = omega M}. 
	
	 Graded functions of weight $0$ on $\mcP_{\mathcal Q}$ are usual local holomorphic function on $\mathbb{CP}^1$. Further graded functions of weight $1$ are local sections of a vector bundle of rank $2$ isomorphic to $\mcO(-2)\oplus \mcO(-2)$. Graded functions of weight $2$ are sections of a vector bundle $\mcD$, which is determined by the following exact sequence
	$$
	0\to \mcO(-4) \longrightarrow \mcD \longrightarrow \mcO(-2) \to 0
	$$	
with the  Atiyah class 
$$
\omega_{\mcP_{\mathcal Q}}\in H^1(\mathbb{CP}^1,\mcO(-2)^* \otimes \mcO(-4)) \simeq H^1(\mathbb{CP}^1,\mcO(-2))\simeq \C, \quad \omega_{\mcP_{\mathcal Q}} \ne 0.
$$

The second important observation is that the following morphism of sheaves of superalgebras  
$$
(\pp^{(k)})^*: \mcO_{\mathcal Q} \to \mcO_{\mcP^{(k)}_{\mathcal Q}}, \quad k\geq 2,
$$ 
is invective. In other words, the structure sheaf of a non-split supermanifold is realized as a subsheaf of a $\Z$-graded sheaf $\mcO_{\mcP^{(k)}_{\mathcal Q}}$ for any $k\geq 2$. 
\end{example}

The observation from Example \ref{ex covering of Q} holds for any supermanifold. 

\begin{theorem}
	Let $\mcM$ be a (non-split) supermanifold of dimension $m|n$ and $\mcP$ be its $\Zo$-covering. Then the following morphism of sheaves of superalgebras  
	$$
	(\pp^{(k)})^*: \mcO_{\mcM} \to \mcO_{\mcP^{(k)}}, \quad k\geq n,
	$$ 
	is invective.
\end{theorem}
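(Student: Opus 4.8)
The plan is to establish injectivity locally, by exhibiting on each chart an explicit left inverse of $(\pp^{(k)})^*$. Injectivity of a morphism of sheaves may be checked on a basis of open sets, so I would fix a chart $\mcU=\mcU_\lambda$ of $\mcM$ with coordinates $(x_a,\xi_b)$, $a=1,\dots,m$, $b=1,\dots,n$, over which every local section of $\mcO_\mcM$ is written uniquely as $F=\sum_I f_I(x)\,\xi^I$, the sum being over subsets $I=\{i_1<\cdots<i_p\}\subseteq\{1,\dots,n\}$ with $\xi^I=\xi_{i_1}\cdots\xi_{i_p}$. On the covering chart $\mcV=\F(\mcU)$, with adapted coordinates $(y_a^s,\eta_b^t)$ ($s$ even, $t$ odd), recall that $\pp^*(x_a)=y_a^0+y_a^2+\cdots$ and $\pp^*(\xi_b)=\eta_b^1+\eta_b^3+\cdots$.

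First I would introduce the weight-preserving algebra homomorphism
$$
r:\mcO_{\mcV}\longrightarrow \mcF_{\mcV_0}\otimes{\textstyle\bigwedge}(\eta_1^1,\dots,\eta_n^1)
$$
which is the identity on the generators $y_a^0$ and $\eta_b^1$ and sends every other graded generator (that is, $y_a^s$ for $s\ge2$ and $\eta_b^t$ for $t\ge3$) to zero; concretely, $r$ is the augmentation killing all positive-weight generators except the weight-one odd ones $\eta_b^1$. The key step --- and the only place where the hypothesis $k\ge n$ enters --- is the verification that $r$ descends to $\mcP^{(k)}=(\mcP_0,\mcO_{\mcP}/\mcI_k)$. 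Since $r$ preserves weights and its target is concentrated in weights $0,\dots,n$ (there being exactly $n$ odd generators $\eta_b^1$), every homogeneous element of weight $\ge n+1$ is sent to $0$. As $\mcI_k$ is generated by elements of weight $\ge k+1\ge n+1$, this gives $r(\mcI_k)=0$, so $r$ factors through an algebra homomorphism $\bar r:\mcO_{\mcP^{(k)}}\to\mcF_{\mcV_0}\otimes\bigwedge(\eta^1)$.

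Finally I would compute $\bar r\circ(\pp^{(k)})^*$ on $F$. Applying $r$ to the Taylor expression $(\pp^{(k)})^*(F)=F(y^0+y^2+\cdots,\ \eta^1+\eta^3+\cdots)\bmod\mcI_k$ of Remark \ref{rem series for p^*} sets the even variation $y^2+y^4+\cdots$ and the odd tails $\eta^3+\eta^5+\cdots$ to zero, and since $r$ is an algebra map the series collapses to $\sum_I f_I(y^0)\,(\eta^1)^I$. Under the identification $y_a^0\leftrightarrow x_a$, $\eta_b^1\leftrightarrow\xi_b$, which identifies $\mcF_{\mcV_0}\otimes\bigwedge(\eta^1)$ with $\mcO_{\mcU}$, this is precisely $F$; hence $\bar r\circ(\pp^{(k)})^*=\id_{\mcO_{\mcU}}$. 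A map with a left inverse is injective, so $(\pp^{(k)})^*$ is injective over $\mcU$, and therefore injective as a morphism of sheaves. The main obstacle is exactly the descent of $r$ to the truncation $\mcO_{\mcP^{(k)}}$: one has to recognize that the sole effect of the ideal $\mcI_k$ is to discard weights above $k$, and that keeping the weight-one odd layer $\eta_b^1$ is harmless precisely because $\bigwedge(\eta^1)$ has top weight $n\le k$. The remaining points --- that $r$ is a homomorphism of superalgebras and that the Taylor series collapses under it --- are routine.
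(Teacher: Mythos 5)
Your proof is correct and follows essentially the same route as the paper's: both reduce to a single coordinate chart and exploit the explicit Taylor-series formula for $(\pp^{(k)})^*$, with the hypothesis $k\geq n$ entering exactly where you place it (the lowest-weight term of $(\pp^{(k)})^*(\xi_1\cdots\xi_n)$ has weight $n$, so it survives the truncation precisely when $k\geq n$). The paper merely asserts local injectivity from that formula, whereas you make the mechanism explicit by constructing the left inverse $\bar r$ --- the weight-preserving retraction killing all positive-weight generators except the $\eta^1_b$ --- which is a welcome elaboration of the same argument rather than a different one.
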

\begin{proof}
	Locally, for superdomains, the morphism $(\pp^{(k)})^*$ is defined by Formula (\ref{eq p^*_k(F) Taylor series}). Therefore $(\pp^{(k)})^*$ is locally invective for any $k\geq n$. But this implies the result, since if the image of a function is $0$, there exists a superdomain where this image is $0$. 
\end{proof}

\section{Supermanifolds of odd dimension $2$}

Recall that the correspondence  $\mcM \mapsto \F_{2}(\mcM)$, see Proposition \ref{prop F_n is a functor},  is a functor  from the category of supermanifolds to the category of graded manifolds of degree $2$. The functor  $\F_2$ restricted to the category of supermanifolds of odd dimension $2$  has  important properties, which we are going to discuss in this section. 

\begin{definition}
	We call a functor $F:C\to D$ an embedding of the category $C$ to the category $D$, if it is faithful and injective on objects. In other words the mapping $F$ is an embedding if it is injective on objects and on morphisms. 
\end{definition}

\begin{theorem}\label{theor we can recover a supermanifold}
	The functor $\F_2$ is an embedding of the category of supermanifolds of odd dimension $2$ into the category of graded manifolds of degree $2$.
\end{theorem}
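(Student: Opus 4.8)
The plan is to check the two halves of the definition of an embedding — injectivity on objects and faithfulness — and for both the decisive fact is that a supermanifold $\mcM$ of odd dimension $2$ satisfies $\mcJ^3=0$ locally. Indeed $\mcJ$ is generated by the two odd coordinates $\xi_1,\xi_2$, so that $\mcJ^2=\ideal{\xi_1\xi_2}$ and $\mcJ^3=0$. Hence every statement of the form ``determined modulo $\mcJ^3$'' becomes ``determined completely,'' and the degree-$2$ covering $\mcP_2=\F_2(\mcM)$ will see all of the gluing and morphism data of $\mcM$.

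For injectivity on objects I would use Remark \ref{rem graded define super dim 2}: the transition functions (\ref{eq transit P_2 final}) of $\F_2(\mcM)$ determine the transition functions (\ref{eq starting smf m=2 4}) of $\mcM$ modulo $\mcJ^3$. In odd dimension $2$ the expansions (\ref{eq starting smf m=2 4}) carry no $\mcJ^3$-terms at all, so (\ref{eq transit P_2 final}) recovers $F_c$, $H^j_d$ and $G^{ij}_c$ exactly, and therefore the whole gluing cocycle of $\mcM$. Thus $\mcM$ is reconstructed from $\F_2(\mcM)$, and this reconstruction is a left inverse to the object map of $\F_2$; in particular $\F_2(\mcM)\simeq\F_2(\mcM')$ forces $\mcM\simeq\mcM'$.

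For faithfulness I would work locally through the coordinate formula (\ref{eq F of a morphism in coordinates}), restricted to weights $\leq 2$ since $\F_2(\psi)$ is a morphism of degree-$2$ graded manifolds. A morphism $\psi$ is given by $\psi^*(u_c)=F_c(x_a,\xi_b)$ and $\psi^*(\theta_d)=G_d(x_a,\xi_b)$; writing $F_c=F_{c,0}(x)+F^j_c(x)\xi_j+F^{12}_c(x)\xi_1\xi_2$ and substituting $\pp^*(x_a)=y^0_a+y^2_a+\cdots$, $\pp^*(\xi_j)=\eta^1_j+\eta^3_j+\cdots$, the weight $0$, $1$ and $2$ parts of $\pp^*(F_c)$ come out as $F_{c,0}(y^0)$, $\sum_j F^j_c(y^0)\eta^1_j$ and $\sum_l \tfrac{\partial F_{c,0}}{\partial x_l}(y^0)\,y^2_l+F^{12}_c(y^0)\,\eta^1_1\eta^1_2$. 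These three components recover $F_{c,0}$, $F^j_c$ and $F^{12}_c$, hence $F_c$, because $\mcJ^3=0$ leaves no further coefficient to determine; the odd images $G_d$ are handled the same way. Consequently $\F_2(\psi)=\F_2(\psi')$ implies $\psi^*=(\psi')^*$, so $\F_2$ is faithful.

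The hard part is conceptual rather than computational: one must be sure that passing to weight $\leq 2$ loses nothing, which is exactly where odd dimension $2$ is essential — for larger odd dimension the reconstruction only recovers $\mcO_{\mcM}/\mcJ^3$ and $\F_2$ fails to be an embedding, so one would have to pass to $\F_n$ with $n$ large. Organizing the object part cleanly also requires checking that the chartwise reconstruction of Remark \ref{rem graded define super dim 2} glues to a genuine isomorphism of structure sheaves; this can be arranged using the uniqueness of the lifts in Theorem \ref{theor morphism psi can be covered by Psi} applied to the given isomorphism $\F_2(\mcM)\simeq\F_2(\mcM')$ and its inverse. Once these points are in place, the Taylor expansions above, which terminate precisely because $\mcJ^3=0$, finish the proof.
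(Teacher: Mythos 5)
Your proof is correct and follows essentially the same route as the paper: both arguments rest on reading off $F_c$, $H^j_d$ and $G^{ij}_c$ from the weight $0$, $1$, $2$ components of the transition functions (respectively of the images of coordinates under a morphism), and on the fact that $\mcJ^3=0$ in odd dimension $2$ so nothing is lost. The paper's proof additionally characterizes the essential image of $\F_2$ via the condition (\ref{eq O_2/O_1O_1 = Omega}) and checks the cocycle condition for the reconstructed supermanifold using functoriality, but that extra material is not needed for the embedding claim itself; the only blemish in your write-up is the spurious odd term $F^j_c(x)\xi_j$ in the expansion of $\psi^*(u_c)$, which must vanish by parity and is harmless.
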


\begin{proof} 
	By definition, for a supermanifold $\mcM$ the graded manifold $\F_2(\mcM)$ of degree $2$ is equal to $\mcP_2$ from Theorem \ref{theor construction of a covering for any supermanifld}. The correspondence $\mcM \mapsto \mcP_2$  is injective if $\dim \mcM=m|2$. Explicitly the correspondence $\mcM \mapsto \mcP_2$ is given by Formulas  (\ref{eq starting smf m=2 4}) and Formulas (\ref{eq transit P_2 final}). (Note that in Formulas  (\ref{eq starting smf m=2 4}) we do not have $\cdots$ if $\dim \mcM=m|2$.)

	In fact if a graded manifold $\mcN$ of degree $2$ satisfies an additional condition, there is  a supermanifold $\mcM$ of odd dimension $2$ such that $\F_2(\mcM) \simeq \mcN$.  
  Indeed, let $\mcN=(\mcN_0,\mcO_{\mcN})$ be a graded manifold of degree $2$, of graded dimension $(m|2|m)$  such that  in addition the following holds
		\begin{equation}\label{eq O_2/O_1O_1 = Omega}
		(\mcO_{\mcN})_2/((\mcO_{\mcN})_1 \cdot (\mcO_{\mcN})_1) \simeq \Omega^1(\mcN_0),
		\end{equation}
		where $\Omega^1(\mcN_0)$ is the sheaf of $1$-form on $\mcN_0$. Then for any such $\mcN$ there exists unique up to isomorphism supermanifold $\mcM$ of odd dimension $2$ such that $\F_2(\mcM)\simeq \mcN$.
	To any such $\mcN$ we can assign the following exact sequence 
	\begin{equation}\label{eq exact sequence for F(M)}
	0\to (\mcO_{\mcN})_1\cdot (\mcO_{\mcN})_1\to (\mcO_{\mcN})_2 \to \Omega^1(\mcN_0)\to 0.
	\end{equation}
	Consider adapted coordinates $(x_a,z_s= d (x_s))$  in $\T(\mcN_0)$. Taking any local splitting of the short exact sequence \eqref{eq exact sequence for F(M)} we find a local graded chart with coordinates $(x_a, \theta_b, z_s)$, where $(\theta_1,\theta_2)$ is a local basis of the locally free sheaf $(\mcO_{\mcN})_1$. In two such graded charts $\mathcal V$ and $\mathcal V'$ with coordinates $(x_a, \theta_b, z_s)$ and $(x'_{a'}, \theta'_{b'}, z'_{s'})$ we have the following transition function 
	\begin{equation}\label{eq transition function for N of degree 2}
	\begin{split}
	&	x'_{a'}= X_{a'}(x);\\ 
	&\theta'_{b'} =  Y^b_{b'}(x) \theta_b, \,\,\, b'=1,2;\\
	& z'_{s'} = \frac{\partial X_{s'}}{\partial x_b}z_b  +  Z_{s'}\theta_{1}\theta_{2}.
	\end{split}
	\end{equation}
	
	Let $\mathcal U$ and $\mathcal U'$ be two superdomain with $\mathcal U_0=\mathcal V_0$ and $\mathcal U'_0=\mathcal V'_0$ and with coordinates $(x_a,\xi_b)$ and $(x'_{a'},\xi'_{b'})$, respectively. We define in $\mathcal U\cap \mathcal U'$ the following transition functions
	\begin{align*}
	&	x'_{a'}= X_{a'}(x) +  Z_{a'}\xi_{1}\xi_{2} ;\\ 
	& \xi'_{b'} =  Y^b_{b'}(x) \xi_b,\,\,\, b'=1,2.
	\end{align*}
		Let us prove that these transition functions satisfy the cocycle condition. Indeed, consider three charts $\mathcal U_1, \mathcal U_2$ and $\mathcal U_3$ with $\mathcal U_1\cap \mathcal U_2\cap \mathcal U_3\ne \emptyset$. Denote by $T_{ij}$ the transition function  $T_{ij}: \mathcal U_i\to \mathcal U_j$. Consider the following composition of maps
			$$
	T_{31}\circ T_{23}\circ  T_{12} =:  R.
	$$
	Since $\F_2$ is a functor, we get
	$$
	\F_2(T_{31})\circ \F_2(T_{23})\circ \F_2( T_{12}) = \F_2( R).
	$$
	The composition $\F_2(T_{31})\circ \F_2(T_{23})\circ \F_2( T_{12})$ is equal to $\id$, since the cocycle condition for the graded manifold $\mathcal N$ holds true. Therefore $\F_2( R)=\id$. Now we write the morphism $R$ in local coordinates and write the corresponding formulas for $\F_2(R) =\id$ as in Formulas (\ref{eq starting smf m=2 4}) and (\ref{eq transit P_2 final}). We see that if the resulting morphism (\ref{eq transit P_2 final}) is identical, the morphism $R$ is identical as well. A similar argument shows also that $\F_2$ is injective on morphisms. 
\end{proof}

\begin{remark}
	If the odd dimension of $\mcM$ is graded than two then 
	$F_2(\mcM)$ keeps only information on the ringed space $(\mcM_0, \mcO_{\mcM}/\mathcal J^3)$.  
\end{remark}

\begin{remark}
	We saw in the proof of Theorem \ref{theor we can recover a supermanifold} that the functor $\F_2$ is not surjective on objects. We have to add additional conditions (\ref{eq O_2/O_1O_1 = Omega}).  Further, the functor $\F_2$ is not full. 
	In other words, for two supermanifolds $\mcM_1$ and $\mcM_2$, the correspondence 
	$$
	\F_2:\Hom(\mcM_1,\mcM_2)\to \Hom(\F_2(\mcM_1),\F_2(\mcM_2))
	$$
	between hom sets may be not surjective. To see this let us take a split supermanifold $\mcM$ of odd dimension $2$ and any non-trivial bundle automorphism $\gamma$ of $\Omega^1(\mcN_0)$.  By definition of a split supermanifold we can find local coordinates such that transition function (\ref{eq starting smf m=2 4}) can be reduced to the following form
	\begin{align*}
	z_c= F_c ,\quad \theta_d = H^{j}_d\xi_j.
	\end{align*}	
	Then for $\F_2(\mcM)$ in Formulas  (\ref{eq transit P_2 final}) we get that $G_{s'}^{b_1b_2}=0$. Therefore, the $2$-component $(\mcO_{\mcN})_2$ of the structure sheaf of $\mcN:=\F_2(\mcM)$ has the form 
	$$
	(\mcO_{\mcN})_2=  (\mcO_{\mcN})_1\cdot (\mcO_{\mcN})_1 \oplus \Omega^1(\mcN_0).
	$$ 
	In other words, Sequence (\ref{eq exact sequence for F(M)}) is split in this case. Now we can define an automorphism $\Phi $ of $(\mcO_{\mcN})$ determined by 
	$$
	\Phi|_{(\mcO_{\mcN})_0} = \id, \quad \Phi|_{(\mcO_{\mcN})_1} = \id, \quad \Phi|_{(\mcO_{\mcN})_2} =(\id,\gamma). 
	$$
	Clearly such a morphism is not an image of any automorphism of $\mcM$.  
\end{remark}

\section{A $\Z^{\geq 0}$-covering of a Lie superalgebra and a Lie supergroup}

\subsection{A $\Z$-covering  and $\Z^{\geq 0}$-covering of a Lie superalgebra}

We start with the following definitions. If $a\in \Z$, then $\bar a\in \Z_2$ is the parity of $a$.

\begin{definition}\label{def Z covering, algebras}  A $\Z$-covering of a Lie superalgebra $\g$ is a $\Z$-graded superalgebra Lie $\p = \oplus_{a\in \Z} \p_{a}$ together with a homomorphism $p: \p\to \g$ of  Lie superalgebras such that
	$p|_{\p_a}: \p_a \to \g_{\bar a}$ is a bijection for any $a\in \Z$.
\end{definition}

\begin{definition}\label{def Z^0 covering, algebras}  A $\Z^{\geq 0}$-covering of a Lie superalgebra $\g$ is a $\Z$-graded superalgebra Lie $\p = \oplus_{a\geq 0} \p_{a}$ with support $\Z^{\geq 0}=\{ 0,1,2,\ldots\}$ together with a
	homomorphism $p: \p\to \g$ of  Lie superalgebras such that
	$p|_{\p_a}: \p_a \to \g_{\bar a}$ is a bijection for any $a\geq 0$.
\end{definition}

Note that in both cases the Lie bracket of $\p$ is fully determined by the bracket of $\g$. Indeed, for $X\in \p_a$, $X'\in \p_{a'}$ we have $p([X, X']_{\p}) = [p(X), p(X')]_{\g} \in \g_{\bar a +\bar a'}$. Hence
$$
[X, Y]_{\p} = (p|_{\p_{a + a'}})^{-1}([p(X), p(X')]_{\g}).
$$
Moreover,  $p|_{\p_0} : \p_0 \to \g_{\bar 0}$ is a Lie algebra isomorphism and 
$p|_{\p_a}: \p_a\to \g_{\bar a}$ is a morphism of $\g_{\bar 0}$-module, where we identify the Lie superalgebras $\p_0$ and $\g_{\bar 0}$ via $p|_{\p_0}$.

\begin{proposition}\label{p:univ_1} 	
	Let $\g$ be a Lie superalgebra, $\aaa$ be a $\Z$-graded Lie superalgebra (or a $\Z$-graded Lie superalgebra with support $\Z^{\geq 0}$) and let $\psi:\aaa\to \g$ be a homomorphism of Lie superalgebras. If $\p$ is a $\Z$-covering of $\g$ (or $\Z^{\geq 0}$-covering of $\g$), then there exists unique $\Z$-graded homomorphism $\tilde{\psi}: \aaa\to \p$ such that the following diagram is commutative
	$$
	\begin{tikzcd}
	& \p  \arrow[d, "p"]\\
	\aaa \arrow[ur, dashrightarrow, "\exists ! \tilde{\psi}"]   \arrow[r, "\psi"]  & \g
	\end{tikzcd}
	$$
\end{proposition}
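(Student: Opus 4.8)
The plan is to build $\tilde\psi$ one graded degree at a time, exploiting that $p$ restricts to a bijection on each graded component. Since $\aaa$ carries a parity-matched $\Z$-grading, every homogeneous $X\in\aaa_n$ has parity $\bar n$, and because $\psi$ is a homomorphism of Lie superalgebras it preserves parity, so $\psi(X)\in\g_{\bar n}$. As $p|_{\p_n}\colon\p_n\to\g_{\bar n}$ is a bijection (Definition \ref{def Z covering, algebras}), I would define
$$
\tilde\psi(X) := (p|_{\p_n})^{-1}(\psi(X)) \in \p_n
$$
and extend by linearity. This map is manifestly $\Z$-graded, since it sends $\aaa_n$ into $\p_n$, and it satisfies $p\circ\tilde\psi=\psi$ by construction.

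First I would dispose of uniqueness, which comes essentially for free: if $\tilde\psi'$ is any $\Z$-graded lift with $p\circ\tilde\psi'=\psi$, then for homogeneous $X\in\aaa_n$ the element $\tilde\psi'(X)$ lies in $\p_n$ and satisfies $p(\tilde\psi'(X))=\psi(X)$, so injectivity of $p|_{\p_n}$ forces $\tilde\psi'(X)=\tilde\psi(X)$. Hence $\tilde\psi'=\tilde\psi$ on homogeneous elements, and thus everywhere. The whole content of the proposition therefore reduces to checking that the map defined above is a homomorphism of Lie superalgebras.

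The one substantive step is verifying that $\tilde\psi$ respects the bracket. For homogeneous $X\in\aaa_n$ and $Y\in\aaa_m$, both $\tilde\psi([X,Y])$ and $[\tilde\psi(X),\tilde\psi(Y)]$ lie in $\p_{n+m}$ — the former because $[X,Y]\in\aaa_{n+m}$, the latter because the bracket of $\p$ is graded. Since $p|_{\p_{n+m}}$ is injective, it suffices to show that $p$ sends both to the same element of $\g$. On one side $p(\tilde\psi([X,Y]))=\psi([X,Y])=[\psi(X),\psi(Y)]$ because $\psi$ is a homomorphism; on the other $p([\tilde\psi(X),\tilde\psi(Y)])=[p(\tilde\psi(X)),p(\tilde\psi(Y))]=[\psi(X),\psi(Y)]$ because $p$ is a homomorphism. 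The two expressions coincide, so injectivity of $p$ on $\p_{n+m}$ gives $\tilde\psi([X,Y])=[\tilde\psi(X),\tilde\psi(Y)]$.

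Finally I would note that the $\Z^{\geq 0}$-covering case is verbatim identical: there $\aaa$ has support in $\Zo$, so only degrees $n\geq 0$ occur, and the bijections $p|_{\p_n}$ for $n\geq 0$ supply exactly the inverses used above. The main (and essentially only) obstacle is the bracket computation, and even that is immediate once one observes that injectivity of $p$ on each graded piece allows one to transport the homomorphism identity valid in $\g$ back to $\p$.
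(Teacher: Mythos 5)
Your proposal is correct and follows essentially the same route as the paper: define $\tilde\psi$ on each graded piece via $(p|_{\p_n})^{-1}\circ\psi$, then transport the homomorphism identity back through the injectivity of $p$ on $\p_{n+m}$. You additionally spell out the uniqueness argument, which the paper's proof leaves implicit, but this is the same immediate consequence of gradedness and injectivity.
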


\begin{proof}
	Define a linear map $\tilde{\psi}:\aaa\to \p$ such that $\tilde{\psi}(\aaa_s)\subset \p_s$ for $s \in \Z$  by 
	$$
	\tilde{\psi} (X) = (p|_{\p_s})^{-1} (\psi(X)),\quad X\in \aaa_s.
	$$
	 
	We see that  $p\circ \tilde{\psi} = \psi$. Let us check that $\tilde{\psi}$  is a homomorphism. Indeed, let us take $X\in \aaa_s$ and $Y\in \aaa_t$. Then
	\begin{align*}
p ([\tilde{\psi}(X),\tilde{\psi}(Y) ]) =  [p \circ \tilde{\psi}(X),p \circ\tilde{\psi}(Y) ]= [\psi(X),\psi(Y)] =\\
	\psi([X,Y]) = p\circ	\tilde{\psi}([X,Y]).
	\end{align*}
	Since by definition of $\tilde{\psi}$ both $\tilde{\psi}([X,Y])$ and $[\tilde{\psi}(X),\tilde{\psi}(Y)]$ are in $\p_{s+t}$ and $p|_{\p_s}$ is bijective, we get the equality $[\tilde{\psi}(X),\tilde{\psi}(Y) ]= \tilde{\psi}([X,Y])$.
\end{proof}

\begin{proposition}\label{p:lift_of_homomorphism}
	Let $\varphi:\g\to \g'$ be a homomorphism of Lie superalgebras. Then there exists  unique homomorphism $\tilde \varphi$ of  $\Z$-coverings (or $\Z^{\geq 0}$-coverings) $p$ and $p'$  such that the following diagram is commutative
	$$
	\begin{tikzcd}
	\p \arrow[r, "\tilde \varphi"] \arrow["p", d]
	& \p' \arrow[d, "p'" ] \\
	\g\arrow[r,  "\varphi" ]
	& \g'
	\end{tikzcd}
	$$
In particular, any two coverings of a given Lie superalgebra $\g$ are isomorphic.
\end{proposition}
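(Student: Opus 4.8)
The plan is to deduce everything from the universal property already established in Proposition \ref{p:univ_1}, so that almost no new computation is needed. First I would form the composite $\psi := \varphi \circ p : \p \to \g'$, which is a homomorphism of Lie superalgebras because both $p$ and $\varphi$ are. Regarding the source $\p$ as a $\Z$-graded Lie superalgebra (respectively one with support $\Zo$), and viewing $p' : \p' \to \g'$ as the covering into which we wish to factor, I would apply Proposition \ref{p:univ_1} with $\aaa = \p$ and this choice of $\psi$. This immediately produces a unique $\Z$-graded homomorphism $\tilde\varphi : \p \to \p'$ satisfying $p' \circ \tilde\varphi = \psi = \varphi \circ p$, which is exactly the commutativity of the square; the uniqueness of $\tilde\varphi$ is inherited verbatim from the uniqueness clause of Proposition \ref{p:univ_1}.

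For the final assertion about uniqueness of coverings, I would specialize to $\g' = \g$ and $\varphi = \id_\g$, applied to two coverings $p : \p \to \g$ and $p' : \p' \to \g$ of the same $\g$. Running the construction in both directions yields $\Z$-graded homomorphisms $a : \p \to \p'$ and $b : \p' \to \p$ with $p' \circ a = p$ and $p \circ b = p'$. The key step is then to observe that the composite $b \circ a : \p \to \p$ satisfies $p \circ (b \circ a) = p' \circ a = p$, so that both $b \circ a$ and $\id_\p$ are lifts of the homomorphism $p : \p \to \g$ through the covering $p$. By the uniqueness clause of Proposition \ref{p:univ_1} (applied with $\aaa = \p$ and $\psi = p$) these two lifts must coincide, giving $b \circ a = \id_\p$; the symmetric argument gives $a \circ b = \id_{\p'}$. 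Hence $a$ is an isomorphism of $\Z$-graded Lie superalgebras and $\p \simeq \p'$.

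I do not anticipate a genuine obstacle, since the statement is essentially a formal consequence of the universal property; the only point requiring care is the bookkeeping of gradings. One must check that the lift $\tilde\varphi$ produced is indeed $\Z$-graded (this is guaranteed, as Proposition \ref{p:univ_1} delivers a $\Z$-graded homomorphism) and that the composites $a, b$ respect the grading, so that the uniqueness clause legitimately applies to the identity morphisms $\id_\p$ and $\id_{\p'}$. The argument is identical in the $\Z$-graded and $\Zo$-graded settings, the only difference being the range of degrees over which the components $p|_{\p_a}$ are required to be bijections.
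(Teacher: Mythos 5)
Your proposal is correct and follows exactly the paper's own route: the paper proves this proposition in one line by applying Proposition \ref{p:univ_1} to $\psi=\varphi\circ p$, and your elaboration of the isomorphism claim (composing the two lifts and invoking uniqueness against the identity) is the standard argument the paper leaves implicit.
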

\begin{proof} 
	It follows immediately from Proposition~\ref{p:univ_1}, just take $\psi = \varphi\circ p$. 
\end{proof}

\subsection{Loop superalgebras}\label{sebsec matrix form}
 Let $\g$ be a Lie superalgebra. We can easily see that a $\mathbb Z$-covering $\p'$ and a $\mathbb Z^{\geq 0}$-covering $\p$ of $\g$ is a (sub)algebra of a loop superalgebra in the sense \cite[Definition 3.1.1]{Allison}, see also \cite{Eld}. Indeed, we have
\begin{align*}
\mathfrak p' \simeq \bigoplus_{i\in \Z} \g_{\bar i}\otimes t^i,\quad \mathfrak p'_i \simeq \g_{\bar i}\otimes t^i;\quad \p \simeq \bigoplus_{i\geq 0} \g_{\bar i}\otimes t^i,\quad \p_i \simeq \g_{\bar i}\otimes t^i.
\end{align*}
Here $t$ is a formal even variable.  The covering projection in both cases is given by $p(X\otimes t^i) = X$, where $X\in \g_{\bar i}$. 

Now consider the case $\g = \mathfrak{gl}_{m|n}(\mathbb K)$. The Lie superalgebra $\p$ possesses a simple matrix realization. This implies by Ado's Theorem that we have such a matrix realization for any finite dimensional Lie superalgebra $\mathfrak{h}\subset \mathfrak{gl}_{m|n}(\mathbb K)$.  Recall that the general linear Lie superalgebra $\mathfrak{gl}_{m|n}(\mathbb K)$ contains all matrices over $\K$ in the following form
$$
\left(
\begin{array}{cc}
A& B \\
C& D\\
\end{array}\right),
$$
where $A$ is $m\times m$-matrix and  $D$ is $n\times n$-matrix over $\mathbb K$.  Then the Lie superalgebra $\mathfrak p$ contains all matrices in the following form
$$
\left(
\begin{array}{ccccc}
A_0 & 0 & 0& 0& \cdots\\
C_1& D_0& 0 & 0&\cdots\\
A_2&B_1& A_0 & 0& \cdots\\
C_3 &D_2& C_1 & D_0& \cdots\\
\cdots& \cdots&  \cdots &  \cdots& \cdots\\
\end{array}\right).
$$
Here $A_i$ are $m\times m$-matrices, $D_i$ are $n\times n$-matrices, $B_i$ are $m\times n$-matrices, $C_i$ are $n\times m$-matrices over $\mathbb K$.
We have
$$
\p = \bigoplus_{n\geq 0} \p_n,\quad  \p=  \p_{\bar 0} \oplus  \pp_{\bar 1},
$$
where $\p_{\bar 0}$ contains all matrices with $B_i=0$ and $C_j=0$ for any $i,j$, while $\p_{\bar 1}$ contains all matrices with $A_i=0$ and $D_j=0$ for any $i,j$. The $\mathbb Z$-grading of $\mathfrak p$ has natural meaning: if $X\in A_i$, then the $\Z$-grading of $X$ is $i$. The same for other blokes $B_i$, $C_j$ and $D_k$. The covering map $p$ is given by $(A_i,D_i)\mapsto  (A,D)$ and $(B_j,C_j)\mapsto  (B,C)$.

We can explicitly construct the corresponding Lie supergroup.  
Let $\mcG$ be a Lie subsupergroup in the general linear Lie supergroup $\mathrm {GL}_{m|n}(\mathbb K)$.  Let $\mathrm {GL}_{m|n}(\mathbb K)$ has the following coordinate superdomain $\mathcal U$
$$
\left(
\begin{array}{cc}
X & \Xi\\
\mathrm H& Y\\
\end{array}\right),
$$
where $X\in \mathrm {GL}_{m}(\mathbb K)$, $Y\in \mathrm {GL}_{n}(\mathbb K)$ are matrices of even coordinates of $\mathcal U$ and $\Xi$, $\mathrm H$ are matrices of odd coordinates. Now the Lie supergroup of $\p$ has the following coordinate superdomain
$$
\left(
\begin{array}{ccccc}
X_0 & 0 & 0& 0& \cdots\\
\mathrm H_1& Y_0& 0 & 0&\cdots\\
X_2&\Xi_1& X_1 & 0& \cdots\\
\Xi_3&Y_2& \mathrm H_1 & Y_0& \cdots\\
\cdots& \cdots&  \cdots &  \cdots& \cdots\\
\end{array}\right).
$$
$X_i\in \mathrm {GL}_{m}(\mathbb K)$, $Y_j\in \mathrm {GL}_{n}(\mathbb K)$ are matrices of even coordinates  and $\Xi_s$, $\mathrm H_t$ are matrices of odd coordinates. The Lie supergroup multiplication is given by usual matrix multiplication.

\section{A $\mathbb Z^{\geq 0}$-covering for a  Lie supergroup}\label{sec Z-covering, Lie superalgebra and Lie supergroup}

\subsection{A $\mathbb Z^{\geq 0}$-covering for a  Lie supergroup}\label{sec Z-covering for a  Lie supergroup} 
Let $\mcG$ be a Lie supergroup. In particular $\mcG$ is a supermanifold and  we can find its $\mathbb Z^{\geq 0}$-covering $\mcP$. Note that now $\mcP$ does not have any graded Lie group structure. We can define a structure of a Lie supergroup on $\mcP$ using the supergroup morphisms $\mu$, $\kappa$, $\varepsilon$ in $\mcG$.  First of all a $\Z^{\geq 0}$-covering of $\mcG\times \mcG$ is isomorphic to $\mcP\times \mcP$, which is defined as an inverse limit of $\mcP_n\times \mcP_n$. Indeed, $\mcP\times \mcP$ satisfies the universal properties of a $\Z^{\geq 0}$-covering of $\mcG\times \mcG$.

Now we use Theorem \ref{theor morphism psi can be covered by Psi}.  Denote by $\tilde\mu:\mcP\times \mcP\to \mcP$, $\tilde\kappa: \mcP\to \mcP$, $\tilde\varepsilon:\K\to \mcP$ the lifts of morphisms $\mu$, $\kappa$, $\varepsilon$, respectively. (Clearly a $\Z^{\geq 0}$-covering of $\K$ is just $\K$.) An standard argument shows that $\tilde\mu$, $\tilde\kappa$, $\tilde\varepsilon$ satisfy the supergroup axioms. For example let us prove associativity axiom. The following two morphisms 
$$
\tilde\mu\circ (\tilde\mu\times \id)\quad \text{and} \quad \tilde\mu\circ (\id \times \tilde\mu)
$$
are equal, since they both cover the morphism $\mu\circ (\mu\times \id)= \mu\circ (\id \times \mu)$. Summing up, we showed that $\mcP$ is a graded Lie group. Since $\tilde\mu$ is a cover of $\mu$, by Theorem \ref{theor morphism psi can be covered by Psi}  we have $\tilde\mu \circ (\pp\times \pp) = \pp\circ \mu$. Therefore, $\pp$ is a homomorphism of Lie supergroups.

Let $\mathcal A$ be a graded group of degree $n$ and $\psi:\mathcal A\to \mcG$ be a homomorphism of Lie supergroups.  By Theorem \ref{theor univ_property supermanifold} there exists $\Psi$ such that $\psi = \Psi\circ \pp$.  Let us prove that $\Psi$ is a homomorphism of graded Lie groups. Consider two morphisms $$
\tilde\mu \circ  (\Psi\times \Psi): \mathcal A\times\mathcal A \to \mcP \quad  \text{and} \quad  \Psi \circ \mu_{\mathcal A} : \mathcal A\times\mathcal A \to \mcP,
$$ 
where $\mu_{\mathcal A}$ is the multiplication in $\mathcal A$. They are lifts of the morphisms $\mu \circ  (\psi\times \psi)$ and $\psi \circ \mu_{\mathcal A} $, respectively. Since $\psi$ is a  homomorphism, in other words $\mu \circ  (\psi\times \psi)=\psi \circ \mu_{\mathcal A} $ we obtain the result by Theorem \ref{theor univ_property supermanifold}. Now let $\g$, $\p$ and $\aaa$ be Lie superalgebras of $\mcG$, $\mcP$ and $\mathcal A$. We see that $\p$ satisfies Proposition \ref{p:univ_1}, therefore, $\p \simeq \bigoplus_{i\geq 0} \g_{\bar i}\otimes t^i$. 

\begin{theorem}
	Let $\mcG$ be a Lie supergroup and $\mcP$ be a $\Zo$-graded covering of $\mcG$. Then we have
	$$
	\mcP \simeq (\mcG_0,\mathrm{Hom}_{\mathcal U(\p_{0})} (\mathcal U(\p), \mathcal F_{\mcG_0})).
	$$
	In other words $\mcP$ is determined by the Harish-Chandra pair $(\mcG_0,\p)$. 
\end{theorem}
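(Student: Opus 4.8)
The plan is to reduce to the finite-degree case, where the Harish--Chandra equivalence for graded Lie groups (\cite[Theorem 5.6]{Jubin}) applies verbatim, and then to pass to the limit along the tower $\mcP = \varprojlim \mcP_n$, using the $\Z$-grading to control the limit. First I would identify the Harish--Chandra pair of each truncation $\mcP_n$. Since $\mcP$ is a graded Lie group with $\Lie(\mcP)=\p\simeq \bigoplus_{i\geq 0}\g_{\bar i}\otimes t^i$, and $\mcP_0=\mcM_0=\mcG_0$ by Proposition \ref{prop P_0= P_1=}, the projection $\pr_n:\mcP\to \mcP_n$ induces on Lie algebras the surjection $\p\to \p_{\leq n}:=\p/\bigl(\bigoplus_{i>n}\p_i\bigr)$ killing the graded ideal of weights $>n$. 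Thus $\Lie(\mcP_n)=\p_{\leq n}$, a degree-$n$ $\Z$-graded Lie superalgebra whose bracket is the bracket of $\p$ with all components of weight $>n$ set to zero, and with $(\p_{\leq n})_0=\p_0\simeq \g_{\bar 0}=\Lie(\mcG_0)$. Applying \cite[Theorem 5.6]{Jubin} to the degree-$n$ Harish--Chandra pair $(\mcG_0,\p_{\leq n})$ together with the structure-sheaf formula (\ref{eq structure sheaf of G}) yields an isomorphism of graded Lie groups
\begin{equation*}
\mcO_{\mcP_n} \simeq \mathrm{Hom}_{\mathcal U(\p_0)}\bigl(\mathcal U(\p_{\leq n}),\mathcal F_{\mcG_0}\bigr),\qquad n\geq 0,
\end{equation*}
respecting the $\Z$-grading.

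Next I would pass to the limit. Recall that $\mcO_{\mcP}=\varinjlim \mcO_{\mcP_n}$ is the union/direct limit of the structure sheaves (Section \ref{sec inverse limit}), the transition maps being the pullbacks $(\pr^{n+1}_n)^*$, which under the isomorphisms above are precomposition with the algebra homomorphism $\mathcal U(\p_{\leq n+1})\to \mathcal U(\p_{\leq n})$ induced by the truncation $\p_{\leq n+1}\to \p_{\leq n}$. The claim then amounts to the identity
\begin{equation*}
\varinjlim_n \mathrm{Hom}_{\mathcal U(\p_0)}\bigl(\mathcal U(\p_{\leq n}),\mathcal F_{\mcG_0}\bigr) \simeq \mathrm{Hom}_{\mathcal U(\p_0)}\bigl(\mathcal U(\p),\mathcal F_{\mcG_0}\bigr),
\end{equation*}
where on the right I read $\mathrm{Hom}$ as the \emph{graded} Hom (maps supported in finitely many weights), which is what makes the target a genuine, non-completed, $\Z$-graded structure sheaf. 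I would establish this weight by weight. For fixed $q$, the weight-$q$ component $\mathcal U(\p)_q$ is spanned by products of generators of total weight $q$, hence involves only $\p_i$ with $i\leq q$; therefore $\mathcal U(\p)_q=\mathcal U(\p_{\leq n})_q$ for all $n\geq q$. Correspondingly the weight-$q$ component of $\mathrm{Hom}_{\mathcal U(\p_0)}(\mathcal U(\p_{\leq n}),\mathcal F_{\mcG_0})$, which by the graded isomorphism above equals $(\mcO_{\mcP_n})_q$, stabilizes and agrees with $(\mcO_{\mcP})_q$ once $n\geq q$. Summing over $q\in\Zo$ gives the desired isomorphism of $\Z$-graded sheaves.

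Finally I would verify that the isomorphism is one of graded Lie groups, i.e. that it intertwines the structure morphisms $\tilde\mu,\tilde\kappa,\tilde\varepsilon$ of $\mcP$ built in Section \ref{sec Z-covering for a  Lie supergroup} with the Hopf-algebraic morphisms (\ref{eq umnozh in supergroup}) attached to $(\mcG_0,\p)$. This is automatic: each finite-stage isomorphism is a morphism of graded Lie groups by \cite[Theorem 5.6]{Jubin}, and a limit of compatible group isomorphisms is again a group isomorphism; the covering projection $\pp$ corresponds under these identifications to the evaluation $p:\p\to\g$ of Definition \ref{def Z^0 covering, algebras}, which shows that $\mcP$ is determined by the pair $(\mcG_0,\p)$. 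The main obstacle is precisely the limit step: one must note that the transition maps in $\varinjlim \mcO_{\mcP_n}$ are dual to the Lie-algebra truncations (projections) $\p_{\leq n+1}\to\p_{\leq n}$ and \emph{not} to the non-homomorphic inclusions $\p_{\leq n}\hookrightarrow\p_{\leq n+1}$, and that the correct target is the graded (finitely supported) $\mathrm{Hom}$ rather than its completion. The $\Z$-grading is exactly what neutralizes both difficulties, since every fixed weight is already realized at a finite stage.
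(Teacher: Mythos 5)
Your proof is correct in outline but follows a genuinely different route from the paper. The paper argues top--down: it takes the candidate $(\mcG_0,\mathrm{Hom}_{\mathcal U(\p_{0})}(\mathcal U(\p),\mathcal F_{\mcG_0}))$ with the projection $\pp'^*(f)(X)=f(p(X))$, verifies that it satisfies the universal lifting property (\ref{eq commut diagramm univ prop}) for morphisms $\psi:\mcA\to\mcG$ by lifting the induced map $\phi:\mathcal U(\aaa)\to\mathcal U(\g)$ through $p$ via Proposition \ref{p:univ_1}, and then concludes by Theorem \ref{theor def of cov using univ prop} together with the uniqueness of coverings from Theorem \ref{theor morphism psi can be covered by Psi}. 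You instead work bottom--up: you identify each finite truncation $\mcP_n$ through the degree-$n$ Harish--Chandra equivalence of \cite[Theorem 5.6]{Jubin} applied to $(\mcG_0,\p_{\leq n})$, and then recover $\mcO_{\mcP}$ as the weight-wise stabilizing direct limit of the graded Homs. Your limit argument is sound: since $\supp(\p)\subset\Zo$, the ideal generated by $\p_{>n}$ has no component in weight $q\leq n$, so $\mathcal U(\p)_q\simeq\mathcal U(\p_{\leq n})_q$ for $n\geq q$ and the graded Hom stabilizes degree by degree. What the paper's route buys is that it never needs to exhibit $\Lie(\mcP_n)$ or the finite-stage isomorphisms explicitly; what your route buys is a concrete identification of every $\mcP_n$ and an explicit reason why the non-completed graded Hom (rather than its completion) is the right object.

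Two points in your argument deserve a word of justification rather than assertion. First, you use that each $\mcP_n$ is a graded Lie group of degree $n$, that $\pr^{n+1}_n$ is a group homomorphism, and that $\Lie(\mcP_n)=\p_{\leq n}$; this does follow from the paper's construction, because the lifts produced by Theorem \ref{theor morphism psi can be covered by Psi} are by construction inverse limits of morphisms $\mcP_n\to\tilde\mcP_n$, so $\tilde\mu$ descends to each level and the group axioms pass to the quotients, but you should say so, since the theorem's hypotheses only hand you $\mcP$ as a covering. Second, a Harish--Chandra pair of degree $n$ includes the action $\alpha$ of $\mcG_0$ on $\p_{\leq n}$, and to invoke \cite[Theorem 5.6]{Jubin} with the loop-algebra model you must check that the adjoint action of $\mcG_0$ on $\p_{\leq n}\simeq\bigoplus_{i\leq n}\g_{\bar i}\otimes t^i$ is $\mathrm{Ad}\otimes 1$; this follows because $\dd\pp=p$ intertwines the adjoint actions and is a weight-wise bijection onto $\g_{\bar i}$. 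With these two remarks supplied, your argument is complete and compatible with the paper's statement that $\mcP$ is determined by the pair $(\mcG_0,\p)$.
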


\begin{proof}
	The idea is to show that the graded Lie group  $ (\mcG_0,\mathrm{Hom}_{\mathcal U(\p_{0})} (\mathcal U(\p), \mathcal F_{\mcG_0}))$ together with the Lie supergroup morphism $ \pp': (\mcG_0,\mathrm{Hom}_{\mathcal U(\p_{0})} (\mathcal U(\p), \mathcal F_{\mcG_0})) \to \mcG$ given by
	\begin{align*}
	\pp'^*: \mathrm{Hom}_{\mathcal U(\g_{\bar 0})} (\mathcal U(\mathfrak \g), \mathcal F_{\mcG_0}) \simeq \mcO_{\mcG} & \longrightarrow\mathrm{Hom}_{\mathcal U(\p_{0})} (\mathcal U(\mathfrak \p), \mathcal F_{\mcG_0}) ;\\
	\pp'^* (f)(X)(g)& = f(p (X)) (g),\quad X\in \mathcal U(\mathfrak \p), \quad g\in \mcG_0,
	\end{align*}
	where $p$ is as in Proposition \ref{p:univ_1},
	 satisfies the universal property and to use Theorem \ref{theor def of cov using univ prop}. Let us take a non-negatively graded Lie group $\mcA$ of degree $n$. By \cite{Kotov} this graded Lie group is determined by its graded Harish-Chandra pair. In particular the structure sheaf $\mcO_{\mcA}$ of $\mcA$ is isomorphic to 
	$$
	\mathrm{Hom}_{\mathcal U(\aaa_{0})} (\mathcal U(\mathfrak \aaa), \mathcal F_{\mcA_0}),
	$$
	where $\mcF_{\mcA_0}$ is the structure sheaf of the Lie group $\mcA_0$ and $\aaa=\Lie \mcA$. 
	
	Further, similarly to \cite{Kotov,ViLieSupergroup} we can show that any Lie supergroup isomorphism $\psi: \mcA\to \mcG$ is determined by the following formula
	\begin{align*}
	\psi^*: \mathrm{Hom}_{\mathcal U(\g_{\bar 0})} (\mathcal U(\mathfrak \g), \mathcal F_{\mcG_0}) \simeq \mcO_{\mcG} & \longrightarrow\mathrm{Hom}_{\mathcal U(\aaa_{0})} (\mathcal U(\mathfrak \aaa), \mathcal F_{\mcA_0}) ;\\
	\psi^* (f)(X)(a)& = f(\phi (X)) (\psi_0(a)),\quad X\in \mathcal U(\mathfrak \aaa), \quad a\in \mcA_0,
	\end{align*}
	where $\phi: \mathcal U(\mathfrak \aaa)\to \mathcal U(\mathfrak \g) $ is the morphism of universal enveloping algebras corresponding to the Lie supergroup morphism $\psi$. Let us define the lift $\Psi: \mcA \to (\mcG_0,\mathrm{Hom}_{\mathcal U(\p_{0})} (\mathcal U(\p), \mathcal F_{\mcG_0}))$ of $\psi$. By Proposition \ref{p:univ_1} there exists a lift $\tilde \phi: \aaa\to \p$ of $\phi$. We put
 \begin{align*}
 \Psi^*: \mathrm{Hom}_{\mathcal U(\p_{0})} (\mathcal U(\p), \mathcal F_{\mcG_0}) & \longrightarrow\mathrm{Hom}_{\mathcal U(\aaa_{0})} (\mathcal U(\mathfrak \aaa), \mathcal F_{\mcA_0}) ;\\
 \Psi^* (f)(X)(a)& = f(\tilde\phi (X))(\psi_0(a)),\quad X\in \mathcal U(\mathfrak \aaa) , \quad a\in \mcA_0.
 \end{align*}
 A standard argument, see for example \cite{ViLieSupergroup}, shows that the formula above defines a homomorphism of Lie supergroups.  Further we have $\psi^* = \Psi^* \circ \pp'^*$, since $\phi= p \circ \tilde \phi$. The prove is complete.
\end{proof}

\appendix

\section{First obstruction class $\omega_2$}\label{sec First obstruction class omega_2}

Let us describe the first obstruction class to splitting a supermanifold using results \cite{Ber,Green,Oni,Roth}. We follow the exposition of \cite{Oni}. First of all consider a split supermanifold  $\mcM=(\mcM_0,\mcO)$, where $\mcO\simeq\bigwedge \mathcal E$ and $\mathcal E$ is a locally free sheaf on $\mcM_0$. Denote by $\mathcal{D}er\mathcal O$ the sheaf of vector fields on $\mcM$ and by $\mathcal{D}er\mathcal F$ the sheaf of vector fields on the underlying space $\mcM_0$.  The sheaf $\mathcal{D}er\mathcal O=\bigoplus\limits_{p\geq -1} \mathcal{D}er_p\mathcal O$ is naturally $\Z$-graded since $\bigwedge \mathcal E$ is $\Z$-graded.  We have the following exact sequence
\begin{equation}\label{eq exact seq}
0\to \bigwedge^3\mathcal E\otimes\mathcal E^* \xrightarrow{\imath} \mathcal{D}er_2\mathcal O \xrightarrow{\jmath} \bigwedge^2\mathcal E\otimes\mathcal{D}er\mathcal F \to 0,
\end{equation}
see \cite[Formula (5)]{Oni}. The map $\imath$ extends any linear map  $\mathcal E\to  \bigwedge^3\mathcal E$ to a vector field in $\mathcal{D}er_2\mathcal O$  via Leibniz rule. The map $\jmath$ is the restriction  $v \mapsto v|_{\mcF}$ of a vector field $v$ to the structure sheaf $\mcF$ of $\mcM_0$.

Let $\mathcal M'= (\mcM_0,\mcO_{\mcM'})$ be a (non-split) supermanifold with the base space $\mcM_0$. 
Thanks to Green \cite{Green}   all supermanifolds $\mcM'$ corresponding to a fixed locally free sheaf $\mcE$ over $\mcM_0$, that is $\gr (\mcM')\simeq (\mcM_0, \mcO)$,  can be classified as orbits of certain action which we are going to recall. Denote by $\mathcal{A}ut \mathcal O$ the sheaf of automorphisms of $\mcO$. Note that by definition any automorphism is even and maps a stalk $\mcO_x$, $x\in \mcM_0$, to itself. 
Consider the  subsheaf $\mathcal{A}ut_{(2)} \mathcal O$  of $\mathcal{A}ut \mathcal O$ defined as
$$
\mathcal{A}ut_{(2)} \mathcal O = \{ a\in \mathcal{A}ut \mathcal O\,\,|\,\, a(u)-u\in \mathcal J^2\,\, \text{for any}\,\, u\in \mathcal O \},
$$
see also \cite[Formula (17)]{Oni}. Recall that $\mathcal J$ is the sheaf of ideals generated by odd elements in $\mathcal O$.  Denote by $\mathrm{Aut}\mathcal E$ the group of global automorthisms of $ \mcE$.  There is  a natural action of $\mathrm{Aut}\mathcal E$ on the sheaf $\mathcal{A}ut_{(2)} \mathcal O$ given by $a\mapsto \phi\circ a\circ \phi^{-1} $ for $\phi \in \mathrm{Aut}\mathcal E$, see \cite[Section 1.4]{Oni}. This action induces an action of $\mathrm{Aut}\mathcal E$ on the set $H^1(\mcM_0, \mathcal{A}ut_{(2)} \mathcal O)$. 
By Green \cite{Green} 
the set of orbits $H^1(\mcM_0, \mathcal{A}ut_{(2)} \mathcal O)/ \mathrm{Aut}\mathcal E$ are in one-to-one correspondence with  isomorphism classes of supermanifolds $\mcM'$ such that $\gr (\mcM')\simeq (\mcM_0, \bigwedge \mathcal E)$. More precisely to any supermanifold $\mcM'$ such that $\gr (\mcM')\simeq (\mcM_0, \bigwedge \mathcal E)$ we can assign  a class $\gamma\in H^1(\mcM_0, \mathcal{A}ut_{(2)} \mathcal O)$. If $\gamma_i$ is the class corresponding to a supermanifold $\mcM'_i$ such that $\gr (\mcM'_i)\simeq \mcM$, where $i=1,2$, then $\mcM'_1\simeq \mcM'_2 $ if and only if $\gamma_1$ and $\gamma_2$ are in the same orbit of $\mathrm{Aut}\mathcal E$.

In \cite{Roth} the following map of sheaves was defined
\begin{equation}\label{eq Roth map}
\lambda_2:\mathcal{A}ut_{(2)} \mathcal O \to \mathcal{D}er_2\mathcal O,
\end{equation}
where $\lambda_2(a)$ is the $2$-component of $\log a\in \mathcal{D}er\mathcal O$,
see also \cite[Formula (19)]{Oni}. Combining the map (\ref{eq Roth map}) and the map $\jmath:\mathcal{D}er_2\mathcal O \to \bigwedge^2\mathcal E\otimes\mathcal{D}er\mathcal F$ from (\ref{eq exact seq}) we get the following map of cohomology sets
$$
H^1(\mcM_0, \mathcal{A}ut_{(2)} \mathcal O) \to H^1(\mcM_0, \bigwedge^2\mathcal E\otimes\mathcal{D}er\mathcal F)
$$
and the corresponding map of $\mathrm{Aut}\mathcal E$-orbits 
$$
H^1(\mcM_0, \mathcal{A}ut_{(2)} \mathcal O)/\mathrm{Aut}\mathcal E \to H^1(\mcM_0, \bigwedge^2\mathcal E\otimes\mathcal{D}er\mathcal F)/\mathrm{Aut}\mathcal E.
$$
If a class $\gamma\in H^1(\mcM_0, \mathcal{A}ut_{(2)} \mathcal O)$ corresponds to a non-split supermanifold $\mcM'$, the image of $\gamma$ in $H^1(\mcM_0, \bigwedge^2\mathcal E\otimes\mathcal{D}er\mathcal F)$ is called {\it the first obstruction class} to  splitting of $\mcM'$ and following \cite[Section 2.1]{Witten Atiyah classes} we denote this class by $\omega_2$. 

\begin{example}
	Consider the case when $\mcM'$ has odd dimension $2$ in details. Since $\bigwedge^3 \mathcal E=\{0\}$, from (\ref{eq exact seq}) it follows that 
	$$
	\mathcal{D}er_2\mathcal O\simeq \bigwedge^2\mathcal E\otimes\mathcal{D}er\mathcal F.  
	$$
	In this case the map (\ref{eq Roth map}) is an isomorphism. Therefore we have the following set bijection
	$$
	H^1(\mcM_0, \mathcal{A}ut_{(2)} \mathcal O) \simeq H^1(\mcM_0, \bigwedge^2\mathcal E\otimes\mathcal{D}er\mathcal F)
	$$
	and the corresponding bijection of the sets of $\mathrm{Aut}\mathcal E$-orbits. 
	Now Green's result \cite{Green} implies that $\omega_2$ is the only obstruction for a supermanifold to be split in this case. In other words a supermanifold $\mcM'$ of odd dimension $2$ such that $\gr(\mcM')\simeq (\mcM_0, \bigwedge \mathcal E)$ is split if and only if $\omega_2=0$. Note that the notion of a split and a projectable, see \cite{Witten not projected} for details, supermanifold coincide in this case. 
\end{example}

Let us compute a representative cocycle of $\omega_2$. First of all let us describe Green cocycle $(g_{ij})$ of $\mcM'$. We follow \cite[Section 4]{OniCOT}. Consider two charts $\mathcal U_i$ and $\mathcal U_j$  on $\mcM'$  with local coordinates  $(x_a, \xi_b)$ and $(y_a, \eta_b)$. 
Let in $\mathcal U_i\cap \mathcal U_j$ we have the following transition functions 
\begin{equation}\label{eq starting smf m=2 3}
\begin{split}
&y_a= F_a + \frac12 G_a^{ij} \xi_{i}\xi_{j}+\cdots,\,\,\, a=1,\ldots, p;\\ 
&\eta_b = H^{j}_b\xi_j+\cdots, \,\,\, b=1,\ldots, q,
\end{split}
\end{equation}
where $F_a=F_a(x)$, $G_a^{ij}=G_a^{ij}(x)$, $H^{j}_i=H^{j}_i(x)$ are functions depending only on even coordinates $\{x_a\}$ and we denote by dots terms in (\ref{eq starting smf m=2 3}) from $\mcJ^3$. 

Denote by $(x'_a, \xi'_b)$ and $(y'_a, \eta'_b)$ the images of coordinates $(x_a, \xi_b)$ and $(y_a, \eta_b)$ in $\gr\mcO$. Then the transition functions in $\gr\mathcal U_i\cap \gr\mathcal U_j$ are given by the following formulas
\begin{equation}\label{eq starting smf gr}
\begin{split}
&y'_a= F_a(x') \,\,\, a=1,\ldots, p;\\ 
&\eta'_b = H^{j}_b(x')\xi'_j, \,\,\, b=1,\ldots, q,
\end{split}
\end{equation}
Denote the automorphism (\ref{eq starting smf m=2 3}) by
 $\psi_{ij}$ and the automorphism (\ref{eq starting smf gr}) 
  by $\phi_{ij}$. In \cite[Section 4]{OniCOT} the following formula was obtained
  $$
  \psi_{ij} = g_{ij} \circ \phi_{ij}\quad \text{or}\quad g_{ij} = \psi_{ij}\circ   \phi_{ij}^{-1},
 $$
where $g_{ij}$ is the Green cocycle written in coordinates $(x_a, \xi_b)$. To obtain a representative cocycle $((\omega_2)_{ij})$ of the cohomology class $\omega_2$ we apply $\lambda_2$ to $g_{ij}$. Explicitly we have
 \begin{equation}\label{eq omega_2}
(\omega_2)_{ij} = \jmath\circ\lambda_2(g_{ij}),
\end{equation}
where $\jmath$ is as in (\ref{eq exact seq}). 

	\section{A Donagi--Witten construction}\label{sec Donagi and Witten construction}

In \cite[Section 2.1]{Witten Atiyah classes} Donagi and Witten gave a description of the first obstruction class $\omega=\omega_2$ to splitting a supermanifold via differential operators. In this section we remind the Donagi--Witten construction. (For a definition of $\omega_2$ see Appendix \ref{sec First obstruction class omega_2}.) Further we give an interpretation of the Donagi--Witten construction using the language of  double vector bundles and graded manifolds of degree $2$. At the end using differential forms instead of differential operators we dualize this construction, which allows us to find its higher analogue.

Let $\mcM$ be a (non-split) supermanifold. 
In \cite[Section 2.1]{Witten Atiyah classes} the first obstruction class $\omega_2$ to splitting a supermanifold $\mcM=(\mcM_0, \mathcal O)$ was interpreted via a certain sheaf  of differential operators on $\mcM$. Let $\mcE=\mcO/\mcJ$ be the corresponding to $\mcM$ locally free sheaf. In Section \ref{sec First obstruction class omega_2} we defined $\omega_2$ as an element in 
$$
\omega_2\in H^1(\mcM_0, \bigwedge^2\mathcal E\otimes\mathcal{D}er\mathcal F).
$$ 
Any such class $\omega_2$ can be interpreted as the Atiyah class (extension class)  of the follo\-wing short exact sequence of locally free sheaves
\begin{equation}\label{eq DW exact sequence}
0\to \mathcal{D}er\mathcal F\to \mathcal D_{\omega_2} \to \bigwedge^2\mathcal E^* \to  0. 
\end{equation}
compare with  \cite[Formula 2.1]{Witten Atiyah classes}. In other words, the locally free sheaf $\mathcal D_{\omega_2}$ is defined by the class $\omega_2$. Further, in \cite[Section 2.1]{Witten Atiyah classes} it was noticed that the locally free sheaf $\mathcal D_{\omega_2}$ can be realized via a sheaf of differential operators, which we are going to recall. The construction of the exact sequence (\ref{eq DW exact sequence}) with the obstruction class $\omega_2$ using differential operators we call the {\it Donagi-Witten construction}. This construction was suggested in \cite[Section 2.1]{Witten Atiyah classes}.

To get  $\mathcal D_{\omega_2}$ we consider the locally free sheaf of second order differential operators on $\mcM$ with values in $\mcO/\mcJ$ whose symbol are totally odd. We remind  this construction using charts and local coordinates.  Consider two charts $\mathcal U_1$ and $\mathcal U_2$  on $\mcM$ with non-empty intersection and with local coordinates  $(x_a, \xi_b)$ and $(y_a, \eta_b)$. 
Let in $\mathcal U_1\cap \mathcal U_2$ we have the following transition functions 
\begin{equation}\label{eq starting smf m=2}
\begin{split}
&y_a= F_a + \frac12 G_a^{ij} \xi_{i}\xi_{j}+\cdots\,\,\, a=1,\ldots, p;\\ 
&\eta_b = H^{j}_b\xi_j+\cdots, \,\,\, b=1,\ldots, q,
\end{split}
\end{equation}
where $F_a=F_a(x)$, $G_a^{ij}=G_a^{ij}(x)$, $H^{j}_i=H^{j}_i(x)$ are 
functions depending only on even coordinates $\{x_a\}$ and we denote by dots terms in (\ref{eq starting smf m=2}) from $\mcJ^3$.  (Here we use Einstein summation notation. For example, $H^{j}_b\xi_j$ means the sum $\sum\limits_{j=1}^q H^{j}_b\xi_j$.)  If $R: \mcO \to \mcO$ is a differential operator we denote by $\rd{R}: \mcO \to \mcF$ the composition of $R$ with the map $\red:\mcO \to \mcO/\mcJ  =\mcF$.

Following Donagi and Witten, see  \cite[Section 2.1]{Witten Atiyah classes}, we define the locally free sheaf $\mathcal D_{\omega_2}$ on $\mcM_0$ as the sheaf, which is locally generated over $\mathcal F$  by 
$$
\langle \rd{\partial_{x_a}},  \rd{\partial_{\xi_i}\circ {\partial_{\xi_j}}} \rangle.
$$ 
In \cite[Theorem 2.5]{Witten Atiyah classes} it was shown that this definition does not depend on local coordinates. Note that the operators 
$\partial_{\xi_i}$ anticommute, i.e. $\partial_{\xi_i}\circ {\partial_{\xi_j}} = - \partial_{\xi_j}\circ {\partial_{\xi_i}}$.
Let us write transition function for  generators $\rd{\partial_{x_a}}$,  $\rd{\partial_{\xi_i}\circ {\partial_{\xi_j}}}$ of $\mathcal D_{\omega_2}$. 
We have
\begin{align*}
\partial_{x_b} &= \frac{\partial y_a}{\partial x_b} \partial_{y_a} + \frac{\partial \eta_i}{\partial x_b} \partial_{\eta_i} = \frac{\pa F_a}{\pa x_b}  \pa_{y_a} + \frac{\pa H_i^j}{\pa x_b}  \xi_j \pa_{\eta_i} + \ldots; \\
\pa_{\xi_b} &= \frac{\partial y_a}{\partial \xi_b} \pa_{y_a}  +  \frac{\partial \eta_i}{\partial \xi_b} \pa_{\eta_i} = G_a^{bi} \xi_i \pa_{y_a} +  H_i^b \pa_{\eta_i} + \ldots.
\end{align*}
Note that the functions $\frac{\pa F_a}{\pa x_b} =\frac{\pa F_a}{\pa x_b} (x(y)) $, $\frac{\pa H_i^j}{\pa x_b} =\frac{\pa H_i^j}{\pa x_b} (x(y))$, $G_a^{bi}= G_a^{bi}(x(y))$ and $H_i^b =H_i^b(x(y))$ are written in coordinates $\{y_b\}$ and  $x(y) $ is the inverse of the map $y_a= F_a(x)$. We denoted here by dots the terms that contain cooeficients from $\mcJ^2$. 

Further calculating $\pa_{\xi_{j_1}} \circ \pa_{\xi_{j_2}}$ and factirizing all term with coefficients from  $\mcJ$ we find 
the transition functions for $\mathcal D_{\omega_2}$ explicitly: 
\begin{equation}\label{eq Don Witten}
\begin{split}
\rd{\pa_{x_b}} &= \frac{\pa F_a}{\pa x_b}\rd{\pa_{y_a}} \\
\rd{\pa_{\xi_{b_1}} \circ \pa_{\xi_{b_2}}}  &= G_a^{b_2b_1}  \rd{\pa_{y_a}}  + H_{i_1}^{b_1} H_{i_2}^{b_2}\,\, \rd{\pa_{\eta_{i_1}} \circ \pa_{\eta_{i_2}}} .
\end{split}
\end{equation}
Here we used the equation $\rd{\pa_{\xi_b}} =   H_i^b \rd{\pa_{\eta_i}}$.
We see that the sheaf $\mathcal{D}er\mathcal F$ is a subsheaf of $\mathcal D_{\omega_2}$ and that the factor $\mathcal D_{\omega_2}/\mathcal{D}er\mathcal F$ is isomorphic to $\bigwedge^2\mathcal E$, compare with (\ref{eq DW exact sequence}). 

\begin{theorem}\cite[Section 2.1]{Witten Atiyah classes}\label{theor Don Witten theorem}
	The Atiyah class of the vector bundle $\mathcal D_{\omega_2}$ is equal to $\omega_2$. 
\end{theorem}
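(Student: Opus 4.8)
The plan is to read the statement as an identity between two cohomology classes living in the same group, and to verify it by comparing explicit \v{C}ech cocycles. The extension class of the short exact sequence (\ref{eq DW exact sequence}) lies in $\mathrm{Ext}^1(\bigwedge^2\mcE^*,\mathcal{D}er\mcF)\simeq H^1(\mcM_0,\bigwedge^2\mcE\otimes\mathcal{D}er\mcF)$, which is exactly the group containing $\omega_2$ (Appendix \ref{sec First obstruction class omega_2}). Thus the content of the theorem is to show that a representative of the extension class of $\mathcal D_{\omega_2}$ agrees with the representative $((\omega_2)_{ij})$ produced in (\ref{eq omega_2}).

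First I would recall the \v{C}ech description of an extension class. Choosing over each chart $\mcU_\lambda$ a local splitting $\sigma_\lambda$ of the surjection $\mathcal D_{\omega_2}\tto\bigwedge^2\mcE^*$, the differences $\sigma_\lambda-\sigma_\mu$ over the double overlaps take values in the subsheaf $\mathcal{D}er\mcF$ and assemble into a \v{C}ech $1$-cocycle whose class is the extension class. The cocycle condition is automatic from the cocycle property of the transition functions (\ref{eq Don Witten}), so no separate verification is needed. Next I would compute this cocycle directly from (\ref{eq Don Witten}): on $\mcU_\lambda$, with coordinates $(x_a,\xi_b)$, take $\sigma_\lambda$ sending the local frame element $\partial_{\xi_{b_1}}\wedge\partial_{\xi_{b_2}}$ of $\bigwedge^2\mcE^*$ to the operator symbol $\rd{\partial_{\xi_{b_1}}\circ\partial_{\xi_{b_2}}}$, and likewise $\sigma_\mu$ on $\mcU_\mu$ with coordinates $(y_a,\eta_b)$. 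By (\ref{eq Don Witten}),
\[
\sigma_\lambda(\partial_{\xi_{b_1}}\wedge\partial_{\xi_{b_2}})=\rd{\partial_{\xi_{b_1}}\circ\partial_{\xi_{b_2}}}=G_a^{b_2b_1}\,\rd{\partial_{y_a}}+H_{i_1}^{b_1}H_{i_2}^{b_2}\,\rd{\partial_{\eta_{i_1}}\circ\partial_{\eta_{i_2}}},
\]
whereas, since the class of $\partial_{\xi_{b_1}}\wedge\partial_{\xi_{b_2}}$ in the quotient transforms by $\bigwedge^2 H$, one has $\sigma_\mu(\partial_{\xi_{b_1}}\wedge\partial_{\xi_{b_2}})=H_{i_1}^{b_1}H_{i_2}^{b_2}\,\rd{\partial_{\eta_{i_1}}\circ\partial_{\eta_{i_2}}}$. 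Subtracting, the $\bigwedge^2\mcE^*$-parts cancel and
\[
(\sigma_\lambda-\sigma_\mu)(\partial_{\xi_{b_1}}\wedge\partial_{\xi_{b_2}})=G_a^{b_2b_1}\,\rd{\partial_{y_a}}\in\mathcal{D}er\mcF,
\]
so under the identification $\mathrm{Ext}^1(\bigwedge^2\mcE^*,\mathcal{D}er\mcF)\simeq H^1(\bigwedge^2\mcE\otimes\mathcal{D}er\mcF)$ the extension cocycle is represented by $\tfrac12 G_a^{b_1b_2}\,\xi_{b_1}\xi_{b_2}\otimes\partial_{x_a}$.

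Finally I would match this with (\ref{eq omega_2}). From the Appendix, $\omega_2=\jmath\circ\lambda_2(g_{ij})$ with $g_{ij}=\psi_{ij}\circ\phi_{ij}^{-1}$; reading off the even part of (\ref{eq starting smf m=2 3}), the degree-$2$ component of $\log g_{ij}$ is the vector field $\tfrac12 G_a^{ij}\xi_i\xi_j\,\partial_{x_a}$, and its image under $\jmath$ (restriction to $\mcF$, see (\ref{eq exact seq})) is $\tfrac12 G_a^{ij}\xi_i\xi_j\otimes\partial_{x_a}$. This is precisely the cocycle obtained above, so the two classes coincide. I expect the main obstacle to be bookkeeping rather than conceptual: one must carry the dualization $\mathrm{Ext}^1(\bigwedge^2\mcE^*,\mathcal{D}er\mcF)\simeq H^1(\bigwedge^2\mcE\otimes\mathcal{D}er\mcF)$ together with the antisymmetrization $b_1\leftrightarrow b_2$ in $G_a^{b_1b_2}$ and the factor $\tfrac12$ consistently, and must fix the sign conventions in the definitions of $\lambda_2$ and of the extension class (as well as the choice of frame in which the \v{C}ech cocycle is expressed) so that the two representatives agree on the nose rather than merely up to a nonzero scalar. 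Once the conventions are pinned down, the computation above closes the proof.
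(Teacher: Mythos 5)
Your proposal is correct and is essentially the argument the paper intends: the paper states Theorem \ref{theor Don Witten theorem} only as a citation to Donagi--Witten and does not write out a proof, but Appendix B derives the transition functions (\ref{eq Don Witten}) and Appendix A the representative (\ref{eq omega_2}) precisely so that the \v{C}ech comparison you perform (local splittings of (\ref{eq DW exact sequence}), difference $(\sigma_\lambda-\sigma_\mu)(\partial_{\xi_{b_1}}\wedge\partial_{\xi_{b_2}})=G_a^{b_2b_1}\rd{\partial_{y_a}}$, matched against $\jmath\circ\lambda_2(g_{ij})$) closes the argument, and the same cocycle-comparison technique is what the paper uses in its proof of Theorem \ref{theor omega_p = omega M}. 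The only steps worth making fully explicit are the conversion $\rd{\partial_{y_a}}=\tfrac{\partial x_c}{\partial y_a}\rd{\partial_{x_c}}$ (the weight-zero part of the first line of (\ref{eq Don Witten})) so that both representatives are expressed in the same local frame, and the antisymmetry $G_a^{ij}=-G_a^{ji}$, which is what makes $\tfrac12 G_a^{ij}\xi_i\xi_j\otimes\partial_{x_a}$ pair with $\partial_{\xi_{b_1}}\wedge\partial_{\xi_{b_2}}$ to reproduce $G_a^{b_2b_1}$ on the nose.
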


\bigskip

\noindent
E.~V.: Departamento de Matem{\'a}tica, Instituto de Ci{\^e}ncias Exatas,
Universidade Federal de Minas Gerais,
Av. Ant{\^o}nio Carlos, 6627, CEP: 31270-901, Belo Horizonte,
Minas Gerais, BRAZIL, Institute of Mathematics, University of Cologne, Weyertal 86-90, 50931 Cologne, GERMANY, and Laboratory of Theoretical and Mathematical Physics, Tomsk State University,
Tomsk 634050, RUSSIA.

\noindent Email: {\tt VishnyakovaE\symbol{64}googlemail.com}

\end{document}